\newtheorem{thm}{Theorem}[section]
\newtheorem{lemma}[thm]{Lemma}
\newtheorem{corollary}[thm]{Corollary}
\newtheorem{cor}[thm]{Corollary}
\newtheorem{theorem}[thm]{Theorem}
\newtheorem{prop}[thm]{Proposition}
\newtheorem{proposition}[thm]{Proposition}
\theoremstyle{definition}
\newtheorem{definition}[thm]{Definition}
\newcommand{\titre}[1]{\noindent \textbf{#1}}
\newcommand{\ite}{\noindent $\bullet~$}
\newcommand{\tth}{\textrm{th}}
\newcommand{\bu}{\bullet} 
\newcommand{\st}{\star}
\newcommand{\al}{\alpha}
\newcommand{\be}{\beta}
\newcommand{\la}{\lambda}
\newcommand{\si}{\sigma} 
\newcommand{\gam}{\gamma} 
\newcommand{\Om}{\Omega}
 \newcommand{\tbeta}{\tilde \beta} 
\newcommand{\tbe}{\tilde \beta} 
\newcommand{\ttau}{\tilde \tau} 
\newcommand{\tpi}{\tilde \pi} 
\newcommand{\tsi}{\tilde \sigma} 
\newcommand{\tsip}{\raisebox{0pt}{$\tilde{\si}'$}} 
\newcommand{\ttaup}{\raisebox{0pt}{$\tilde{\tau}'$}} 
\newcommand{\tpsi}{\tilde \psi} 
\newcommand{\tphi}{\tilde \phi}
\newcommand{\tphip}{\raisebox{0pt}{$\tilde{\phi}'$}} 
\newcommand{\tvarphi}{\tilde \varphi}
\newcommand{\tvarphip}{\raisebox{0pt}{$\tilde{\varphi}'$}} 
\newcommand{\tA}{\tilde A} 
\newcommand{\tB}{\tilde B}
\newcommand{\tI}{\tilde I} 
\newcommand{\tO}{\tilde O} 
\newcommand{\tM}{\tilde M}
\newcommand{\ba}{\bar{a}}
\newcommand{\bb}{\bar{b}}
\newcommand{\lS}{\prec_S}
\newcommand{\lSp}{\prec_{S'}}
\newcommand{\bS}{\bar{S}}
\newcommand{\MS}{M_{|S}}
\newcommand{\MSs}{M^*_{|\bS}}
\newcommand{\Cat}{\mathrm{Cat}}
\author[O. Bernardi]{Olivier Bernardi $^1$}
\thanks{$^1$ Massachussets Institute of Technology,  Department of Mathematics,  Cambridge MA 02139, USA. Partial support from the French grant \emph{ANR A3} and the European grant \emph{ERC Explore maps}.  {\tt bernardi@math.mit.edu}}
\author[G. Chapuy]{Guillaume Chapuy $^2$}
\thanks{$^2$Department of Mathematics, Simon Fraser University, Burnaby BC V5A 1S6, Canada. 
Partial support from the a CNRS/PIMS postdoctoral fellowship, and the European grant \emph{ERC Explore maps}. 
{\tt gchapuy@sfu.ca}
}
\title[A bijection for covered maps]
{A bijection for covered maps, or a shortcut between Harer-Zagier's and Jackson's formulas.}
\date{\today}
\begin{document}

\maketitle

\begin{abstract} We consider maps on orientable surfaces. A map is called \emph{unicellular} if it has a single face. A \emph{covered map} is a map (of genus $g$) with a marked unicellular spanning submap (which can  have any genus in $\{0,1,\ldots,g\}$).  Our main result is a bijection between covered maps with $n$ edges and genus $g$ and pairs made of a plane tree with $n$ edges and a unicellular bipartite map of genus $g$ with $n+1$ edges. In the planar case, covered maps are maps with a marked spanning tree  and our bijection specializes into a construction obtained by the first author in~\cite{OB:boisees}.

Covered maps can also be seen as \emph{shuffles} of two unicellular maps (one representing the unicellular submap, the other representing the dual unicellular submap). Thus, our bijection gives a correspondence between shuffles of unicellular maps, and pairs made of a plane tree and a unicellular bipartite map. In terms of counting, this establishes the equivalence between a formula due to Harer and Zagier for general unicellular maps, and a formula due to Jackson for bipartite unicellular maps.

We also show that the bijection of Bouttier, Di Francesco and Guitter~\cite{BDFG:mobiles} (which  generalizes a previous bijection by Schaeffer~\cite{Schaeffer:these}) between bipartite maps and so-called well-labelled mobiles can be obtained as a special case of our bijection.
\end{abstract}

\section{Introduction.} \label{sec:intro}
We consider maps on orientable surfaces of arbitrary genus. A map is called \emph{unicellular} if it has a single face. For instance, the unicellular maps of genus 0 are the plane trees. A \emph{covered map} is a map together with a marked unicellular spanning submap. A map of genus $g$ can have spanning submaps of any genus in $\{0\ldots,g\}$. In particular, a \emph{tree-rooted map} (map with a marked spanning tree) is a covered map since the spanning trees are the unicellular spanning submaps of genus~0. A covered map of genus 2 with a unicellular spanning submap of genus~1 is represented in Figure~\ref{fig:Covered-map+image}(a).\\ 

Our main result is a bijection (denoted by $\Psi$) between covered maps of genus $g$ with $n$ edges, and pairs made of a plane tree with $n$ edges and a bipartite unicellular map of genus $g$ with $n+1$ edges. If the covered map has $v$ vertices and $f$ faces, then the bipartite unicellular map has $v$ white vertices and $f$ black vertices. The bijection $\Psi$ is represented in Figure~\ref{fig:Covered-map+image}. In the planar case $g=0$, the bijection $\Psi$ coincides with a construction by the first author~\cite{OB:boisees} between planar tree-rooted maps with $n$ edges and pairs of plane trees with $n$ and $n+1$ edges respectively\footnote{In \cite{OB:boisees}, the tree with $n+1$ edges was actually described as a non-crossing partition.}. As explained below, the bijection $\Psi$ also extends several other bijections. 

\begin{figure}[ht!]\begin{center} \input{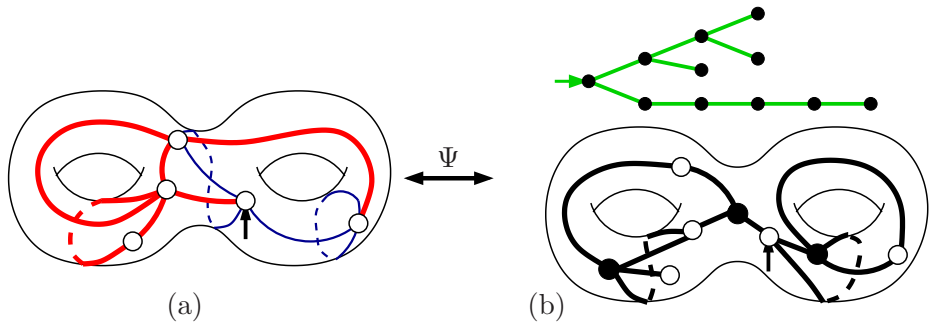}\caption{(a) A covered map of genus 2 (the unicellular submap of genus 1 is drawn in thick lines). (b) The image of the covered map by the bijection $\Psi$ is made of a tree and a bipartite unicellular map of genus 2 (the \emph{mobile} associated to the covered map).}\label{fig:Covered-map+image} \end{center}\end{figure}

Before discussing the bijection $\Psi$ further, let us give another equivalent way of defining covered maps. We start with the planar case which is simpler. Let $M$ be a planar map and let $M^*$ be its dual. Then, for any spanning tree $T$ of $M$, the dual of the edges not in $T$ form a spanning tree of $M^*$. In other words, the \emph{dual of a planar tree-rooted map is a planar tree-rooted map}.  Pushing this observation further, Mullin showed in~\cite{Mullin:tree-rooted-maps} that a tree-rooted map can be encoded by a \emph{shuffle} of two trees (one representing the spanning tree on primal map $M$, the other representing the spanning tree on the dual map $M^*$), or more precisely as a shuffle of two parenthesis systems encoding these trees. Covered maps enjoy a similar property:  the \emph{dual of a covered map is a covered map}. Using this observation, it is not hard to see that covered maps can be encoded by \emph{shuffles} of two unicellular maps (see Section~\ref{sec:shuffles}). 

We emphasize that our bijection $\Psi$ is \emph{not} the encoding of covered maps as shuffles of two unicellular maps: the image by $\Psi$ is a pair of unicellular maps of a fixed size, and not a shuffle.
As a matter of fact, comparing the enumerative formulas given by the bijection $\Psi$ with the formulas given by the shuffle approach yields the equivalence between formulas by Harer and Zagier~\cite{Harer-Zagier}, and by Jackson~\cite{Jackson:kpartite-unicellular-maps}. As a warm up, let us consider the total number $C(n)$ of covered maps with $n$ edges (and arbitrary genus). 
The bijection $\Psi$ implies  
\begin{eqnarray}\label{eq:pair-arbitrary-genus}
C(n)=\Cat(n)B(n+1)=\frac{(2n)!}{n!},
\end{eqnarray}
where $\Cat(n)=\frac{(2n)!}{n!(n+1)!}$ is the Catalan number counting rooted plane trees with $n$ edges, and $B(n+1)=(n+1)!$ is the total number of bipartite unicellular maps with $n+1$ edges. On the other hand, the shuffle approach implies
\begin{eqnarray}\label{eq:shuffle-arbitrary-genus}
C(n)=\sum_{k=0}^n{2n\choose 2k} A(k) A(n-k),
\end{eqnarray}
where $A(k)=\frac{(2k)!}{2^kk!}$ is the total number of unicellular maps with $k$ edges (and the term ${2n\choose 2k}$ accounts for the shuffling). Here, the relation between \eqref{eq:pair-arbitrary-genus} and \eqref{eq:shuffle-arbitrary-genus} is merely an application of the Chu-Vandermonde identity, but things get more interesting when one considers refinements of these equations. First, by the bijection~$\Psi$, the number of covered maps of genus $g$ with $n$ edges is
\begin{eqnarray}\label{eq:boiseesintro}
C_g(n) = \Cat(n) B_g(n+1),
\end{eqnarray}
where $B_g(n+1)$ is the number of bipartite unicellular maps of genus $g$ with $n+1$ edges. A further refinement can be obtained by taking into account the numbers $p$ and $q$ of vertices and faces of the covered map. We show in Section \ref{sec:enumeration}, that comparing the formula given by the bijection $\Psi$ with the formula given  by the shuffle approach yields
\begin{eqnarray}
\label{eq:bipartitefrommono-intro}
B^{p,q}(n+1) = \sum_{k=0}^n \frac{n!(n+1)!}{(2k)!(2n-2k)!} A^p(k)A^q(n-k),
\end{eqnarray}
where $B^{p,q}(n+1)$ is the number of unicellular bipartite maps with $n+1$ edges, $p$ white vertices and $q$ black vertices, and $A^p(k)$ is the number of unicellular maps with $n$ edges and $p$ vertices. Equation \eqref{eq:bipartitefrommono-intro} actually establishes the  equivalence between the formula of Harer and Zagier~\cite{Harer-Zagier} for unicellular maps: 
\begin{eqnarray}
\label{eq:HarerZagier}
\sum_{p\geq 1} A^p(n) y^p = \frac{(2n)!}{2^nn!} \sum_{i\geq 1}2^{i-1} {n \choose i-1} {y \choose i},
\end{eqnarray}
and the formula of Jackson~\cite{Jackson:kpartite-unicellular-maps} for bipartite unicellular maps:
\begin{eqnarray}
\label{eq:JacksonAdrianov}
\sum_{p,q\geq 1} B^{p,q}(n+1) y^p z^q= (n+1)! \sum_{i,j\geq 1} {n \choose i-1, j-1} {y \choose i} {z \choose j}.
\end{eqnarray}
The original proof of \eqref{eq:HarerZagier} involves a matrix integral argument; combinatorial proofs are given in \cite{Lass:Harer-Zagier,Goulden:Harer-Zagier,OB:Harer-Zagier-non-orientable,Chapuy:unicellular}. The original proof of \eqref{eq:JacksonAdrianov} (as well as another related formula by Adrianov~\cite{Adrianov:bicolored-unicellular}) is based on the representation theory of the symmetric group; a combinatorial proof is given in  \cite{Schaeffer:bij-Jackson-formula}.

Let us mention a few other enumerative corollaries of the bijection $\Psi$.  
First, plugging the identity\footnote{This identity is originally due to Lehman and Walsh~\cite[Eq. (14)]{Walsh:counting-maps-1} and can easily be proved bijectively by using the results from~\cite{Chapuy:unicellular}.} $A^1(k)=A(k)/(k+1)$  in the case $q=1$ of
Equation~\eqref{eq:bipartitefrommono-intro} shows that the number $B^{p}(n)$ of bipartite  unicellular
maps with $n$ edges and $p$ white vertices satisfies
$$B^{p}(n)=\frac{n(n+1)}{2}B^{p,1}(n).$$
This formula is originally due to Zagier~\cite{Zagier:cycles}, and has been given a bijective proof (different
from ours) by Feray and Vassilieva~\cite{Feray-Vassilieva}.
We now turn to formulas concerning the number $T_g(n)$ of tree-rooted maps of genus $g$ with $n$ edges. In the planar case, tree-rooted maps are the same as covered maps. Hence, \eqref{eq:boiseesintro} gives
\begin{eqnarray}\label{eq:Mullin}
T_0(n)= \Cat(n) \Cat(n+1),
\end{eqnarray}
This formula was originally proved by Mullin~\cite{Mullin:tree-rooted-maps} (using the shuffle approach) who asked for a direct bijective proof; this was the original motivation for the planar specialization of $\Psi$ described in~\cite{OB:boisees} 
(and for a related recursive bijection due to Cori, Dulucq and Viennot~\cite{Dulucq:shuffle-parenthesis-system}). 
In the case of genus $g=1$, a duality argument shows that exactly half of the covered maps are tree-rooted maps, so that \eqref{eq:boiseesintro} gives
\begin{eqnarray}\label{eq:Lehman}
T_1(n)= \frac{1}{2}\Cat(n) B_1(n+1)=\frac{(2n)!(2n-1)!}{12(n+1)!n!(n-1)!(n-2)!},
\end{eqnarray}
This formula was originally proved by Lehman and Walsh (using the shuffle approach), and had no direct bijective proof so far.\\


We now explain the relation between the bijection $\Psi$ and some known bijections. In \cite{Bernardi-Fusy:dangulations,Bernardi-Fusy:Bijection-girth}, two ``master bijections'' for planar maps are defined, and then specialized in various ways so as to unify and extend several known bijections (roughly speaking, by specializing the master bijections appropriately, one can obtain a bijection for any class of planar maps defined by a girth condition and a face-degree condition). The master bijections are in fact ``variants'' of the planar version of the bijection $\Psi$. 
Here we shall prove that the bijection $\Psi$ generalizes the bijection obtained for bipartite planar maps by Bouttier, Di Francesco and Guitter~\cite[Section 2]{BDFG:mobiles}  (however, we do not recover the most general version of their bijection~\cite[Section 3-4]{BDFG:mobiles}), as well as its generalization to higher genus surfaces by Chapuy, Marcus and Schaeffer~\cite{Chapuy:nb-maps-orientable, Chapuy:constellations}. These bijections (which themselves generalize a previous bijection by Schaeffer~\cite{Schaeffer:these}) are of fundamental importance for studying the metric properties of random maps~\cite{Chassaing-Schaeffer:ISE,BDFG:distances,Bouttier:3point-function,Bouttier:Godesic,Miermont:Geodesiques} and for defining and analyzing their continuous limit, the \emph{Brownian map}~\cite{Marckert:limit-quadrangulations,LeGall:limitmaps,LeGallPaulin:limitmaps2,LeGall:Geodesiques}.\\

The paper is organized as follows. In Section~\ref{sec:Maps}, we recall some definitions about maps. In Section~\ref{sec:shuffles}, we show that covered maps can be encoded by shuffles of unicellular maps. In Section~\ref{sec:bijection}, we define the bijection $\Psi$ between covered maps with $n$ edges and pairs made of a plane tree with $n$ edges and a bipartite unicellular map with $n+1$ edges. In Section~\ref{sec:enumeration}, we explore the enumerative implications of the bijection $\Psi$. In Section~\ref{sec:proofs}, we prove the bijectivity of $\Psi$. In Section~\ref{sec:alternative-folding}, we give three equivalent ways of describing the image of $\Psi$ (the pairs made of a plane tree and a bipartite unicellular map) and use one  of these descriptions in order to show that the bijections for bipartite maps described in~\cite{BDFG:mobiles,Chapuy:nb-maps-orientable} are specializations of $\Psi$.  Lastly, in Section~\ref{sec:duality}, we study the properties of the bijection $\Psi$ with respect to duality.\\


\section{Definitions} \label{sec:Maps}
\titre{Maps.} Maps can either be defined topologically (as graphs embedded in surfaces) or combinatorially (in terms of permutations). We shall prove our results using the combinatorial definition, but resort to the topological interpretation in order to convey intuitions.\\

We start with the topological definition of maps. Here, \emph{surfaces} are $2$-dimensional, oriented, compact and without boundaries. A \emph{map} is a connected graph embedded in surface, considered up to orientation preserving homeomorphism. By \emph{embedded}, one means drawn on the surface in such a way that the edges do not intersect and the \emph{faces} (connected components of the complement of the graph) are simply connected. Loops and multiple edges are allowed. The \emph{genus} of the map is the genus of the underlying surface and its \emph{size} is its number of edges. A \emph{planar map} is a map of genus $0$. A map is \emph{unicellular} if it has a single face. For instance, the planar unicellular maps are the \emph{plane trees}. A map is \emph{bipartite} if vertices can be colored in black and white in such a way that every edge join a white vertex to a black vertex. We denote by $g(M)$ the genus of a map $M$ and by $v(M)$, $f(M)$, $e(M)$ respectively its number of vertices, faces and edges. These quantities are related by the \emph{Euler formula}:
\begin{eqnarray*}
\label{eq:Euler}
v(M)-e(M)+f(M)=2-2g(M).
\end{eqnarray*}
By removing the midpoint of an edge, one obtains two \emph{half-edges}. Two consecutive half-edges around a vertex define a \emph{corner}. A map is \emph{rooted} if one half-edge is distinguished as the \emph{root}. The vertex incident to the root is called \emph{root-vertex}. In figures, the rooting will be indicated by an arrow pointing into the \emph{root-corner}, that is, the corner following the root in clockwise order around the root-vertex. For instance, the root of the map in Figure~\ref{fig:Def-map} is the half-edge $a_1$. \\

\begin{figure}[ht!]\begin{center} \input{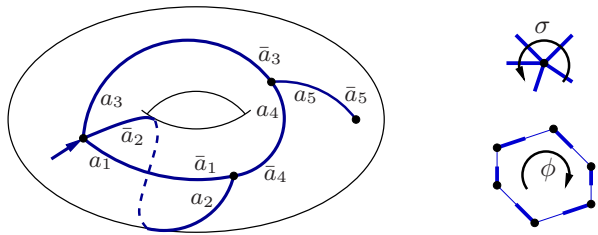}\caption{A unicellular map $M=(H,\si,\al)$ of genus 1. The set of half-edges is $H=\{a_1,\ba_1,\ldots,a_5,\ba_5\}$, the edge-permutation is $\al=(a_1,\ba_1)\cdots(a_5,\ba_5)$, the vertex-permutation  is  $\si=(a_1,\ba_2,a_3)(\ba_1,a_2,\ba_4)(\ba_3,a_4,a_5)(\ba_5)$, and the face-permutation is $\phi=\si\alpha=(a_1,a_2,a_3,a_4,\ba_1,\ba_2,\ba_4,a_5,\ba_5,\ba_3)$.}\label{fig:Def-map} \end{center}\end{figure}

Maps can also be defined in terms of permutations acting on half-edges. 
To obtain this equivalence, observe first that the embedding of a graph in a surface defines a cyclic order (the counterclockwise order) of the half-edges around each vertex. This gives in fact a one-to-one correspondence between maps and connected graphs together
with a cyclic order of the half-edges around each vertex
(see e.g.~\cite{Mohar-Thomassen}). Equivalently, a \emph{map} can be defined as a triple
$M=(H,\si,\alpha)$ where $H$ is a finite set whose elements are called the
\emph{half-edges}, $\alpha$ is an involution of $H$ without fixed point, and
$\si$ is a permutation of $H$ such that the group generated by $\si$ and
$\alpha$ acts transitively on $H$ (here we follow the notations in~\cite{Cori-Machi}). This must be understood as follows: each cycle of $\si$ describes the counterclockwise order of the half-edges around one vertex of the map, and each cycle of $\alpha$ describes an edge, that is,  a pair of two half-edges; see Figure~\ref{fig:Def-map} for an example. The transitivity assumption simply translates the fact that the graph is connected.  \\

For a map $M=(H,\si,\alpha)$, the permutation $\si$ is called \emph{vertex-permutation}, the permutation $\alpha$  is called  \emph{edge-permutation} and the permutation $\phi=\si\alpha$ is called \emph{face-permutation}. The cycles of $\si$, $\al$, $\phi$ are called \emph{vertices}, \emph{edges} and \emph{faces}. Observe that the cycles of $\phi$ are indeed in bijection with the faces of the map in its topological interpretation. Hence, the genus of $M$ can be deduced from the number of cycles of $\si$, $\al$ and $\phi$ by the Euler relation. We say that a half-edge is \emph{incident} to a vertex or a face if this edge belongs to the corresponding cycle.  Again, a map is \emph{rooted} if one of the half-edges is distinguished as the \emph{root}; the incident vertex and face are called \emph{root-vertex} and \emph{root-face}.\\

The correspondence between topological and combinatorial maps is one-to-one if combinatorial maps are considered up to \emph{isomorphism} (or,  \emph{relabelling}). That is, two maps $(H,\si,\alpha)$ and $(H',\si',\alpha')$ are considered the same if there exists a bijection
$\la: H \rightarrow H'$ such that $\si'=\la\si\la^{-1}$ and $\alpha'=\la\alpha\la^{-1}$ (for rooted maps, we ask furthermore that $\la(r)=r'$).  In this article \emph{all maps will be rooted, and considered up to isomorphism}.

We call \emph{pseudo map} a triple $M=(H,\si,\alpha)$ such that $\alpha$ is a fixed-point free involution, but where the  transitivity assumption (i.e. connectivity assumption) is not required. This can be seen as a union of maps and we still call $\phi=\si\al$ the \emph{face-permutation}, as its cycles are indeed in correspondence with the faces of the union of maps.  Lastly, we consider the case where the set of half-edges $H$ is empty as a special case of rooted unicellular map (corresponding to the planar map with one vertex and no edge) called \emph{empty map}.\\

\titre{Submaps, covered maps and motion functions.}
For a permutation $\pi$ on a set $H$, we call \emph{restriction of $\pi$} to a set $S\subseteq H$ and denote by $\pi_{|S}$ the permutation of $S$ whose cycles are obtained from the cycles of $\pi$ by erasing the elements not in $S$. Observe that $(\pi^{-1})_{|S}=(\pi_{|S})^{-1}$ so that we shall not use parenthesis anymore in these notations. It is sometime convenient to consider the restriction $\pi_{|S}$ as a permutation on the whole set $H$ acting as the identity on $H\setminus S$; we shall mention this abuse of notations whenever necessary.\\ 

A spanned map is a map with a marked subset of edges. 
In terms of permutations, a \emph{spanned map} is a pair $(M,S)$, where $S$ is a subset of half-edges stable by the edge-permutation $\al$. The  \emph{submap defined by $S$}, denoted $M_{|S}$, is the pseudo map $(S,\si_{|S},\alpha_{|S})$, where $\si$ is the vertex-permutation of $M$. We underline that the face-permutation $\phi_S=\si_{|S}\alpha_{|S}$ of the pseudo-map $M_{|S}$ is not equal to  $(\si\alpha)_{|S}$. Observe also that the genus of $\MS$ can be less than the genus of $M$. For example, Figure~\ref{fig:Covered-map+image}(a) represents a submap of genus 1 of a map of genus 2.
A submap $M_{|S}$ is \emph{connecting} if it is a map containing every vertex of $M$, that is, $S$ contains a half-edge in every vertex of $M$ (except if $M$ has a single vertex, where we authorize $S$ to be empty) and $\si_{|S}$, $\alpha_{|S}$ act transitively on $S$. The submap represented in Figure~\ref{fig:motion} (right) is a map but is not connecting. A \emph{covered map} is a spanned map such that the submap $M_{|S}$ is a connecting unicellular map. A \emph{tree-rooted map} is a spanned map such that the submap $M_{|S}$ is a spanning tree, that is, a connecting plane tree.\\ 

The \emph{motion function} of the spanned map $(M,S)$ is the mapping $\theta$ defined on $H$ by $\theta(h)=\phi(h)\equiv\si\alpha(h)$ if $h$ is in $S$ and $\theta(h)=\si(h)$ otherwise. Observe that the  stability of $S$ by $\alpha$ implies that the motion function $\theta$ is a permutation of $H$ since its inverse is given by $\theta^{-1}(h)=\alpha\si^{-1}(h)$ if $\si^{-1}(h)$ is in $S$ and $\theta^{-1}(h)=\si^{-1}(h)$ otherwise. Observe also that, given the map $M$,  the set $S$ can be recovered from the motion function $\theta$. Topologically, the motion function is the permutation describing the \emph{tour} of the faces of the connected components of the submap $\MS$ in counterclockwise direction: we follow the border of the edges of the submap $\MS$ and cross the edges not in $\MS$. See Figure~\ref{fig:motion} for an example.\\

\begin{figure}[ht!]\begin{center} \input{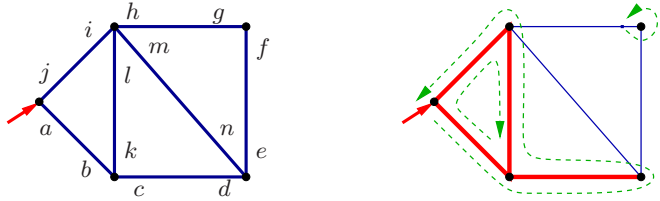}\caption{The submap $\MS$ defined by $S=\{a,b,c,d,i,j,k,l\}$, and its motion function $\theta=(a,c,e,n,d,k,m,h,i)(b,j,l)(f,g)$. The cycles of $\theta$ are represented in dashed lines.}\label{fig:motion} \end{center}\end{figure}

\titre{Orientations.}
An \emph{orientation} of a map $M=(H,\si,\alpha)$ is a partition $H=I\uplus O$ such that the involution $\alpha$ maps the set $I$ of \emph{ingoing} half-edges to the set $O$ of \emph{outgoing} half-edges. The pair $(M,(I,O))$ is an \emph{oriented map}. 
A \emph{directed path} is a sequence $h_1,h_2,\ldots,h_{k}$ of distinct ingoing half-edges such that $h_{i}$, $\al(h_{i+1})$ are incident to the same vertex (are in the same cycles of $\si$) for  $i=1\ldots k\!-\!1$. A directed cycle is a directed path $h_1,\ldots,h_{k}$ such that $h_{k}$ and $\al(h_1)$ are incident to the same vertex. The half-edge $h_k$ is called the \emph{extremity} of the directed path. An orientation is \emph{root-connected} if any ingoing half-edge $h$ is the extremity of a directed path $h_1,\ldots,h_{k}=h$ such that $\al(h_1)$ is incident to the root-vertex of~$M$.\\

\titre{Duality}
The \emph{dual map} of a map $M=(H,\si,\alpha)$ is the map $M^*=(H,\phi,\alpha)$ where $\phi=\si\alpha$ is the face-permutation of $M$. The root of the dual map $M^*$ is equal to the root of~$M$. Observe that the genus of a map and of its dual are equal (by Euler relation) and that $M^{**}=M$. Topologically, the dual map $M^*$ is obtained by the following two steps process: see Figure~\ref{fig:dual+covered}.
\begin{enumerate}
\item In each face $f$ of $M$, draw a vertex $v_f$ of $M^*$. For each edge $e$ of $M$ separating faces $f$ and $f'$ (which can be equal), draw the \emph{dual edge} $e^*$ of $M^*$ going from $v_f$ to $v_{f'}$ across $e$.
\item Flip the drawing of $M^*$, that is, inverse the orientation of the surface.\\
\end{enumerate}

We now define duals of spanned maps and oriented maps. Given a subset $S\subseteq H$, we denote $\bS=H\setminus S$. The \emph{dual of a spanned map} $(M,S)$ is the spanned map $(M^*,\bS)$; see Figure~\ref{fig:dual+covered}. We also say that $M_{|S}$ and $M^*_{|\bS}$ are \emph{dual submaps}. Observe that the motion functions of a spanned map $(M,S)$ and of its dual $(M^*,\bS)$ are equal.
The \emph{dual of the oriented map}  $(M,(I,O))$ is $(M^*,(I,O))$. Graphically, this orientation is obtained by applying the following rule at step 1: the dual-edge $e^*$ of an edge  $e\in M$ is oriented from the left of $e$ to the right of $e$;  see Figure~\ref{fig:dual+orientation}. Observe that duality 
is involutive on maps, spanned maps and oriented maps.

\begin{figure}[ht!]\begin{center} \input{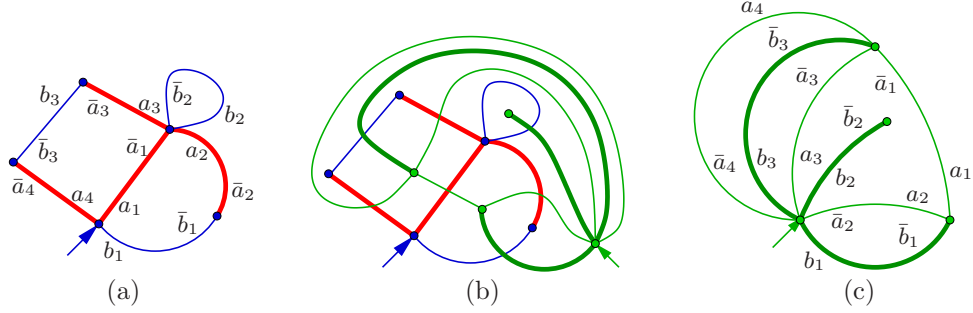}\caption{(a) A spanned map (the submap is indicated by thick lines).  (b) Topological construction of the dual. (c) The dual covered map.}\label{fig:dual+covered} \end{center}\end{figure}


\section{Covered maps as shuffles of unicellular maps.}\label{sec:shuffles}
In this Section, we establish some preliminary results about covered maps. In particular we prove that covered maps are stable by duality and explicit their encoding as shuffles of two unicellular maps. Our first result should come as no surprise: it simply states that a spanned map $(M,S)$ is a covered map if and only if turning around the submap $\MS$ (that is following the border of its edges) starting from the root allows one to visit every half-edge of $M$.

\begin{prop}\label{prop:covered=cyclic}
Let $(M,S)$ be a spanned map, and let $\si$, $\al$ and $\phi=\si\al$ be the vertex-, edge-, and face-permutations of $M$. The motion function $\theta$ satisfies $\theta_{|S}=\si_{|S}\alpha_{|S}$ and $\theta_{|\bS}=\phi_{|\bS}\alpha_{|\bS}$. That is, the restriction $\theta_{|S}$ is the face permutation of the pseudo map $\MS$, while the restriction  $\theta_{|\bS}$ is the face-permutation of the dual pseudo map $M^*_{|\bS}$. 

In particular a spanned map is a covered map if and only if its motion function is a cyclic permutation. 
\end{prop}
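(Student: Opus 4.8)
The plan is to establish the two displayed identities first, and then deduce the ``in particular'' from the first of them together with one elementary remark about unicellular pseudo-maps. No Euler-formula or genus computation is needed; the topological picture of $\theta$ as the ``tour around $\MS$'' only serves as a sanity check, and everything reduces to elementary bookkeeping on permutations.

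\emph{The two identities.} For $\theta_{|S}=\si_{|S}\al_{|S}$ I would argue directly from the definition of restriction. Fix $h\in S$; since $S$ is $\al$-stable, $\al(h)\in S$ and $\al_{|S}(h)=\al(h)$. Iterating $\theta$ from $h$, the first step gives $\theta(h)=\phi(h)=\si(\al(h))$, and every subsequent step, as long as the orbit stays in $\bS$, applies $\si$; hence the least $i\ge1$ with $\theta^{i}(h)\in S$ equals the least $j\ge1$ with $\si^{j}(\al(h))\in S$, so $\theta_{|S}(h)=\si^{j}(\al(h))=\si_{|S}(\al(h))=\si_{|S}\al_{|S}(h)$. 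The one point requiring care is the short induction tracking how many consecutive steps the $\theta$-orbit of $h$ spends in $\bS$. For $\theta_{|\bS}=\phi_{|\bS}\al_{|\bS}$ I would invoke duality: the dual spanned map $(M^{*},\bS)$ has $M^{*}=(H,\phi,\al)$, whose vertex-permutation is $\phi$ and whose face-permutation is $\phi\al=\si$, so its motion function is ``$\si$ on $\bS$, $\phi$ on $S$'', that is, exactly $\theta$ (this is the remark recorded in Section~\ref{sec:Maps}). Since $\bS$ is again $\al$-stable, applying the identity just proven to $(M^{*},\bS)$ gives $\theta_{|\bS}=\phi_{|\bS}\al_{|\bS}$; alternatively, the same direct computation works, starting from $\phi(\al(h))=\si(h)$.

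\emph{The characterization of covered maps.} I would first record the elementary fact that a unicellular pseudo-map on a nonempty set of half-edges is automatically connected: two connected components would each carry at least one face, so there would be at least two faces. Consequently, for $S\ne\emptyset$, ``$\MS$ is a connecting unicellular map'' amounts to ``$\MS$ is unicellular and contains every vertex of $M$''. Now, if $(M,S)$ is covered and $S\ne\emptyset$, then by the first identity $\theta_{|S}$ is a single cycle, so all of $S$ lies in one cycle $C_{0}$ of $\theta$; any further cycle $C_{1}$ of $\theta$ satisfies $C_{1}\subseteq\bS$, where $\theta=\si$, hence $C_{1}$ is $\si$-stable and $\si$ acts on it as a single cycle, i.e.\ $C_{1}$ is one cycle of $\si$ --- a vertex of $M$ with no half-edge in $S$, contradicting that $\MS$ contains every vertex; so $\theta$ is cyclic. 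Conversely, if $\theta$ is a single cycle and $S\ne\emptyset$, its restriction to the nonempty set $S$ is a single cycle, so $\MS$ is unicellular; and if some vertex $v$ of $M$ had no half-edge in $S$, the $\si$-cycle $v\subseteq\bS$ would be a cycle of $\theta$ distinct from the one meeting $S$, contradicting cyclicity --- so $\MS$ contains every vertex and $(M,S)$ is covered. Finally, $S=\emptyset$ is permitted only when $M$ has a single vertex, and then $\theta=\si$ is cyclic and $\MS$ is the empty map (a connecting unicellular submap), so the equivalence holds in that case too.

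\emph{Main obstacle.} I do not expect a genuine difficulty. The two slightly fiddly points are the induction in the proof of $\theta_{|S}=\si_{|S}\al_{|S}$ and the consistent treatment of the degenerate empty-submap convention; once the first identity is in place, the ``in particular'' is a short argument.
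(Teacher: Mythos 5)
Your proof is correct and follows essentially the same route as the paper: the identity $\theta_{|S}=\si_{|S}\al_{|S}$ is obtained by tracking the $\theta$-orbit of $h$ until it re-enters $S$ (the paper packages this same computation as a standalone statement about arbitrary permutations, Lemma~\ref{lem:cori}), the second identity follows from the equality of the motion functions of a spanned map and its dual, and the characterization of covered maps follows by identifying the cycles of $\theta$ disjoint from $S$ with vertices of $M$ missed by $S$. If anything, your treatment of the ``in particular'' is slightly more complete than the paper's, which in the converse direction only checks that $\MS$ is unicellular and leaves the connecting condition and the $S=\emptyset$ convention implicit.
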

Proposition~\ref{prop:covered=cyclic} is a consequence of the following lemma:
\begin{lemma}\label{lem:cori}
Let $\sigma$ and $\phi$ be two permutations on the set $\{1,2,\dots,n\}$, let $S$ be a subset of $\{1,2,\dots,n\}$, and let $\theta$ be the mapping defined by:
$$
\theta(a)=\sigma(a) \mbox{ if } a\in S \mbox{ and } \theta(a)=\phi(a) \mbox{ if } a\not\in S.
$$
Then $\theta$ is a permutation if and only if $S$ is stable by $\phi^{-1}\sigma$. Moreover in that case we have:
$$
\theta_{|S}=\phi_{|S}(\phi^{-1}\sigma)_{|S}.
$$
\end{lemma}
\begin{proof}[Proof of Lemma~\ref{lem:cori}]
First, if $S$ is stable by $\phi^{-1}\sigma$ then the inverse of $\theta$ is given by $\theta^{-1}(a)=\sigma^{-1}(a)$ if $\sigma^{-1}(S)\in S$ and $\theta^{-1}(a)=\phi^{-1}(a)$ is $\sigma^{-1}(S)\not \in S$ (note that $\sigma^{-1}(S)\not \in S$ is equivalent to $\phi^{-1}(a)\not\in S$ since $S$ is stable by $\phi^{-1}\sigma$).
Conversely, if there exists $s\in S$ such that $\phi^{-1}\sigma(s)\not \in S$, then $\theta(s)=\sigma(s)=\theta(\phi^{-1}\sigma(s))$ and $\theta$ is not a permutation. This proves the first claim.

For the second claim, let $s \in S$ and $r=\theta_{|S}(s)$. By definition of the restriction we have $r\in S$, and there exist $h_1, h_2, \dots, h_k \not\in S$ such that $\theta(s)=h_1$, $\theta(h_i)=h_{i+1}$ for $i<k$, and $\theta(h_k)=r$. 
Moreover by definition of $\theta$ we have $\theta(s)=\sigma(s)$ and $\theta(h_i)=\phi(h_i)$ for $i\leq k$.
Now, since $S$ is stable by $\phi^{-1}\sigma$, we have $(\phi^{-1}\sigma)_{|S}(s)=\phi^{-1}\sigma(s)=\phi^{-1}(h_1)$, which implies that 
$\phi_{|S}(\phi^{-1}\sigma)_{|S}(s)=\phi_{|S}(h_1)=r$ by definition of the restriction.
\end{proof}



\begin{proof}[Proof of Proposition~\ref{prop:covered=cyclic}]  
The fact $\theta_{|S}=\sigma_{|S}\alpha_{|S}$ comes from Lemma~\ref{lem:cori}, and the relation $\bar\theta_{|\bar S}=\phi_{|\bar S}\alpha_{|\bar S}$ follows from the preceding point by duality (since the motion functions of a spanned map and its dual are equal).

Now let $(M,S)$ be a covered map. Since $\MS$ is connecting, each cycle of the motion function $\theta$ contains an element of $S$. Hence, the number of cycles of $\theta$ and $\theta_{|S}$ is the same. Moreover, by Lemma~\ref{lem:cori}, $\theta_{|S}=\si_{|S}\al_{|S}$ is the face-permutation of $\MS$. Since $\MS$ is unicellular,  $\theta_{|S}=\si_{|S}\al_{|S}$ is cyclic and $\theta$ is also cyclic. 

Conversely, suppose that the motion function $\theta$ is cyclic. In this case, the pseudo map $\MS$ has a face-permutation which is cyclic by Lemma~\ref{lem:cori}. Hence it is a unicellular map. 
\end{proof}

Proposition~\ref{prop:covered=cyclic} immediately gives the following corollary concerning duality.
\begin{cor}\label{cor:dual-covered}
If a spanned map $(M,S)$ is a covered map, then the dual spanned map $(M^*,\bS)$ is also a covered map. Moreover the genus of  $M$ is the sum of the genera of the unicellular maps $M_{|S}$ and $M^*_{|\bS}$:
\begin{equation}
g(M)=g(M_{|S})+g(M^*_{|\bS}).\nonumber
\end{equation}
\end{cor}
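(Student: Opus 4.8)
The plan is to derive everything directly from Proposition~\ref{prop:covered=cyclic} together with the Euler formula applied to the three objects $M$, $\MS$ and $\MSs$. First I would recall that by Proposition~\ref{prop:covered=cyclic}, since $(M,S)$ is a covered map, its motion function $\theta$ is a cyclic permutation; but the motion functions of $(M,S)$ and its dual $(M^*,\bS)$ are equal (as noted in the duality paragraph of Section~\ref{sec:Maps}), so the motion function of $(M^*,\bS)$ is also cyclic. It remains to check that $(M^*,\bS)$ is genuinely a spanned map whose submap is connecting and unicellular; the "unicellular" part is exactly cyclicity of $\theta$ via Proposition~\ref{prop:covered=cyclic}, and the "connecting" part is the requirement that $\bS$ meets every vertex of $M^*$, i.e.\ every face of $M$. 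This last point is where a little care is needed.

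For the connecting property: the vertices of $M^*$ are the cycles of $\phi=\si\al$. If some face $f$ of $M$ contained no half-edge of $\bS$, then $f$ would be a cycle of $\phi$ consisting entirely of elements of $S$; I would argue this forces $\phi_{|\bS}\al_{|\bS}=\theta_{|\bS}$ to act on a set missing $f$, but more directly: since $\theta$ is cyclic and every cycle of $\theta$ meets $S$ when $\MS$ is connecting, the dual statement (cycles of $\theta$ meet $\bS$) must be established symmetrically. The cleanest route is to invoke the first part of Proposition~\ref{prop:covered=cyclic}: $\theta_{|\bS}=\phi_{|\bS}\al_{|\bS}$ is the face-permutation of $\MSs$, and $\theta$ being cyclic forces $\theta_{|\bS}$ to be a single cycle on $\bS$ (a cyclic permutation restricted to any stable-under-the-cycle-structure subset is cyclic), hence $\bS\neq\emptyset$ unless $M^*$ has a single vertex; and cyclicity of $\theta$ also guarantees each cycle of $\theta$ — there is only one — meets $\bS$, which gives that $\bS$ touches the unique... here I must be careful that "$\bS$ meets every vertex of $M^*$" is what is needed, not just "$\bS$ is nonempty". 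I would handle this by the symmetric counting argument below, or observe directly that if a cycle of $\phi$ avoids $\bS$ then $\theta$ restricted to that cycle equals $\si$ restricted there, producing a sub-orbit and contradicting cyclicity unless that cycle is all of $H$.

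Having established that $(M^*,\bS)$ is a covered map, the genus identity follows from Euler's formula. Write $n=e(M)=e(M^*)$, let $v=v(M)$, $f=f(M)$, so $v(M^*)=f$ and $f(M^*)=v$, and $2-2g(M)=v-n+f$. The submap $\MS$ has the $v$ vertices of $M$, some number $e_S$ of edges (namely $|S|/2$), and being unicellular has a single face, so $2-2g(\MS)=v-e_S+1$. Dually, $\MSs$ has the $f$ vertices of $M^*$, $e_{\bS}=n-e_S$ edges, and a single face, so $2-2g(\MSs)=f-(n-e_S)+1$. Adding the last two equations gives $4-2g(\MS)-2g(\MSs)=v+f-n+2$, i.e.\ $2-2g(\MS)-2g(\MSs)=v+f-n=2-2g(M)$, whence $g(M)=g(\MS)+g(\MSs)$.

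The main obstacle is the connecting-submap verification for $(M^*,\bS)$: one must rule out a face of $M$ lying entirely inside $S$. I expect this to be a short argument once one notes that such a face would be a cycle of $\phi$ contained in $S$, and then either (i) it would be an orbit of $\theta$ disjoint from $\bS$ in the degenerate multi-vertex case, contradicting cyclicity of $\theta$, or (ii) in the single-vertex case of $M^*$ the condition is vacuous. Everything else is bookkeeping with Euler's relation, which is routine. I would present the connecting check carefully and then dispatch the genus formula in two lines as above.
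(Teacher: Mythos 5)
Your proposal is correct and follows essentially the same route as the paper: the paper's proof of Corollary~\ref{cor:dual-covered} simply notes that the motion functions of $(M,S)$ and $(M^*,\bS)$ coincide and invokes the if-and-only-if characterization of Proposition~\ref{prop:covered=cyclic} (which already packages the connecting property you worry about into cyclicity of the motion function), then obtains the genus identity by writing Euler's relation for $M$, $\MS$ and $\MSs$ exactly as you do. One small slip in your alternative direct check: on a cycle of $\phi$ contained in $S$ the motion function acts as $\phi=\si\al$, not as $\si$, though the resulting forbidden sub-orbit argument goes through unchanged.
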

 
Corollary~\ref{cor:dual-covered} is illustrated by Figure~\ref{fig:dual+covered}. 

\begin{proof} 
The fact that $(M^*,\bS)$ is a covered map is an immediate consequence of Proposition~\ref{prop:covered=cyclic} since the motion function of a submap and of its dual are always equal. The fact that the genus adds up is obtained by writing the Euler relation for the maps $M$, $M_{|S}$ and $M^*_{|\bS}$.
\end{proof}

Let $(M,S)$ be a covered map.  By Proposition~\ref{prop:covered=cyclic}, the restrictions $\theta_{|S}$ and $\theta_{|\bS}$ of the motion function $\theta$ correspond respectively to the face-permutations of the unicellular maps $\MS$ and $M^*_{|\bS}$. This inclines to say, somewhat vaguely, that \emph{the covered map $(M,S)$ is a shuffle of  the unicellular maps $\MS$ and $\MSs$}. Making this statement precise requires introducing \emph{codes} of unicellular maps and covered maps.\\

A \emph{unicellular code} on the alphabet  $A_n=\{a_1,\bar{a}_1,\ldots,a_n,\bar{a}_n\}$ is a word on $A_n$ such that every letter of $A_n$ appears exactly once, and for all $1\leq i<j\leq n$, the letter $a_i$ appears before $\bar{a}_i$ and before $a_j$. Let $T=(H,\si,\al)$ be a unicellular map with $n$ edges. By definition, the face-permutation $\phi=\si\al$ is cyclic. Hence, there exists a unique way of relabelling the half-edges on the set $A_n$ in such a way that $\al(a_i)=\bar{a}_i$ for all $i=1\ldots n$ and $\phi=(w_1,w_2,\ldots,w_{2n})$, where $w_1$ is the root and $w=w_1w_2\cdots w_{2n}$ is a unicellular code. We call $w$ the \emph{code} of the unicellular map $T$. \\

Topologically, the code of a unicellular map is obtained by turning around the face of the map in counterclockwise direction starting from the root and writing $a_i$ when we discover the $i$th edge and writing $\bar{a}_i$ when we see this edge for the second time. For instance, the code of the unicellular map in Figure~\ref{fig:Def-map} is $w=a_1a_2a_3a_4\ba_1\ba_2\ba_4a_5\ba_5\ba_3$. We also mention that the unicellular map $T$ is a plane tree if and only if its code $w$ does not contain a subword of the form $a_ia_j\bar{a}_i\bar{a}_j$. In this special case, replacing all the letters $a_i,i=1\ldots n$ of the code $w$ by the letter $a$ and all the letters $\bar{a}_i,i=1\ldots n$ by the letter $\bar{a}$ results in no loss of information. One thereby obtains the classical bijection between plane trees and parenthesis systems on $\{a,\bar{a}\}$.

\begin{lemma}[Folklore]
The mapping which associates its code to a unicellular map is a bijection between unicellular map with $n$ edges and unicellular code on the alphabet $A_n$.  
\end{lemma}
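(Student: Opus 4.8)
The plan is to show that the map $T \mapsto w(T)$ is both well-defined and bijective by exhibiting an explicit inverse. The forward direction is essentially already given in the discussion preceding the lemma: starting from a unicellular map $T=(H,\si,\al)$ with $n$ edges, cyclicity of $\phi=\si\al$ lets us write the single cycle of $\phi$ as $(w_1,\dots,w_{2n})$ with $w_1$ the root; then relabel half-edges so that the first half-edge of each $\al$-pair to be encountered along this cycle is named $a_i$ (for the $i$-th pair discovered) and its partner $\bar a_i$. First I would check carefully that the resulting word $w=w_1\cdots w_{2n}$ really is a unicellular code: each of $a_i,\bar a_i$ appears exactly once (the cycle visits each half-edge once, and $\al$ pairs them up), $a_i$ precedes $\bar a_i$ by the choice of labelling, and $a_i$ precedes $a_j$ for $i<j$ by the convention that indices are assigned in order of first appearance. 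I would also note that this relabelling is forced — the only freedom in naming half-edges of a map up to isomorphism is a relabelling, and the constraints "$\al(a_i)=\bar a_i$, $\phi$ starts at the root and reads as a unicellular code" pin down the names uniquely — so $w(T)$ is a well-defined invariant of the isomorphism class of the rooted map.

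Next I would construct the inverse map. Given a unicellular code $w=w_1\cdots w_{2n}$ on $A_n$, set $H=A_n$, let $\al$ be the involution $(a_1,\bar a_1)\cdots(a_n,\bar a_n)$, let $\phi$ be the cyclic permutation $(w_1,w_2,\dots,w_{2n})$, and define $\si := \phi\al^{-1}=\phi\al$. This triple $(H,\si,\al)$ is a pseudo-map; I must verify it is actually a map, i.e. that $\si$ and $\al$ act transitively on $H$. Since the group generated by $\si$ and $\al$ contains $\phi=\si\al$, and $\phi$ is a single $2n$-cycle on the $2n$-element set $H$, transitivity is immediate. Declaring $w_1$ to be the root gives a rooted unicellular map whose face-permutation is $\phi$ by construction. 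It remains to check the two composites are mutually inverse: applying the code construction to this map returns $w$ because $\phi$ already reads off as $w_1\cdots w_{2n}$ from the root and the labels $a_i,\bar a_i$ already satisfy the required ordering conventions (here one uses that $w$ being a unicellular code means the first appearances occur in the order $a_1,a_2,\dots$, so the forced relabelling is the identity); conversely, starting from a map $T$, forming $w(T)$, and rebuilding yields a map with the same $H$ (after the canonical relabelling), the same $\al$, and the same $\phi$, hence the same $\si=\phi\al$, so it is isomorphic to $T$ as a rooted map.

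The only genuinely delicate point — and the one I would treat most carefully — is the bookkeeping around the canonical relabelling: one must be scrupulous that "the unicellular map $T$" is an isomorphism class, that $w(T)$ depends only on that class, and that the round trip $T \mapsto w(T) \mapsto (A_n,\phi\al,\al)$ lands on a representative of the original class rather than on a genuinely different map. This is where the hypothesis that $w$ is a \emph{unicellular code} (rather than an arbitrary word in which each letter appears once) does real work: the condition "$a_i$ before $a_j$ for $i<j$" is exactly what guarantees that the labels produced by the forward map are consistent with those consumed by the backward map, so that the two constructions compose to the identity on the nose. Everything else is a routine unwinding of definitions, so I would present the argument as: (1) $w(T)$ is a well-defined unicellular code; (2) the explicit inverse $(A_n,\phi\al,\al)$ is a unicellular map; (3) the two maps compose to the identity in both directions, with the relabelling subtlety spelled out once.
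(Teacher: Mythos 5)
Your proposal is correct and follows essentially the same route as the paper: injectivity because the root, $\al$, and $\si=\phi\al$ are all recoverable from the code, and surjectivity by building $(A_n,\phi\al,\al)$ directly from a code, with transitivity granted by the cyclicity of $\phi$. You simply spell out the relabelling bookkeeping more explicitly than the paper does.
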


\begin{proof}
The mapping is injective since the root and the edge-permutation $\alpha$ and vertex-permutation $\si=\phi\al$ can be recovered from the code. 
It is also surjective since starting from any code one obtains a pair of permutation $\al,\si$ which indeed gives a unicellular map $T=(A_n,\al,\si)$ (the only non-obvious property is the transitivity condition, but this is granted by the fact the face-permutation $\phi=\si\al$ is cyclic).
\end{proof}

A word on $A_k\uplus B_l$ (where $B_l=\{b_1,\bar{b}_1,\ldots,b_l,\bar{b}_l\}$) is a \emph{code-shuffle} if  the subwords $w_{|A}$ and $w_{|B}$ made of the letters in $A_k$ and $B_l$ respectively  are unicellular codes on $A_k$ and $B_l$. Let $(M,S)$ be a covered map, where $M=(H,\si,\al)$ and let $k=|S|/2$, $\,l=|\bS|/2$. By Lemma~\ref{prop:covered=cyclic}, the motion function $\theta$ is cyclic. Hence, there exists a unique way of relabelling the half-edges on the set $A_k\uplus B_l$ in such a way that $S=A_k$, $\bS=B_l$,  $\al(a_i)=\bar{a}_i$ for all $i=1\ldots k$, $\al(b_i)=\bar{b}_i$ for all $i=1\ldots l$, and $\theta=(w_1,w_2,\ldots,w_{2n})$, where $w_1$ is the root of $M$ and $w=w_1w_2\cdots w_{2n}$ is a code-shuffle. We call $w$ the \emph{code} of the covered map $(M,S)$.\\

Topologically, the code of a covered map $(M,S)$ is obtained by turning around the submap $T=M_{|S}$ in counterclockwise direction starting from the root and writing $a_i$ (resp. $b_i$) when we discover the $i$th edge in $S$ (resp. $\bS$) and writing $\bar{a}_i$ (resp. $\bar{b}_i$) when we see this edge for the second time. For instance, the code of the covered map in Figure~\ref{fig:bijection1}(a) is $w=a_1b_1a_2b_2\ba_2b_3\ba_1\bb_1a_3b_4a_4a_5\bb_3\ba_5\bb_2\ba_4\bb_4\ba_3$. We now state the main result of this preliminary section. 

\begin{prop}\label{prop:covered=shuffle}
The mapping $\phi$ which associates its code to a covered map is a bijection between covered maps with $n$ edges and code-shuffles of length $2n$.  Moreover, if $w$ is the code of the covered map $(M,S)$, then $w_{|A}$ is the code of the unicellular map $M_{|S}$ (on the alphabet $A_{|S|/2}$) and $w_{|B}$ is the code of the dual unicellular map $M^*_{|\bS}$ (on the alphabet $B_{|\bS|/2}$).
\end{prop}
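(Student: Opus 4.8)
The plan is to build the inverse map explicitly and check it is a two-sided inverse, leveraging the two preliminary results already available: the folklore bijection between unicellular maps and unicellular codes, and Proposition~\ref{prop:covered=cyclic} which tells us that $\theta$ is cyclic exactly when $(M,S)$ is covered, together with the identifications $\theta_{|S}=\si_{|S}\al_{|S}$ and $\theta_{|\bS}=\phi_{|\bS}\al_{|\bS}$. First I would check the map $\phi$ is well-defined: given a covered map $(M,S)$ with $n$ edges, $\theta$ is a cyclic permutation of the $2n$ half-edges by Proposition~\ref{prop:covered=cyclic}, so writing $\theta=(w_1,\dots,w_{2n})$ with $w_1$ the root determines a labelling on $A_k\uplus B_\ell$ (with $k=|S|/2$, $\ell=|\bS|/2$, $k+\ell=n$) once we demand $S=A_k$, $\bS=B_\ell$, $\al(a_i)=\bar a_i$, $\al(b_i)=\bar b_i$; the labelling is forced by reading $w$ left to right and introducing $a_i$ (resp.\ $b_i$) the first time a new half-edge-pair of $S$ (resp.\ $\bS$) is met. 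It remains to see $w$ is a code-shuffle, i.e.\ that $w_{|A}$ and $w_{|B}$ are unicellular codes; this is the point where I expect the only real work, so I address it next.

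For $w_{|A}$: by definition of the restriction of a permutation, the cyclic word obtained from $\theta=(w_1,\dots,w_{2n})$ by erasing the letters of $B_\ell$ is exactly the cyclic permutation $\theta_{|S}$, which by Proposition~\ref{prop:covered=cyclic} equals $\si_{|S}\al_{|S}$, the face-permutation of the unicellular pseudo-map $M_{|S}=(S,\si_{|S},\al_{|S})$. Since $(M,S)$ is covered, $M_{|S}$ is a connecting unicellular map, so $\si_{|S}\al_{|S}$ is a single cycle, and the folklore lemma applies: the word read off from this cycle starting at the root is precisely the code of $M_{|S}$, which is by definition a unicellular code on $A_k$. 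I must be slightly careful about the starting point — $w_{|A}$ starts at $w_1$ if $w_1\in A_k$, but if $w_1\in B_\ell$ it starts at the first $A$-letter following $w_1$ cyclically; however the code of $M_{|S}$ is defined as the reading starting from \emph{its} root, and the root of $M_{|S}$ is the first half-edge of $S$ visited along the tour of $M_{|S}$ starting from the root of $M$, which is exactly that first $A$-letter. Hence $w_{|A}$ is the code of $M_{|S}$. The argument for $w_{|B}$ is identical with $S$ replaced by $\bS$, using $\theta_{|\bS}=\phi_{|\bS}\al_{|\bS}$, the face-permutation of the dual pseudo-map $M^*_{|\bS}$, which is a connecting unicellular map by Corollary~\ref{cor:dual-covered}; so $w_{|B}$ is the code of $M^*_{|\bS}$, a unicellular code on $B_\ell$. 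This also proves the ``moreover'' part of the statement.

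Finally, bijectivity. For injectivity, note that from a code-shuffle $w$ of length $2n$ on $A_k\uplus B_\ell$ one recovers $S=A_k$, $\al$ (from $\al(a_i)=\bar a_i$, $\al(b_i)=\bar b_i$), the root $w_1$, and the motion function $\theta=(w_1,\dots,w_{2n})$; then $S$ being known together with $\theta$, the vertex-permutation is recovered by $\si(h)=\theta(h)$ for $h\notin S$ and $\si(h)=\theta\al^{-1}(h)$ for $h\in S$ (inverting the definition $\theta(h)=\si\al(h)$ on $S$, which uses the stability of $S$ by $\al$). Thus $\phi$ is injective. For surjectivity, start from an arbitrary code-shuffle $w$, build $\al,\si,S$ as just described, set $M=(A_k\uplus B_\ell,\si,\al)$ and check $(M,S)$ is a covered map whose code is $w$: one verifies $S$ is $\al$-stable, that $\theta$ as constructed is the motion function of $(M,S)$, that $\theta=(w_1,\dots,w_{2n})$ is a single cycle (immediate from $w$), hence by Proposition~\ref{prop:covered=cyclic} that $M_{|S}$ is unicellular; the only delicate points — connectivity (transitivity of $\si,\al$ on all of $H$) and the connecting property of $M_{|S}$ — follow because $\theta_{|S}=\si_{|S}\al_{|S}$ is a single cycle (as $w_{|A}$ is a unicellular code) so $M_{|S}$ is a connected spanning submap, forcing $M$ itself to be connected. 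Re-reading the code of this $(M,S)$ returns $w$ by construction, so $\phi$ is onto, completing the proof.
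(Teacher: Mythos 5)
Your proposal is correct and follows essentially the same route as the paper's proof: well-definedness and the ``moreover'' part via the identifications $\theta_{|S}=\si_{|S}\al_{|S}$, $\theta_{|\bS}=\phi_{|\bS}\al_{|\bS}$ from Proposition~\ref{prop:covered=cyclic} together with the identification of the roots of $M_{|S}$ and $M^*_{|\bS}$ as the first $A$- and $B$-letters, injectivity by the explicit inverse formula $\si(h)=\theta\al(h)$ on $S$ and $\si(h)=\theta(h)$ off $S$, and surjectivity by reconstructing $(M,S)$ from an arbitrary code-shuffle and invoking the cyclicity of $\theta$. The only difference is cosmetic: you argue transitivity of $\langle\si,\al\rangle$ through connectivity of the spanning submap, where the paper deduces it directly from $\theta$ being cyclic.
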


\begin{proof} To see that $\phi$ is injective, observe first that the code-shuffle allows to recover the root of the map $M=(H,\si,\al)$, the subset $S=A_k$, the edge-permutation $\alpha$ and the motion function $\theta=(w_1,\ldots ,w_{2n})$. From this, the vertex-permutation $\si$ is deduced by $\si(h)=\theta\al(h)$ if $h\in S$ and $\si(h)=\theta(h)$ otherwise. We now prove that $\phi$ is surjective. For this, it is sufficient to prove that starting from any shuffle-code, the pair $(M,S)$ defined as above is a covered map. First note that the permutations $\si$ and $\al$ clearly act transitively on $H$ since $\theta$ is cyclic, hence $M$ is a map. Now, the fact that $(M,S)$ is a a covered map is a consequence of  Lemma~\ref{prop:covered=cyclic} since $\theta$ is the motion function of $(M,S)$ and is cyclic.

We now prove the second statement. Let $w_{A}=w_1',\ldots,w'_{2k}$ and  $w_{B}=w_1'',\ldots,w''_{2l}$. By definition of restrictions, $\theta_{|S}=(w_1',\ldots,w'_{2k})$ and $\theta_{|\bS}=(w_1'',\ldots,w''_{2l})$. Moreover, by Proposition~\ref{prop:covered=cyclic}, these restrictions $\theta_{|S}$ and $\theta_{|\bS}$ correspond respectively to the face-permutations of $M_{|S}$ and $M^*_{|\bS}$. Recall also that the root $r_1$ of $M_{|S}$ is $\si^{i}(r)$, where $r$ is the root of $M$ and $i$ is the least integer such that $\si^{i}(r)\in S$. Equivalently, $r_1=\theta^{i}(r)$ where $i$ is the least integer such that $\theta^{i}(r)\in S$, hence $r_1=w_1'$. Similarly, the root $r_2$ of $M^*_{|\bS}$ is $\phi^{j}(r)$ where $j$ is the least integer such that $\phi^{j}(r)\in \bS$, or equivalently  $r_2=\theta^{j}(r)$ where $j$ is the least integers such that $\theta^{j}(r)\in \bS$, hence $r_1=w_1''$. Thus, the words $w_{|A}$ and $w_{|B}$ are the codes of the unicellular maps $M_{|S}$ and $M^*_{|\bS}$ respectively.
\end{proof}

We now explore the enumerative consequence of Proposition~\ref{prop:covered=shuffle}. Let $A_{g}(n)$ be the number of unicellular maps of genus $g$ with  $n$ edges. Let $C_{g_1,g_2}(n_1,n_2)$ (resp. $C_{g_1,g_2}(n)$) be the number of covered maps $(M,S)$ such that the unicellular maps $M_{|S}$ and $M^*_{|\bS}$ have respectively $n_1$ and $n_2$ edges (resp. a total of $n$ edges) and genus $g_1$ and $g_2$. Since there are ${2n_1+2n_2 \choose 2n_1}$ ways of shuffling unicellular codes of length $2n_1$ and $2n_2$, Proposition~\ref{prop:covered=shuffle} gives 
\begin{eqnarray}\label{eq:coveredbsasic}
C_{g_1,g_2}(n_1,n_2) = {2n_1+2n_2 \choose 2n_1} A_{g_1}(n_1) A_{g_2}(n_2),
\end{eqnarray}
and
\begin{eqnarray}\label{eq:covered=shuffle1}
C_{g_1,g_2}(n) = \sum_{m=0}^{n}{2n \choose 2m} A_{g_1}(m) A_{g_2}(n-m).
\end{eqnarray}
An alternative equation (used in Section~\ref{sec:enumeration}) is obtained by fixing the number of vertices of $M_{|S}$ and $M^*_{|\bS}$ instead of their genus. Let $A^v(n)$ be the number of unicellular maps with $v$ vertices and $n$ edges ($A^v(n)=A_{(n-v+1)/2}(n)$ by Euler relation and this number is 0 if $n-v+1$ is odd). Let also $C^{v,f}(n)$ be the number of covered maps with $v$ vertices, $f$ faces and $n$ edges (hence with genus $g=(n-v-f+2)/2$). Proposition~\ref{prop:covered=shuffle} gives  
\begin{eqnarray}\label{eq:covered=shuffle2}
C^{v,f}(n) = \sum_{m=0}^{n}{2n \choose 2m} A^v(m) A^f(n-m).
\end{eqnarray}

Equation~\eqref{eq:covered=shuffle1} generalizes the results used by Mullin~\cite{Mullin:tree-rooted-maps} and by Lehman and Walsh~\cite{Walsh:counting-maps-2} in order to count tree-rooted maps. Indeed, the number of tree-rooted maps of genus $g$ with $n$ edges is 
\begin{eqnarray}\label{eq:tree-rooted=shuffle}
T_{g}(n) = C_{0,g}(n) = \sum_{n=0}^{m}{2n \choose 2m} \Cat(m) A_{g}(n-m),
\end{eqnarray}
where $\Cat(m)=\frac{1}{m+1}{2m \choose m}$ is the $m$th Catalan number. In~\cite{Mullin:tree-rooted-maps},  Mullin proved  Equation~\eqref{eq:Mullin} by applying the Chu-Vandermonde identity to~\eqref{eq:tree-rooted=shuffle} (in the case $g=0$). Similarly, in~\cite{Walsh:counting-maps-2}, Lehman and Walsh proved  Equation~\eqref{eq:Lehman} by applying the Chu-Vandermonde identity to~\eqref{eq:tree-rooted=shuffle} (in the case $g=1$).
In~\cite{BeCaRo:asymptotic-nb-tree-rooted}, Bender \emph{et al.} used the asymptotic formula 
$$A_g(n)\sim_{n\to\infty} \frac{n^{3g-\frac{3}{2}}}{12^gg!\sqrt{\pi}}4^n \left(1+O\left(\frac{1}{\sqrt{n}}\right)\right)$$
which they derived from the expressions given in~\cite{Walsh:counting-maps-1}, together with~\eqref{eq:tree-rooted=shuffle} in order to determine the asymptotic number of tree-rooted maps of genus $g$ and obtained: 
$$T_{g}(n)\sim_{n\to\infty} \frac{4}{\pi g!96^g}n^{3g-3}16^n.$$

Applying the same techniques as Bender \emph{et al.} to Equation~\eqref{eq:covered=shuffle1} gives the asymptotic number of covered maps: 
\begin{eqnarray}
C_{g_1,g_2}(n)\sim_{n\to\infty} {g_1+g_2 \choose g_1}\frac{4}{\pi g!96^g}n^{3g-3}16^n.
\end{eqnarray}
In particular, the the total number of covered maps of genus $g$ with $n$ edges satisfies:
\begin{eqnarray}\label{eq:asymptotic-covered}
C_{g}(n)=\sum_{h=0}^g C_{h,g-h}(n) \sim \frac{4}{\pi g!48^g}n^{3g-3}16^n.
\end{eqnarray}
Hence the proportion of tree-rooted maps among covered maps of genus $g$ tends to $1/2^{g}$ when the size $n$ goes to infinity. We have no simple combinatorial interpretation of this fact.\\

This concludes our preliminary exploration of covered maps. We now leave the world of shuffles and concentrate on the main subject of this paper, that is, the bijection $\Psi$ between covered maps and pairs made of a tree and a unicellular bipartite map.\\


\section{The bijection.}\label{sec:bijection}
This section contains our main result, that is, the description of the bijection $\Psi$ between covered maps and pairs made of a tree and a bipartite unicellular map called the \emph{mobile}.  

Let $(M,S)$ be a covered map. The bijection $\Psi$ consists of two steps. At the first step, the submap $S$ is used to define an orientation $(I,O)$ of $M$; see Figure~\ref{fig:bijection1}.   At the second step of the bijection, which we call \emph{unfolding}, the vertices of the map $M$ are split into several vertices (the rule for the splitting is given in terms of the orientation $(I,O)$;  see Figure~\ref{fig:split}).
The map obtained after these splits is a plane tree  $A$, and the information about the splitting process can be encoded into a bipartite unicellular map $B$. The tree $A=\Psi_1(M,S)$ and the mobile $B=\Psi_2(M,S)$ are represented in Figure~\ref{fig:Bijection3}.
We now describe the two steps of the bijection $\Psi$ in more details.\\ 

\titre{Step 1: Orientation $\Delta$.} 
The orientation step is represented in Figure~\ref{fig:bijection1}. One starts with a covered map $(M,S)$ and obtains an oriented map $(M,(I,O))$. 
Topologically, the orientation $(I,O)$ is obtained by turning around the submap $\MS$ (in counterclockwise direction starting from the root) and orient each edge of $M$ according to the following rule: 
\begin{itemize}
\item each edge in $\MS$ is oriented in the direction it is followed for the first time during the tour,
\item each edge not in $\MS$ is oriented in such a way that the ingoing half-edge is crossed before the outgoing half-edge during the tour.
\end{itemize}

Let us now make definitions precise in terms of the combinatorial definition of maps. Let $(M,S)$ be a covered map, let $r$ be its root, and let $\theta$ be its motion function. Recall from Proposition~\ref{prop:covered=cyclic} that $\theta$ is a cyclic permutation on the set $H$ of half-edges. Therefore, one obtains a total order $\lS$, named \emph{appearance order}, on the set $H$ by setting $r\lS \theta(r)\lS\cdots\lS\theta^{|H|-1}(r)$. Topologically, the appearance order is the order in which half-edges of $M$ appear when turning around the spanning submap $T=M_{|S}$ in counterclockwise order starting from the root. For instance, the order obtained for the spanning submap $T$ in Figure~\ref{fig:bijection1}(a) is $a_1\lS b_1 \lS a_2 \lS b_2 \lS \ba_2\lS b_3\lS \cdots \lS \ba_3$. 
We now define the oriented map $(M,(I,O))=\Delta(M,S)$  which is represented in Figure~\ref{fig:bijection1}(b).  

\begin{definition} 
Let $(M,S)$ be a covered map with half-edge set $H$. The mapping $\Delta$ associates to $(M,S)$ the oriented map  $(M,(I,O))$, where the set $I$ of ingoing half-edges contains the half-edges $h\in S$ such that $\alpha(h)\lS h$ and the half-edges $h\notin S$ such that  $h\lS \alpha(h)$ (and $O=H\setminus I$).
\end{definition}

We now characterize the image of the mapping $\Delta$ by defining left-connected orientations.
Let $M=(H,\si,\al)$ be a map and let $(I,O)$ be an orientation. 
Let $h_0$ denote the root of $M$. A \emph{left-path} is a sequence  $h_1,h_2,\ldots,h_{k}$ of ingoing half-edges such that for all $i=1\ldots k$, there exists an integer $q_i>0$ such that $h_{i-1}=\si^{q_i}(\al(h_{i}))$ and $\si^{p}(\al(h_{i}))\in O$ for all $p=0\ldots q_i-1$. In words, a left-path is a directed path starting from the arrow pointing the root-corner and such that no ingoing half-edges is incident to the left of the path. Clearly, for any ingoing half-edge $h$, there exists at most one left-path $h_1,h_2,\ldots,h_{k}$ whose extremity is $h_k$ is $h$.  We say that an oriented map $(M,(I,O))$ is \emph{left-connected} if every ingoing half-edge is the extremity of a left-path.

\begin{figure}[ht!]\begin{center} \input{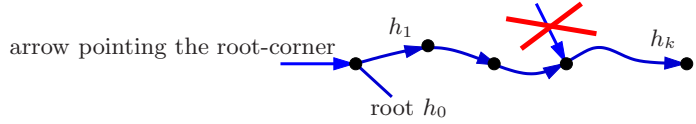}\caption{A left-path.}\label{fig:left-path} \end{center}\end{figure}

\begin{thm}\label{thm:Delta}
The mapping $\Delta$ is a bijection between covered maps and left-connected maps.
\end{thm}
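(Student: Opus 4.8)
The strategy is to show that $\Delta$ is injective by exhibiting an explicit inverse on left-connected maps, and then to show that the image of $\Delta$ is exactly the set of left-connected maps, by proving (a) every covered map is sent to a left-connected map, and (b) every left-connected map arises in this way. The key intuition, to be made precise combinatorially, is that the appearance order $\lS$ of a covered map is recording the counterclockwise tour around the submap $\MS$, and that this tour is exactly a ``leftmost depth-first traversal'' of the map with respect to the orientation produced; conversely, a left-connected orientation has a canonical such tour.

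\textbf{Step 1: $\Delta$ lands in left-connected maps.} Let $(M,S)$ be a covered map with motion function $\theta$ and appearance order $\lS$. First I would check that the definition of $I$ makes $(I,O)$ a genuine orientation, i.e.\ that for each edge $\{h,\alpha(h)\}$ exactly one of $h,\alpha(h)$ lies in $I$; this is immediate from the defining condition since, e.g., for $h,\alpha(h)\in S$ exactly one of $\alpha(h)\lS h$, $h\lS\alpha(h)$ holds. Next, the crucial point: I want to show that $\theta$ restricted to its action ``follows left-paths.'' Concretely, I would prove that for an ingoing half-edge $h$, if one walks backwards along $\theta$ from $\alpha(h)$ (equivalently, walks forward from the root and looks at what immediately precedes $\alpha(h)$ in $\lS$), one traverses precisely outgoing half-edges around a vertex until reaching another ingoing half-edge, thereby realizing the left-path predecessor. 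This is essentially an unwinding of the definition of $\theta$ ($\theta=\si\al$ on $S$, $\theta=\si$ off $S$) combined with the defining inequalities for $I$ and $O$: crossing an outgoing half-edge $h'$ in the tour corresponds to an application of $\si$ (because either $h'\notin S$ and $h'\lS\alpha(h')$ puts us in the $\theta=\si$ case for $h'$, or $h'\in S$ with $\alpha(h')\lS h'$... — the bookkeeping here is the technical heart). Because $\theta$ is cyclic (Proposition~\ref{prop:covered=cyclic}) and $r=h_0$ is reached, iterating backwards from any ingoing $h$ along these ``$\si$-steps through $O$'' must eventually reach the root-corner, which is exactly the statement that $h$ is the extremity of a left-path. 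Hence $\Delta(M,S)$ is left-connected.

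\textbf{Step 2: Injectivity and surjectivity.} For injectivity I would reconstruct $(M,S)$ from $(M,(I,O))$: given the orientation, define $S$ to be the set of $h$ for which... — but $S$ is not determined by $(I,O)$ alone in an obvious way, so instead I reconstruct $\theta$ directly. The point is that $\lS$ (hence $\theta$, hence $S$ via the last remark of the \emph{motion function} paragraph) is recoverable from $(M,(I,O))$ as the unique order obtained by the ``leftmost tour'': start at the root, and at each step apply the rule that from a half-edge we move by $\si$ if the current half-edge is outgoing-in-the-appropriate-sense and by $\si\al$ otherwise — more cleanly, reconstruct the motion function $\theta$ by the formula making it the canonical traversal permutation of the left-connected orientation, and check this agrees with the original $\theta$ on covered maps by Step 1. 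Then $S=\{h : \theta(h)=\si\al(h)\}$ and the map structure are recovered, giving injectivity. For surjectivity, start from an arbitrary left-connected map $(M,(I,O))$, define $\theta$ by this canonical-traversal formula, show $\theta$ is a cyclic permutation (using left-connectedness to see every half-edge is visited — the left-paths guarantee every ingoing half-edge, hence every vertex and corner, is reached), set $S=\{h:\theta(h)=\si\al(h)\}$, verify $S$ is $\alpha$-stable and that $\theta$ is then literally the motion function of $(M,S)$, so $(M,S)$ is a covered map by Proposition~\ref{prop:covered=cyclic}, and finally check $\Delta(M,S)=(M,(I,O))$ by re-running Step 1's analysis.

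\textbf{Main obstacle.} The delicate part is the bookkeeping in Step 1 identifying the backward $\theta$-orbit through outgoing half-edges with a left-path: one must carefully track, for each of the four cases ($h\in S$ or not, ingoing or not), how $\theta$ and $\theta^{-1}=\alpha\si^{-1}$ interact with the defining inequalities $\alpha(h)\lS h$ / $h\lS\alpha(h)$, and confirm that ``no ingoing half-edge incident to the left of the path'' translates exactly into ``the intermediate half-edges $\si^p(\alpha(h_i))$ are all in $O$.'' Once this dictionary between the tour $\theta$ and left-paths is established, injectivity and surjectivity follow formally; I expect the paper to isolate this dictionary as the core lemma and then treat Steps 2 as relatively mechanical consequences.
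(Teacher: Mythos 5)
Your plan matches the paper's proof in all essentials: the paper also (i) characterizes left-connectedness via a backward function $\beta$ ($\beta(h)=\si(h)$ on $O$, $\phi(h)$ on $I$) reaching the root, (ii) proves $\Delta$ lands in left-connected maps by exactly the four-case analysis you anticipate, showing $h\lS\beta(h)$ whenever $\beta(h)\neq r$ so the $\beta$-orbit must hit the root, and (iii) defines the inverse $\Gamma$ as the ``canonical traversal'' procedure you describe, which rebuilds the tour from the root and decides membership in $S$ by whether a half-edge is outgoing when first visited, with surjectivity and the two composition identities checked just as you outline. The only refinement the paper adds to your sketch is making the termination argument rest on the strict monotonicity $h\lS\beta(h)$ rather than on cyclicity of $\theta$ directly, but this is the same dictionary you identify as the technical heart.
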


 The proof of Theorem~\ref{thm:Delta} is postponed to Section~\ref{sec:proofs}.\\

\titre{Remark on the planar case:} It is shown in~\cite[Prop. 3]{OB:boisees} that the mapping $\Delta$ is a bijection between planar covered maps (i.e. tree-rooted maps) and planar oriented maps which are \emph{accessible} (any vertex can be reach from the root-vertex by a directed path) and \emph{minimal} (no directed cycle has the root-face on its right). Thus, in the planar case the left-connected orientations are the accessible minimal orientations.\\

\titre{Step 2: Unfolding $\Lambda$.}  
The unfolding step is represented in Figures~\ref{fig:bijection2} and~\ref{fig:Bijection3}. One starts with a left-connected map $(M,(I,O))$ and obtains two maps $A=\Lambda_1(M,(I,O))$ and $B=\Lambda_2(M,(I,O))$.  The map  $A$ is a plane tree and the map  $B$ is a bipartite unicellular map (with \emph{black} and \emph{white} vertices). By analogy with the paper~\cite{BDFG:mobiles}, we call the bipartite unicellular $B$ the \emph{mobile} associated with the left-connected map $(M,(I,O))$.
Let us start with the topological description of this step. Let $v$ be a vertex of the oriented map $(M,(I,O))$ and  let $h_1,\ldots,h_d$ be the incident half-edges in counterclockwise order around~$v$ (here it is important to consider the arrow pointing the root-corner as an ingoing half-edge). If the vertex $v$ is incident to $k>0$ ingoing half-edges, say $h_{i_1},h_{i_2},\ldots,h_{i_k}=h_d$, then the vertex $v$ of $M$ will be split into $k$ vertices $v_1, v_2,\ldots,v_k$ of the tree $A$. The splitting rule is represented in Figure~\ref{fig:split}: for $j=1\ldots k$, the vertex $v_j$ of the trees $A$ is incident to the half-edges $h_{i_{j-1}+1},h_{i_{j-1}+2},\ldots,h_{i_j}$.\\ 

\begin{figure}[ht!]\begin{center} \input{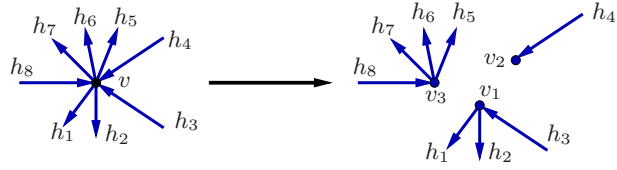}\caption{Splitting of a vertex $v$ incident to 3 ingoing half-edges $h_3,h_4,h_8$.}\label{fig:split} \end{center}\end{figure}

Observe that the splitting of the vertex $v$ can be written conveniently in terms of permutations. Indeed, seeing the vertex $v$ as the cycle  $(h_1,\ldots,h_d)$ of the vertex-permutation~$\si$ and the vertices $v_1=(h_1,\ldots h_{i_1})$, \ldots, $v_k=(h_{i_{k-1}+1},\ldots,h_{i_k})$ as cycles  of the vertex-permutation $\tau$ of the tree $A$ gives the following relation between $v$ and the product of cycles $v'=v_1v_2\ldots,v_k$ (these are both permutations on $\{h_1,\ldots,h_k\}$) 
$$v=v'\pi_\circ,$$ 
where $\pi_\circ$ is the permutation such that $\pi_\circ(h)=h$ if $h\in O$ and $\pi_\circ(h_{i_j})=h_{i_{j+1}}$ for $j=1,\ldots,l$. Hence, $v'=v\pi_\circ^{-1}$, where $\pi_\circ=v_{|I}$ (with the convention that the restriction $v_{|I}$ acts as the identity on $O$). The cycle $(h_{i_1},h_{i_2},\ldots,h_{i_l})$ of $\pi_\circ$ will represent one of the (white) vertices of the bipartite unicellular map $B$. This white vertex is represented in Figure~\ref{fig:bijection2}(a).\\

\begin{figure}[t!]
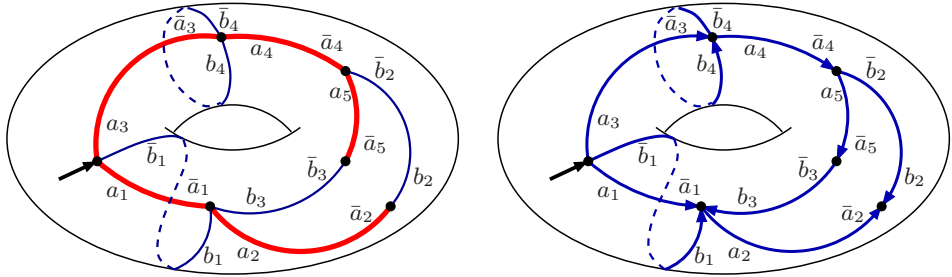

\begin{center} 
\input{Bijection.pstex_t}\hspace{.5cm}\input{Bijection1.pstex_t}
\caption{(a) A covered map of genus 1 (the unicellular submap is indicated by thick lines) (b) The associated oriented map.}\label{fig:bijection1} 
\end{center}
\end{figure}
\begin{figure}[t!]
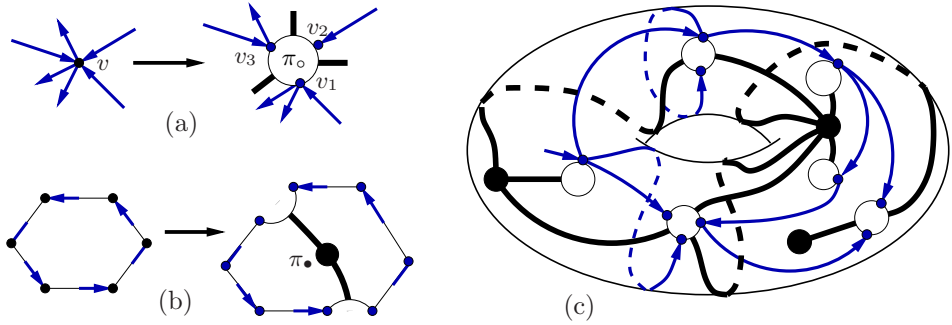

\begin{center} 
\input{Unfolding.pstex_t}\hspace{.7cm}\input{bijection2.pstex_t}
\caption{Representation of the unfolding: (a) around one vertex (this defines one cycle of $\pi_{\circ}$);  (b) around one face (this defines one cycle of $\pi_{\bu}$); (c) on the map of Figure~\ref{fig:bijection1}.}\label{fig:bijection2} 
\end{center}
\end{figure}
\begin{figure}[t!]
\begin{center} 
\input{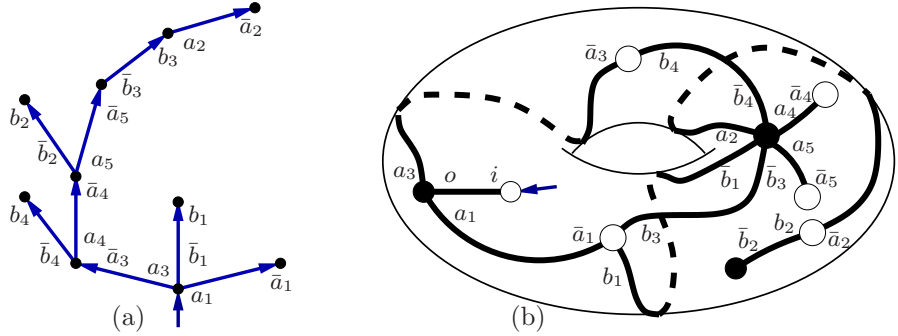}
\caption{The image by $\Psi$ of the covered map $(M,S)$ of Figure~\ref{fig:bijection1}. (a) The tree $\Psi_1(M,S)$. (b) The unicellular map $\Psi_2(M,S)$.}\label{fig:Bijection3} 
\end{center}
\end{figure}
We now describe the unfolding step in more details. Let $r$ be the root of the map $M=(H,\si,\al)$ and let $\phi=\si\al$ be its face-permutation .  We consider two new \emph{half-edges} $i$ and $o$ not in $H$ and define $H'=H\cup\{i,o\}$, $I'=I\cup \{i\}$ and $O'=O\cup \{o\}$ (the half-edge $i$ should be thought as this half-edge pointing to the root-corner, while $o$ should be thought as its dual). We define the involution $\alpha'$ on $H'$ by setting $\alpha'(i)=o$ and $\alpha'(h)=\alpha(h)$ for all $h\in H$. 
We also define $\si'$ as the permutation on $H'$ obtained  from $\si$ by inserting the new half-edge $i$ just before the root~$r$ in the cycle of $\si$ containing $r$ and creating a cycle made of $o$ alone (that is, $\si'(o)=o$). 
Similarly we define $\phi'$ as the permutation on $H'$ obtained  from $\phi$ by inserting the new half-edge~$o$ just before~$r$ in the cycle of $\phi$ containing $r$ and creating a cycle made of $i$ alone. Recall that $\phi=\si\al$ and observe that $\phi'=(i,o)\si'\al'$. We consider the restrictions 
\begin{eqnarray}\label{eq:pibullet}
\pi_\circ=\si'_{|I'}~~\textrm{ and }~~\pi_\bu=\phi'_{|O'}~.
\end{eqnarray}
In the example of Figure~\ref{fig:bijection1}, one gets $\pi_\circ=(i)(\ba_1,b_1,b_3)(\ba_2,b_2)(\ba_3,b_4)(\ba_5)(\ba_4)$ and $\pi_\bu=(o,a_1,a_3)(\bb_1,a_2,\bb_4,a_4,a_5,\bb_3)(\bb_2)$. 
We now define the permutations $\pi$ and $\tau'$ on $H'$, and a permutation $\tau$ on $H$ by setting 
\begin{eqnarray}\label{eq:pitau}
\pi=\pi_\circ \pi_\bu^{-1},~~ \tau'=\si'\pi_\circ^{-1} ~\textrm{ and }~ \tau=\tau'_{|H}~,
\end{eqnarray}
where a slight abuse of notation is done by considering that $\pi_\circ=\si'_{|I'}$ acts as the identity on $O'$ and that $\pi_\bu=\phi'_{|O'}$ acts as the identity on~$I'$. It is easily seen that $\tau'(o)=o$. On the other hand, we will show (Lemma~\ref{lem:not-alone}) that the half-edge $i$ is not alone in its cycle of $\tau'$. Hence, the half-edge $t=\tau'(i)$ is distinct from $i$ and $o$. We now consider the pseudo maps $A=(H,\tau,\al)$ with root $t=\tau'(i)$  and $B=(H',\pi,\al')$ with root $i$.

\begin{definition}
We denote by $\Lambda$ the mapping which to a left-connected map $(M,(I,O))$ associates the pair $(A,B)$. We also denote $\Psi=\Lambda\circ \Delta$. Lastly if $(M,S)$ denotes the covered map such that $(M,(I,O))=\Delta(M,S)$, we denote $\Psi_1(M,S)= \Lambda_1(M,(I,O))=A$ and  $\Psi_2(M,S)=\Lambda_2(M,(I,O))=B$.
\end{definition}

The images $(A,B)$ of the covered map in Figure~\ref{fig:bijection1}(a) by the mappings $\Psi_1$ and $\Psi_2$ are represented respectively in Figure~\ref{fig:Bijection3}(a) and~(b). In terms of permutations, one gets $A=(H,\tau,\al)$ and $B=(H',\pi,\al')$, where 
$$\tau=(a_1,\bb_1,\ba_3)(\ba_1)(b_1)(\ba_3,a_4,\bb_4)(\ba_4,a_5,b_2)(\ba_5,\bb_3)(b_3,a_2)(\ba_2)(\bb_2)(b_4)$$ and 
$$\pi=(i)(\ba_1,b_1,b_3)(\ba_2,b_2)(\ba_3,b_4)(\ba_4)(o,a_3,a_1)(\bb_3,a_5,a_4,\bb_4,a_2,\bb_1)(\bb_2).$$ 

Our main result is the following theorem which will be proved in Section~\ref{sec:proofs}.

\begin{thm}\label{thm:bij}
The mapping $\Psi=\Lambda\circ\Delta$ which to a covered map $(M,S)$ associates the pair $(\Psi_1(M,S),\Psi_2(M,S))$ is a bijection between covered maps of size $n$ and genus $g$ and pairs made of a plane tree $\Psi_1(M,S)$ of size $n$ and a bipartite unicellular map $\Psi_2(M,S)$ of size $n\!+\!1$ and genus~$g$.  Moreover by coloring the vertices of the bipartite map $\Psi_2(M,S)$ in two colors, say white and black, with the root-vertex being white, one gets $v(M)$ white vertices and $f(M)$ black vertices.
\end{thm}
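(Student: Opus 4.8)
The plan is to prove the statement in two stages: reduce everything to the unfolding step $\Lambda$, and then analyze $\Lambda$ directly. By Theorem~\ref{thm:Delta}, $\Delta$ is a bijection between covered maps and left-connected maps preserving the numbers of edges, vertices and faces (hence the genus), so it suffices to show that $\Lambda$ is a bijection from left-connected maps of size $n$ and genus $g$ onto pairs made of a plane tree of size $n$ and a bipartite unicellular map of size $n+1$ and genus $g$, with $v(M)$ white and $f(M)$ black vertices. First I would isolate two properties of an arbitrary left-connected map $(M,(I,O))$: (P1) every vertex other than the root-vertex carries an ingoing half-edge, and (P2) every face other than the root-face carries an outgoing half-edge. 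Both follow from the covered-map description: writing $(M,S)=\Delta^{-1}(M,(I,O))$, the motion function $\theta$ is cyclic (Proposition~\ref{prop:covered=cyclic}) and realizes the tour of $\MS$; the first edge of $S$ along which this tour reaches a given non-root vertex is crossed into that vertex, so by the definition of $\Delta$ it contributes an ingoing half-edge there, which is (P1); applying the same argument to the dual covered map $(M^*,\bS)$, which is covered by Corollary~\ref{cor:dual-covered}, has the same motion function, and receives from $\Delta$ the reversed orientation, yields (P2). I would also record the elementary fact that the half-edge $\si^{-1}(r)$ immediately preceding the root is always outgoing: otherwise there would be no left-path of positive length at all (the definition forces $\si^{-1}(r)$ to lie in $O$), contradicting that the ingoing half-edge $\si^{-1}(r)$ must itself be the extremity of a left-path. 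This last fact gives Lemma~\ref{lem:not-alone}, namely $\tau'(i)\notin\{i,o\}$, so that the roots of $A$ and $B$ are well-defined.

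Next I would establish the properties of $B=(H',\pi,\al')$. Since $\pi=\pi_\circ\pi_\bu^{-1}$ with $\pi_\circ=\si'_{|I'}$ supported on $I'$ and $\pi_\bu^{-1}=(\phi'_{|O'})^{-1}$ supported on $O'$, the two factors have disjoint supports, so $\pi$ acts as $\pi_\circ$ on $I'$ and as $\pi_\bu^{-1}$ on $O'$, and its cycles are exactly those of $\pi_\circ$ (all inside $I'$) together with those of $\pi_\bu^{-1}$ (all inside $O'$); colouring a vertex of $B$ white when it lies in $I'$ and black when it lies in $O'$, and using that $\al'$ maps $I'$ onto $O'$, shows $B$ is bipartite with root-vertex (the one through $i$) white. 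Property (P1) then identifies the white vertices, i.e.\ the cycles of $\si'_{|I'}$, with the vertices of $M$ (the cycle of $i$ accounting for the root-vertex), and (P2) identifies the black vertices, i.e.\ the cycles of $\phi'_{|O'}$, with the faces of $M$. The heart of the matter, and the step I expect to be the main obstacle, is unicellularity: I would show that the face-permutation $\pi\al'$ of $B$ is a single cycle by expressing it through the motion function $\theta$ — intuitively, turning around the unique face of the mobile is the tour of $\MS$ with the pendant edge $\{i,o\}$ spliced in at the root-corner — via a restriction identity in the spirit of Lemma~\ref{lem:cori}; cyclicity of $\pi\al'$ then reduces to cyclicity of $\theta$, which holds because $(M,S)$ is covered (Proposition~\ref{prop:covered=cyclic}). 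Granting this, $B$ has $n+1$ edges, one face, $v(M)+f(M)$ vertices, is connected (a pseudo-map with a single face has a single component), and has genus $g$ by the Euler relation together with $v(M)-e(M)+f(M)=2-2g$.

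The tree $A=(H,\tau,\al)$ is treated in parallel. From $\tau=\tau'_{|H}$ and $\tau'=\si'\pi_\circ^{-1}=\si'(\si'_{|I'})^{-1}$ one reads off the splitting rule: each vertex $v$ of $M$ gives rise to exactly (number of ingoing half-edges at $v$) cycles of $\tau$, a number positive by (P1); hence $\tau$ has $|I'|=n+1$ cycles, so $A$ has $n+1$ vertices and $n$ edges. The same restriction computation as for $B$ shows that $\tau\al$ is a single cycle, so $A$ has one face; a one-faced pseudo-map with $n+1$ vertices and $n$ edges is connected and of genus $0$, i.e.\ a plane tree. Together with the previous paragraph this proves that $\Psi=\Lambda\circ\Delta$ sends covered maps of size $n$ and genus $g$ to pairs of the prescribed type with the prescribed vertex-colour statistics.

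It remains to prove that $\Lambda$ is a bijection, which I would do by exhibiting its inverse. Given a plane tree $A$ and a bipartite unicellular map $B$ with $e(B)=e(A)+1$, realize them on half-edge sets $H\subseteq H'$ with $\al=\al'_{|H}$, let $i$ be the root of $B$ and $o=\al'(i)$, let $I'$ and $O'$ be the sets of half-edges lying on white and on black vertices of $B$, and set $\pi_\circ=\pi_{|I'}$, $\pi_\bu^{-1}=\pi_{|O'}$; reconstruct $\tau'$ from $\tau$ by inserting $i$ just before the root $t$ of $A$ and adjoining $o$ as a fixed point, then put $\si'=\tau'\pi_\circ$ and define $M=(H,\si'_{|H},\al)$ with orientation $(I'\setminus\{i\},O'\setminus\{o\})$. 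These formulas invert those defining $\Lambda$ wherever everything is defined, and $M$ is connected because passing from $\tau$ to $\si'$ only merges cycles, so $\langle\si'_{|H},\al\rangle$ is transitive once $\langle\tau,\al\rangle$ is. The one substantial point left — the other place I anticipate real work, and which is tied to the unicellularity argument above — is that the reconstructed oriented map is left-connected: this is exactly the condition encoding the cyclicity of the face-permutation of $B$, so it follows by reversing the restriction identity; equivalently, by Theorem~\ref{thm:Delta} it is enough to check that the spanned map $(M,S)$ with $S$ recovered from its motion function is a covered map, i.e.\ that this motion function is cyclic, which holds because it is conjugate to the face-permutation of $B$. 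Assembling the two stages yields the theorem, including the genus and vertex-colour statements verified en route.
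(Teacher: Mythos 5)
Your architecture matches the paper's (reduce to $\Lambda$ via Theorem~\ref{thm:Delta}, show $A$ is a tree and $B$ a bipartite unicellular map with the stated vertex counts, then build the inverse), and several pieces are correct: your (P1)--(P2) are the paper's Lemma~\ref{lem:some-ingoing}, your identification of $\si^{-1}(r)\in O$ is Lemma~\ref{lem:not-alone}, and your counting of the cycles of $\tau$ and of the white/black vertices of $B$ is sound. But the two steps you yourself flag as the real work are not proved, and the route you propose for them rests on a false premise. You claim that the face-permutation $\psi=\pi\al'$ of the mobile is obtained from the motion function $\theta$ of $(M,S)$ by a restriction identity ``in the spirit of Lemma~\ref{lem:cori}'' --- intuitively, that the tour of the mobile's face is the tour of $\MS$ with the edge $\{i,o\}$ spliced in. This is not true. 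On the paper's running example (Figures~\ref{fig:bijection1} and~\ref{fig:Bijection3}) one has $\theta(b_1)=a_2$ while $\psi_{|H}(b_1)=\psi(b_1)=\pi(\bb_1)=\bb_3$; the clockwise order of white corners of $B$ is not the appearance order of ingoing half-edges along the tour of $\MS$. The correct statement, which is the heart of the paper's proof (Lemma~\ref{lem:order-coincide} and Proposition~\ref{prop:mobile}), is that $\psi^{-1}_{|I'}=\al'\varphi'_{|O'}\al'$ where $\varphi'$ is essentially the face-permutation of the \emph{tree} $A$: the white corners of $B$ in clockwise order match the vertices of $A$ in counterclockwise order. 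Unicellularity of $B$ follows because $\varphi'_{|O'}$ is a single cycle, which requires knowing \emph{first} that $A$ is a tree --- so your plan to deduce one-facedness of $A$ ``by the same restriction computation as for $B$'' is circular as well as unsubstantiated. The paper instead proves connectivity of $A$ directly from left-connectedness, by showing the backward functions of $(M,(I,O))$ and of $(A,(I,O))$ coincide away from the root (Lemma~\ref{lem:transitively}); this is the one place where left-connectedness is actually used in the forward direction, and your proposal never uses it there.

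The same gap reappears in your inverse construction. You assert that left-connectedness of the reconstructed map ``is exactly the condition encoding the cyclicity of the face-permutation of $B$'' and that the reconstructed motion function ``is conjugate to the face-permutation of $B$''; neither is correct (the two permutations do not even act on sets of the same size, and the restriction fails as above). In the paper, left-connectedness of $\Omega(\tA,\tB)$ comes from the \emph{tree}: the backward function of $\tM$ agrees with that of $\tA$ away from the root, and a root-to-leaves oriented tree is left-connected. Moreover your reconstruction glosses over the one piece of data that makes $\si'=\ttau'\tpi_\circ$ well defined, namely how the half-edges of $B$ are identified with those of $A$: this must be done so that the clockwise order of white corners of $B$ matches the counterclockwise order of first corners of $A$ (the coherent labelling of Lemma~\ref{lem:formecanonique} and conditions (i)--(iv)); with an arbitrary identification the composite $\Lambda\circ\Omega$ is not the identity. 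In short: the skeleton is right, but the order-matching between the mobile's face and the tree's contour --- not the motion function $\theta$ --- is the key identity, and it is missing from your argument.
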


\titre{Remark (topological intuition).} From Figures~\ref{fig:bijection2}(a) and (b), the reader should see that the mobile $B=\Lambda_2(M,(I,O))$ has white vertices (the cycles of $\pi_\circ$ made of half-edges in $I'$) corresponding to the vertices of $M$ and  black vertices (the cycles of $\pi_\bu^{-1}$ made of half-edges in $O'$) corresponding to the faces of $M$. The topological intuition explaining that $A=\Psi_1(M,S)$ is connected, is that left-paths are preserved during the unfolding step. It then follows that $A$ is a plane tree because its number of vertices is one more than its number of edges. The topological intuition  explaining that the mobile $B$ is a unicellular map comes from the fact that $A$ can reach every white corners of $B$ (without crossing its edges). Indeed, this implies that the pseudo-map $B$ has no contractible cycles. 
Using this fact and the relation between the number of vertices and edges shows that $B$ is a unicellular map of genus $g(M)$.\\


\section{Enumerative corollaries} \label{sec:enumeration}
In this section we explore the enumerative consequences of the bijection $\Psi$, and establish the equivalence between the formula \eqref{eq:HarerZagier} of Harer and Zagier and the formula \eqref{eq:JacksonAdrianov} of Jackson.

Recall the notations of Section~\ref{sec:shuffles}:  $A_g(n)$, $B_g(n)$, $C_g(n)$ are respectively the number of general unicellular maps, bipartite unicellular maps, and covered maps with $n$ edges and genus $g$. Similarly $A^v(n)$, $B^{v,f}(n)$, $C^{v,f}(n)$ are respectively the number of general unicellular maps with $v$ vertices, bipartite unicellular maps with $v$ white and $f$ black vertices, and covered maps with $v$ vertices and $f$ faces having $n$ edges. The first direct consequence of Theorem~\ref{thm:bij} is:
\begin{theorem}
\label{thm:allcovered}
The numbers $C_g(n)$ and $C^{v,f}(n)$ of covered maps, and the numbers $B_g(n)$ and $B^{v,f}(n)$ of bipartite maps are related by:
\begin{eqnarray}
C_g(n) &=& \Cat(n)\, B_g(n+1)\label{eq:covered}\\
C^{v,f}(n) &=& \Cat(n)\, B^{v,f}(n+1)\label{eq:allcovered}
\end{eqnarray}
where $\Cat(n)=\frac{1}{n+1}{2n\choose n}$.
\end{theorem}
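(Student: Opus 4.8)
Looking at this, Theorem~\ref{thm:allcovered} is stated as an immediate corollary of the main bijection Theorem~\ref{thm:bij}. The plan is simply to unpack what Theorem~\ref{thm:bij} says in enumerative terms.

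\medskip

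The plan is to derive both identities directly from the bijection $\Psi$ established in Theorem~\ref{thm:bij}, without any further combinatorial construction. First I would recall that $\Psi$ is a bijection from covered maps of size $n$ and genus $g$ onto pairs $(A,B)$, where $A$ is a plane tree with $n$ edges and $B$ is a bipartite unicellular map with $n+1$ edges and genus $g$. The crucial point is that the two coordinates of the image are \emph{independent}: given any plane tree $A$ with $n$ edges and any bipartite unicellular map $B$ with $n+1$ edges and genus $g$, the pair $(A,B)$ lies in the image. Hence the set of covered maps of size $n$ and genus $g$ is in bijection with the Cartesian product of the set of plane trees with $n$ edges and the set of bipartite unicellular maps of size $n+1$ and genus $g$. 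Since plane trees with $n$ edges are counted by $\Cat(n)=\frac{1}{n+1}\binom{2n}{n}$ (the classical bijection with parenthesis systems, recalled after the folklore lemma on unicellular codes), taking cardinalities yields $C_g(n)=\Cat(n)\,B_g(n+1)$, which is \eqref{eq:covered}.

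\medskip

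For \eqref{eq:allcovered} I would refine the counting by the last sentence of Theorem~\ref{thm:bij}: if the covered map $(M,S)$ has $v$ vertices and $f$ faces, then $\Psi_2(M,S)$ has exactly $v$ white vertices and $f$ black vertices (with the root-vertex colored white). The number of vertices of $A$ plays no role here, so the bijection $\Psi$ restricts to a bijection between covered maps with $n$ edges, $v$ vertices and $f$ faces, and pairs $(A,B)$ with $A$ an arbitrary plane tree with $n$ edges and $B$ a bipartite unicellular map with $n+1$ edges, $v$ white vertices and $f$ black vertices. (Note that the genus is then determined by Euler's formula, $g=(n-v-f+2)/2$, consistently with the genus statement in Theorem~\ref{thm:bij}.) Again the two factors are independent, so taking cardinalities gives $C^{v,f}(n)=\Cat(n)\,B^{v,f}(n+1)$.

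\medskip

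There is essentially no obstacle: all the work is in Theorem~\ref{thm:bij}, whose proof is deferred to Section~\ref{sec:proofs}. The only mild subtlety to be explicit about is that $\Psi$ lands in the \emph{full} Cartesian product (surjectivity onto pairs, not merely injectivity into them), which is exactly the content of "is a bijection between \dots\ and pairs made of\dots" in the statement of Theorem~\ref{thm:bij}; once that is granted, both \eqref{eq:covered} and \eqref{eq:allcovered} follow by multiplying cardinalities and using $\#\{\text{plane trees with }n\text{ edges}\}=\Cat(n)$.
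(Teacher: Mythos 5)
Your proposal is correct and matches the paper's (implicit) argument exactly: the paper states Theorem~\ref{thm:allcovered} as a direct consequence of Theorem~\ref{thm:bij} with no further proof, and your unpacking — that $\Psi$ is a bijection onto the full Cartesian product, refined by genus for \eqref{eq:covered} and by the white/black vertex counts for \eqref{eq:allcovered} — is precisely what is intended.
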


Observe that for $g=0$ we have $B_0(n+1)=\Cat(n+1)$, so that Equation~\eqref{eq:covered} coincides with Mullin's formula (Equation~\eqref{eq:Mullin}).
Using known closed-form expressions for the numbers $B_g(n)$ (see~\cite{Goupil-Schaeffer:n-cycles}), one obtains similar formulas for other genera.
For example in genus $1$ and $2$ we obtain the following expressions for the number of covered maps with $n$ edges:
$$C_1(n) = \Cat(n)\frac{(2n-1)!}{6(n-2)!(n-1)!},~~~~~~~
C_2(n)=\Cat(n)\frac{(5n^2+3n+4)(2n)!}{1440(n-1)!(n-4)!}.$$

We now examine the special case of the torus (genus 1). By Lemma~\ref{cor:dual-covered}, a covered map on the torus is either a tree-rooted map (the submap has genus 0, that is, is a spanning tree) or the dual of a tree-rooted map.  Since duality is involutive, \emph{exactly half} of toroidal covered maps of given size are tree-rooted maps. This gives the first bijective proof to the following result:
\begin{corollary}[Lehman and Walsh~\cite{Walsh:counting-maps-2}]
The number of tree-rooted maps with $n$ edges on the torus is:
\begin{eqnarray*}\label{eq:LW}
T_{1}(n)=\frac{1}{2} C_1(n) = \frac{(2n)!(2n-1)!}{12(n+1)!n!(n-1)!(n-2)!}.
\end{eqnarray*}
\end{corollary}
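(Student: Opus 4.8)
The statement to prove is the Lehman--Walsh formula for the number $T_1(n)$ of tree-rooted maps with $n$ edges on the torus. The plan is to combine the two structural facts already established in the excerpt. First, by Corollary~\ref{cor:dual-covered}, duality of covered maps is an involution, and for a covered map $(M,S)$ of genus $1$ we have $g(M_{|S})+g(M^*_{|\bS})=1$; hence exactly one of the two unicellular submaps $M_{|S}$, $M^*_{|\bS}$ is a tree (genus $0$) and the other has genus $1$. Consequently the involution $(M,S)\mapsto(M^*,\bS)$ has no fixed points among genus-$1$ covered maps and pairs up tree-rooted maps with their duals; therefore $T_1(n)=\frac12\, C_1(n)$, where $C_1(n)$ is the number of genus-$1$ covered maps with $n$ edges.

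Second, I would invoke Theorem~\ref{thm:allcovered}, specifically Equation~\eqref{eq:covered} in the case $g=1$, which gives $C_1(n)=\Cat(n)\,B_1(n+1)$, where $B_1(n+1)$ is the number of bipartite unicellular maps of genus $1$ with $n+1$ edges. Combining the two displays yields
$$T_1(n)=\frac12\,\Cat(n)\,B_1(n+1).$$
It then remains to substitute closed-form expressions: $\Cat(n)=\frac{(2n)!}{n!\,(n+1)!}$, and a known formula for $B_1(n+1)$. One can either cite the formula for $B_g(n)$ from~\cite{Goupil-Schaeffer:n-cycles} (as the excerpt does just above the corollary, where $C_1(n)=\Cat(n)\frac{(2n-1)!}{6(n-2)!(n-1)!}$ is recorded), or use the earlier computation $C_1(n) = \Cat(n)\frac{(2n-1)!}{6(n-2)!(n-1)!}$ directly, so that $B_1(n+1)=\frac{(2n-1)!}{6(n-2)!(n-1)!}$.

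Putting this together, $T_1(n)=\frac12\cdot\frac{(2n)!}{n!\,(n+1)!}\cdot\frac{(2n-1)!}{6\,(n-2)!\,(n-1)!}=\frac{(2n)!(2n-1)!}{12\,(n+1)!\,n!\,(n-1)!\,(n-2)!}$, which is the claimed expression. The only genuine content beyond bookkeeping is the fixed-point-free involution argument of the first paragraph; the rest is substitution. The main (mild) obstacle is making sure the parity/genus split is airtight — i.e.\ that on the torus the two cases "submap is a tree" and "dual submap is a tree" are disjoint and exhaustive, which is exactly what $g(M_{|S})+g(M^*_{|\bS})=1$ with nonnegative integer genera guarantees — and citing the correct closed form for $B_1$. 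No new lemmas are needed.
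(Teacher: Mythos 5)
Your proof is correct and follows essentially the same route as the paper: the duality involution of Corollary~\ref{cor:dual-covered} together with $g(M_{|S})+g(M^*_{|\bS})=1$ splits toroidal covered maps evenly into tree-rooted maps and duals of tree-rooted maps, giving $T_1(n)=\tfrac12 C_1(n)$, after which one substitutes $C_1(n)=\Cat(n)B_1(n+1)$ with the closed form for $B_1$ cited from~\cite{Goupil-Schaeffer:n-cycles}. The arithmetic checks out, so nothing is missing.
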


Another enumerative byproduct of our bijection is a relation between the numbers of general and bipartite unicellular maps.
Indeed, by comparing the expression of $C^{v,f}(n)$ obtained by the shuffle approach (Equation~\eqref{eq:covered=shuffle2}) with the one of Theorem~\ref{thm:allcovered}, we obtain the following:
\begin{theorem}
The numbers of bipartite and monochromatic unicellular maps are related by the
formula:
\begin{eqnarray}
\label{eq:bipartitefrommono}
B^{v,f}(n+1) = \sum_{n_1+n_2=n} \frac{n!(n+1)!}{(2n_1)!(2n_2)!} A^v(n_1)
A^f(n_2)
\end{eqnarray}
Moreover, this formula establishes the equivalence between the formula~\eqref{eq:HarerZagier} due to Harer and Zagier and  the formula~\eqref{eq:JacksonAdrianov} due to Jackson.
\end{theorem}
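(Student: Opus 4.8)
The statement has two parts: the identity \eqref{eq:bipartitefrommono} itself, and the claim that it links \eqref{eq:HarerZagier} to \eqref{eq:JacksonAdrianov}. The first part is immediate from the two enumerations of covered maps already at our disposal. The bijection $\Psi$ gives, through Theorem~\ref{thm:allcovered}, the formula $C^{v,f}(n)=\Cat(n)\,B^{v,f}(n+1)$, while the shuffle encoding gives, through Equation~\eqref{eq:covered=shuffle2}, the formula $C^{v,f}(n)=\sum_{m}\binom{2n}{2m}A^v(m)A^f(n-m)$. Equating these two expressions, dividing by $\Cat(n)=\frac{(2n)!}{n!(n+1)!}$, and expanding $\binom{2n}{2m}=\frac{(2n)!}{(2m)!(2n-2m)!}$ makes the factor $(2n)!$ cancel and produces exactly \eqref{eq:bipartitefrommono} (with $n_1=m$, $n_2=n-m$). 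No further work is needed for this part.

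For the equivalence I would first recast \eqref{eq:bipartitefrommono} as a product of generating functions. Dividing \eqref{eq:bipartitefrommono} by $n!(n+1)!$ exhibits its right-hand side as a Cauchy product in the normalized series $\hat A^v(t)=\sum_{n_1}\frac{A^v(n_1)}{(2n_1)!}t^{2n_1}$; concretely, \eqref{eq:bipartitefrommono} is equivalent to $\hat B^{v,f}(t)=\hat A^v(t)\hat A^f(t)$ for all $v,f$, where $\hat B^{v,f}(t)=\sum_n\frac{B^{v,f}(n+1)}{n!(n+1)!}t^{2n}$. Summing against $y^vz^f$ and setting $\mathcal A(y,t)=\sum_v\hat A^v(t)y^v$ and $\mathcal B(y,z,t)=\sum_{v,f}\hat B^{v,f}(t)y^vz^f$, this becomes the clean product $\mathcal B(y,z,t)=\mathcal A(y,t)\,\mathcal A(z,t)$. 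Up to the explicit factorial normalizations, \eqref{eq:HarerZagier} is a closed form for $\sum_vA^v(n)y^v=(2n)!\,[t^{2n}]\mathcal A(y,t)$ and \eqref{eq:JacksonAdrianov} is a closed form for $\sum_{v,f}B^{v,f}(n+1)y^vz^f=n!(n+1)!\,[t^{2n}]\mathcal B(y,z,t)$, so the asserted equivalence is exactly the statement that the two closed forms correspond under the product above.

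The forward implication (\eqref{eq:HarerZagier}$\Rightarrow$\eqref{eq:JacksonAdrianov}) is then a direct substitution at fixed $n$. Multiplying \eqref{eq:bipartitefrommono} by $y^vz^f$, summing over $v,f$, and factoring, one gets $\sum_{v,f}B^{v,f}(n+1)y^vz^f=\sum_{n_1+n_2=n}\frac{n!(n+1)!}{(2n_1)!(2n_2)!}\big(\sum_v A^v(n_1)y^v\big)\big(\sum_f A^f(n_2)z^f\big)$, into which I would plug the Harer--Zagier expressions for the two inner sums. The prefactors combine to $\frac{n!(n+1)!}{2^n\,n_1!\,n_2!}$, and the combinatorial core is the evaluation
$$\sum_{n_1+n_2=n}\frac{1}{n_1!\,n_2!}\binom{n_1}{i-1}\binom{n_2}{j-1}=\frac{2^{\,n-i-j+2}}{(i-1)!\,(j-1)!\,(n-i-j+2)!},$$
which follows by rewriting $\frac{1}{n_1!}\binom{n_1}{i-1}=\frac{1}{(i-1)!\,(n_1-i+1)!}$ (and similarly for $n_2$) and summing the resulting binomial. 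After this step every power of $2$ cancels and the multinomial coefficient $\binom{n}{i-1,j-1}=\frac{n!}{(i-1)!\,(j-1)!\,(n-i-j+2)!}$ emerges, turning the expression precisely into the right-hand side of \eqref{eq:JacksonAdrianov}.

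For the converse (\eqref{eq:JacksonAdrianov}$\Rightarrow$\eqref{eq:HarerZagier}) I would argue that the assignment $\mathcal A\mapsto\mathcal B=\mathcal A(y,t)\mathcal A(z,t)$ is injective on formal power series in $t$. Since the only unicellular map with no edge is a single vertex, $A^1(0)=1$ and $A^v(0)=0$ for $v\neq1$, so $\hat A^1(t)=1+O(t^2)$ has invertible constant term; consequently $\hat A^1(t)$ is the unique power-series square root with constant term $1$ of $[y^1z^1]\mathcal B(y,z,t)=\hat A^1(t)^2$, and then $\mathcal A(y,t)=[z^1]\mathcal B(y,z,t)/\hat A^1(t)$ recovers $\mathcal A$ from $\mathcal B$. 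Thus the closed form of $\mathcal B$ forces that of $\mathcal A$, giving \eqref{eq:HarerZagier}. The main obstacle I expect is the forward computation: carefully bookkeeping the powers of $2$ coming from the two Harer--Zagier factors and from the binomial sum so that they all cancel, and correctly identifying the resulting multinomial coefficient; the invertibility argument for the converse is comparatively routine once the product form $\mathcal B=\mathcal A\cdot\mathcal A$ has been established.
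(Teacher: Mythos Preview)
Your proof is correct and follows essentially the same route as the paper: the identity \eqref{eq:bipartitefrommono} is obtained by equating \eqref{eq:covered=shuffle2} with \eqref{eq:allcovered}, and the forward implication is the same direct substitution and binomial summation. The only notable difference is in the converse: you recover $\mathcal A$ from $\mathcal B$ by extracting the coefficient of $z^1$ and dividing by the invertible series $\hat A^1(t)$, whereas the paper argues that for each fixed $p$ the diagonal values $B^{p,p}(n+1)$ determine $A^p(n)$ by induction on $n$ (equivalently, a power series with nonnegative coefficients is determined by its square); both are routine injectivity arguments for the convolution, and yours is arguably the cleaner of the two.
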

\begin{proof}
The first statement is obtained by comparing  Equations~\eqref{eq:covered=shuffle2} and~\eqref{eq:allcovered}.
We now show that~\eqref{eq:HarerZagier} implies~\eqref{eq:JacksonAdrianov}. Using \eqref{eq:HarerZagier} one gets:
\begin{eqnarray*}
& &\sum_{p,q\geq 1} B^{p,q}(n+1) y^p z^q \\
&\stackrel{Eq.~\eqref{eq:bipartitefrommono}}{=}&
\sum_{p,q\geq 1} \sum_{n_1+n_2=n}
\frac{n!(n+1)!}{(2n_1)!(2n_2)!} A^p(n_1)
A^q(n_2)y^pz^q \\
&=&
\sum_{n_1+n_2=n}
\frac{n!(n+1)!}{(2n_1)!(2n_2)!}
\left( \sum_{p\geq 1} A^p(n_1) y^p  \right)
\left( \sum_{q\geq 1} A^q(n_2) z^q  \right) \\
&\stackrel{Eq.~\eqref{eq:HarerZagier}}{=}&
\sum_{n_1+n_2=n}
\frac{n!(n+1)!}{2^nn_1!n_2!}
\sum_{i,j \geq 1} 2^{i+j-2}
{n_1 \choose i-1}
{n_2 \choose j-1}
{y \choose i}
{z \choose j} \\
&=&
\sum_{i,j \geq 1}
\frac{2^{i+j-n-2}n!(n+1)!}{(i-1)!(j-1)!}{y \choose i}{z \choose j}
\!\sum_{n_1+n_2=n\atop n_1\geq i-1,\,n_2\geq j-1}\frac{1}{(n_1-i+1)!(n_2-j+1)!}\\
\end{eqnarray*}
where the second and fourth equalities just correspond to  rearrangements of the summations. Moreover, the inner sum in the last equation is equal to $2^{n-i-j-2}/(n-i-j+2)!$ by Newton's binomial theorem. This gives Jackson's formula~\eqref{eq:JacksonAdrianov}.\\ 
Conversely, observe that the numbers $A^p(n)$ are uniquely determined by the value of the sums $\sum_{n_1+n_2=n}
\frac{n!(n+1)!}{(2n_1)!(2n_2)!} A^p(n_1)A^p(n_2)$ for all $p,n$ (by induction on $n$). Thus, the calculations above show that Jackson's formula~\eqref{eq:JacksonAdrianov} implies  the Harer-Zagier formula~\eqref{eq:HarerZagier}.
\end{proof}

\titre{Remark.} Connoisseurs know that formulas \eqref{eq:HarerZagier} and \eqref{eq:JacksonAdrianov} can be interpreted in terms of unicellular maps with colored vertices (see e.g.~\cite[sec. 3.2.7]{Lando-zvonkin}). For those readers, we point out that the equivalence between Harer-Zagier and Jackson's formulas can be seen in terms of colorings as well:  $\Psi$ is a bijection between shuffles of two unicellular maps with vertices colored using all colors respectively in $\{1,\ldots,i\}$ and $\{1,\ldots,j\}$ and pairs made of a plane tree and a unicellular map with black and white vertices colored using all colors respectively in $\{1,\ldots,i\}$ and $\{1,\ldots,j\}$.\\


\section{Proofs and inverse bijection.} \label{sec:proofs}
This section is devoted to the proof of Theorems~\ref{thm:Delta} and~\ref{thm:bij} concerning respectively the orientation and unfolding steps of the bijection~$\Psi$.

\subsection{Proofs concerning the orientation step.}
In this Subsection, we prove Theorem~\ref{thm:Delta} about the orientation step $\Delta$ and and define the inverse mapping $\Gamma$. Given an oriented map $(M,(I,O))$ with vertex-permutation $\si$ and face-permutation $\phi$, the \emph{backward function} $\beta$ is defined by $\beta(h)=\si(h)$ if $h\in O$ and $\beta(h)=\phi(h)$ otherwise.  We point out that the backward function is not a permutation, since $\beta(h)=\beta(\al(h))$ for any half-edge $h$.

\begin{lemma}\label{lem:equiv-left-connected}
Let  $(M,(I,O))$ be an oriented map with root $h_0$ and let $\be$ be the backward function. The oriented map $(M,(I,O))$ is left-connected if and only if for any half-edge, there exists an integer $q>0$ such that  $\beta^{q}(h)=h_0$.
\end{lemma}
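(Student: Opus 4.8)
The plan is to prove the two implications separately, the whole argument resting on one elementary identity: for $h\in I$ one has $\beta^{q}(h)=\sigma^{q}(\alpha(h))$ as soon as $\sigma^{p}(\alpha(h))\in O$ for all $p=0,\dots,q-1$. This follows by induction on $q$ straight from the definition of $\beta$ (the first step uses $h\in I$, so $\beta(h)=\phi(h)=\sigma\alpha(h)$; each later step uses that the current iterate lies in $O$, so $\beta$ acts there as $\sigma$), and it is exactly the fact that one step of a left-path corresponds to a maximal run of $\beta$ through outgoing half-edges. I will also use the observation recorded just before the lemma, $\beta(h)=\beta(\alpha(h))$, whence $\beta^{q}(h)=\beta^{q}(\alpha(h))$ for all $q\ge 1$; this lets me restrict attention to ingoing half-edges, since the case of an outgoing $h$, and the case $h=h_{0}$ when $h_{0}\in O$, both reduce to that of $\alpha(h)\in I$.

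For ``left-connected $\Rightarrow$ the $\beta$-condition'', fix $h\in I$ and let $h_{0},h_{1},\dots,h_{k}=h$ be a left-path with extremity $h$ (here $k\ge 1$). For each $i\in\{1,\dots,k\}$ the left-path condition provides $q_{i}>0$ with $\sigma^{p}(\alpha(h_{i}))\in O$ for $p<q_{i}$ and $h_{i-1}=\sigma^{q_{i}}(\alpha(h_{i}))$, which by the identity above reads $h_{i-1}=\beta^{q_{i}}(h_{i})$. Composing these equalities from $i=k$ down to $i=1$ gives $h_{0}=\beta^{q_{1}+\cdots+q_{k}}(h)$ with $q_{1}+\cdots+q_{k}\ge 1$, as required; when $h_{0}\in I$ one runs the same computation on a left-path returning to $h_{0}$.

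For the converse I argue by strong induction on the least integer $q(h)>0$ with $\beta^{q(h)}(h)=h_{0}$, assumed to exist for every half-edge. Fix $h\in I$ and let $j^{*}\ge 1$ be minimal with $\beta^{j^{*}}(h)\in I\cup\{h_{0}\}$; this exists and $j^{*}\le q(h)$ since $h_{0}$ is itself reached along the orbit. Minimality forces $\beta^{1}(h),\dots,\beta^{j^{*}-1}(h)\in O$, so the identity above yields $\beta^{j}(h)=\sigma^{j}(\alpha(h))$ for $1\le j\le j^{*}$, together with $\sigma^{p}(\alpha(h))\in O$ for $0\le p<j^{*}$. If $\beta^{j^{*}}(h)=h_{0}$, then $(h_{0},h)$ is a left-path with extremity $h$ and we are done. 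Otherwise $h':=\beta^{j^{*}}(h)\in I$, and the relations just obtained exhibit $h'$ as a legitimate left-path predecessor of $h$; since $\beta^{q(h)-j^{*}}(h')=h_{0}$ with $0<q(h)-j^{*}<q(h)$, the induction hypothesis furnishes a left-path with extremity $h'$, which we prolong by $h$. Hence every ingoing half-edge is the extremity of a left-path, i.e.\ $(M,(I,O))$ is left-connected.

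The argument is in essence a bookkeeping exercise and I anticipate no real obstacle. The only points requiring a little care are the status of the root $h_{0}$ (ingoing or outgoing), dealt with via $\beta^{q}(h)=\beta^{q}(\alpha(h))$, and the well-foundedness of the induction in the converse, dealt with by tracking the integer $q(h)$ and checking $j^{*}<q(h)$ in the recursive case.
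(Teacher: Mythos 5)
Your proposal is correct and follows essentially the same route as the paper: the forward implication rests on the same identity ($h_{i-1}=\sigma^{q_i}(\alpha(h_i))=\beta^{q_i}(h_i)$ along a left-path), and your converse merely formalizes, via strong induction on the hitting time $q(h)$, the paper's direct observation that the ingoing half-edges appearing in the orbit $\beta^{q-1}(h),\ldots,\beta(h),h$ form a left-path. The extra care you take with the status of the root $h_0$ and with the bound $j^*<q(h)$ in the recursive case is sound and fills in details the paper leaves implicit.
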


\begin{proof} 
Suppose first that $(M,(I,O))$ is left-connected. Let $h$ be a half-edge. If $h$ is ingoing, then it is the extremity of a left path  $h_1,\ldots,h_k=h$. By definition of left-paths, there exist positive integers $q_1,\ldots,q_k$ such that $h_{i-1}=\beta^{q_i}(h_i)$ for all $i=1\ldots k$. Hence, $\be^q(h)=h_0$ for $q=q_1+\cdots+q_k$. Now, if $h$ is outgoing, $\be(h)=\beta(\al(h))$, hence there exists $q>0$ such that $h_0=\be^q(\al(h))=\be^q(h)$.

Suppose conversely that for any half-edge $h$, there exists an integer $q>0$ such that  $\beta^{q}(h)=h_0$. In this case, for any ingoing half-edge $h$, the sequence $h_1,h_2,\ldots,h_k=h$ of ingoing half-edges appearing (in this order) in the sequence $\beta^{q-1}(h),\beta^{q-2}(h),\ldots,\beta(h),h$ is a left-path. Hence $(M,(I,O))$ is left-connected.
\end{proof}

\begin{proposition}\label{prop:returnsleftconnected}
The image of any covered map by the mapping $\Delta$ is left-connected.
\end{proposition}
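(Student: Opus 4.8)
The plan is to use the characterization of left-connectedness provided by Lemma~\ref{lem:equiv-left-connected}: it suffices to show that for the oriented map $(M,(I,O)) = \Delta(M,S)$ with root $h_0=r$, every half-edge $h$ satisfies $\beta^q(h)=r$ for some $q>0$, where $\beta$ is the backward function. The key observation is that the backward function $\beta$ of $\Delta(M,S)$ should be closely related to the inverse $\theta^{-1}$ of the motion function $\theta$ of the covered map $(M,S)$, which we know is cyclic by Proposition~\ref{prop:covered=cyclic}.

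First I would unwind the definitions. By definition of $\Delta$, a half-edge $h\in S$ is ingoing iff $\alpha(h)\lS h$ (equivalently $h = \theta(\alpha(h))\lS^{\ast}\cdots$, i.e.\ $h$ comes after $\alpha(h)$ in appearance order), and a half-edge $h\notin S$ is ingoing iff $h\lS\alpha(h)$. Recall also $\theta^{-1}(h)=\alpha\sigma^{-1}(h)$ if $\sigma^{-1}(h)\in S$ and $\theta^{-1}(h)=\sigma^{-1}(h)$ otherwise. I would then compare this with $\beta$, which is $\beta(h)=\sigma(h)$ if $h\in O$ and $\beta(h)=\phi(h)=\sigma\alpha(h)$ if $h\in I$. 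The plan is to show that $\beta(h)$ and $\theta^{-1}$ agree up to the identification $h\sim\alpha(h)$ forced by $\beta(h)=\beta(\alpha(h))$; more precisely, I expect $\beta(h) = \theta^{-1}(h')$ for a suitable choice of $h'\in\{h,\alpha(h)\}$, or equivalently that iterating $\beta$ traces out (a quotient of) the unique cycle of $\theta^{-1}$. Concretely: if $h\in O$, I claim $\alpha(h)$ is the half-edge for which the predecessor in the $\theta$-tour is relevant; writing $g=\sigma^{-1}(\sigma(h))$-type manipulations, I would check that $\beta(h)=\sigma(h)$ equals $\theta^{-1}$ applied to the appropriate representative, using the orientation rule to control whether $\sigma(h)$ or $\alpha\sigma(h)$ lies in $S$.

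Once $\beta$ is identified with the motion along $\theta^{-1}$ on equivalence classes $\{h,\alpha(h)\}$, the conclusion is immediate: since $\theta$ is cyclic on $H$ (Proposition~\ref{prop:covered=cyclic}), so is $\theta^{-1}$, hence starting from any $h$ and iterating $\beta$ one reaches every $\theta^{-1}$-class, in particular the class of $r=h_0$; and since $r$ is (say, by the rooting convention) the first element of its tour, $\beta^q(h)=h_0$ for some $q>0$. An alternative, more topological route I would keep in reserve: argue directly that iterating $\beta$ from any half-edge traces the tour of the submap $M_{|S}$ \emph{backwards}, and since $M_{|S}$ is a connecting unicellular map (single face visiting every vertex), this backward tour visits every half-edge and in particular passes through the root-corner.

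\textbf{Main obstacle.} The delicate point is the bookkeeping in the identification $\beta\leftrightarrow\theta^{-1}$: one must handle carefully the two cases $h\in S$ vs.\ $h\notin S$ together with the two cases $\sigma^{-1}(\cdot)\in S$ vs.\ $\notin S$, and verify that the orientation rule defining $I,O$ is exactly what makes the case distinctions in $\beta$ and in $\theta^{-1}$ line up. I expect this to require a short lemma (or an explicit four-case check) showing that for $h\in O$ one has $\beta(h) = \theta^{-1}(h)$ when $\sigma^{-1}$ of the image stays outside $S$ and $\beta(h)=\theta^{-1}(\alpha(h))$ otherwise (and symmetrically for $h\in I$), after which transitivity of $\theta$ on $H$ finishes the proof.
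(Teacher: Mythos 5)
Your overall strategy (invoke Lemma~\ref{lem:equiv-left-connected}, then relate the backward function $\be$ to the motion function $\theta$ by a four-case check driven by the orientation rule) is the same as the paper's, but there are two genuine problems with the way you propose to carry it out. First, your identification goes in the wrong direction: $\be$ advances along $\theta$, not along $\theta^{-1}$. Concretely, if $h\in O\cap\bS$ then $\be(h)=\si(h)=\theta(h)$; if $h\in O\cap S$ then $\be(h)=\si(h)=\theta(\al(h))$; if $h\in I\cap\bS$ then $\be(h)=\si\al(h)=\theta(\al(h))$; and if $h\in I\cap S$ then $\be(h)=\si\al(h)=\theta(h)$. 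So in every case $\be(h)=\theta(h'')$ with $h''\in\{h,\al(h)\}$, whereas the relation $\be(h)=\theta^{-1}(h')$ announced in your ``main obstacle'' paragraph fails already for $h\in O\cap\bS$, where it would require $\theta^2(h)=h$ or $\theta^2(h)=\al(h)$.

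Second, and more seriously, even with the corrected identification your concluding step is a non sequitur. Because $\be$ applies $\theta$ to a representative of $\{h,\al(h)\}$ that changes from step to step, iterating $\be$ does not trace out the cycle of $\theta$ (forwards or backwards): it jumps along that cycle by variable amounts and skips half-edges, so it certainly does not ``reach every class.'' Cyclicity of $\theta$ alone cannot force the iteration to reach $r$ --- for an arbitrary orientation the analogous iteration can get trapped in a loop avoiding $r$, which is precisely why not every orientation is left-connected. The missing idea is a monotonicity invariant: the rule defining $\Delta$ guarantees that in the two cases where $\be(h)=\theta(\al(h))$ one has $h\lS\al(h)$, so that in all four cases $h\lS\be(h)$ whenever $\be(h)\neq r$ (using that $\theta^{-1}(x)\lS x$ for every $x\neq r$, by definition of the appearance order). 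Finiteness of the total order $\lS$ then forces $\be^q(h)=r$ for some $q>0$. This inequality is the heart of the paper's proof, and your plan does not identify it.
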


\begin{proof} Let $(M,S)$ be a covered map, where the map $M=(H,\si,\al)$ has root $r$, and  let $(M,(I,O))$ be its image by $\Delta$. Our strategy is to prove that for any half-edge $h$ such that  $\beta(h)\neq r$, one has $h\lS\beta(h)$. This will clearly prove that the sequence $\beta(h), \beta^2(h), \beta^3(h)\ldots$ must contain the root $r$, and by Lemma~\ref{lem:equiv-left-connected}, that $(M,(I,O))$ left-connected. 

We distinguish four cases, depending on the fact that $h$ is in $I$ or $O$, and in $S$ or $\bS$. In these four cases, we denote $h'$ the half-edge $\theta^{-1}(\beta(h))$, where $\theta$ is the motion function of $(M,S)$. Observe that $h'\lS \be(h)$ since by hypothesis $\be(h)\neq r$.\\
{\bf{Case 1:}} $h$ is in $O$ and in $\bS$. In this case, one has $\beta(h)=\si(h)=\theta(h)$, hence $h'=h$. Thus $h=h'\lS \beta(h)$.\\
{\bf{Case 2:}} $h$ is in $O$ and in $S$. In this case,  one has $\beta(h)=\si(h)=\theta(\al(h))$, hence $h'=\al(h)$. Moreover, by definition of  $\Delta$,  one has $h\lS \al(h)$ thus $h\lS h' \lS \beta(h)$.\\
{\bf{Case 3:}} $h$ is in $I$ and in $\bS$. In this case,  one has $\beta(h)=\si(\al(h))=\theta(\al(h))$, hence $h'=\al(h)$. Moreover, by definition of  $\Delta$,  one has $h\lS \al(h)$, thus $h\lS h' \lS \beta(h)$.\\
{\bf{Case 4:}} $h$ is in $I$ and in $S$. In this case, one has $\beta(h)=\si\al(h)=\theta(h)$, hence $h'=h$. Thus, $h=h'\lS \beta(h)$. 
\end{proof}

We will now define a mapping $\Gamma$ that we will prove to be the inverse of $\Delta$. Let us first give the intuition behind the injectivity of $\Delta$ by considering a covered map $(M,S)$ with motion function $\theta$ and its image $(M,(I,O))$ by $\Delta$. Observe from the definition of $\Delta$ that the root $r$ of $M$ is in $S$ if and only if it is in $O$. Thus, it is possible to know from the orientation $(I,O)$ whether $r$ belongs to $S$ or not, and thereby deduce the next half-edge $h_1=\theta(r)$ around the submap $\MS$. The same reasoning will allow to determine, from the orientation $(I,O)$, whether the half-edge $h_1$ belongs to $S$ and deduce the next half-edge $h_2=\theta(h_1)$ around $\MS$ and so on\ldots This should convince the reader that the mapping $\Delta$ is injective and highlight the definition of $\Gamma$ given below.\\
 
It is convenient to define $\Gamma$ as a procedure which given an oriented map $(M,(I,O))$ with root $r$ returns a subset $S$ of half-edges. This procedure visit some half-edges of $M$ starting from the root $r$, and decide at each step whether the current half-edge $h$ belongs to the set $S$ or not. 

\begin{definition}\label{def:Gamma}
The mapping $\Gamma$ associates to an oriented map $(M,(I,O))$ the spanned map obtained by the following procedure.\\
\titre{Initialization:} Set $S=\emptyset$, $R=\emptyset$ and set the current half-edge $h$ to be the root $r$.\\
\titre{Core:}\\ 
\indent$\bullet$  If $h\notin S\cup R$ do:\\
\indent \indent If $h$ is in $O$ then add $h$ and $\al(h)$ to $S$; otherwise add $h$ and $\al(h)$ to $R$.\\
\indent$\bullet$ Set the the current half-edge $h$ to be $\si\al(h)$ if $h$ in $S$ and $\si(h)$ otherwise. \\  
Repeat until the current half-edge $h$ returns to be $r$.\\
\titre{End:} Return the spanned map $(M,S)$.
\end{definition}

We first prove the termination of the procedure $\Gamma$.

\begin{lemma}
\label{lemma:defGamma}
For any oriented map $(M,(I,O))$, the procedure $\Gamma$ terminates and returns a spanned map $(M,S)$.
Moreover, the list of all successive current half-edges visited by the procedure is the cycle containing the root $r$ of the motion function~$\theta$ associated to $(M,S)$.
\end{lemma}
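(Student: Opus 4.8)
The plan is to show that the procedure $\Gamma$ is a faithful ``replay'' of one $\theta$-cycle, where $\theta$ is the motion function of the spanned map $(M,S)$ that $\Gamma$ itself produces. The subtlety is of course that $S$ is being built incrementally, so I must argue that the decisions made by $\Gamma$ about membership in $S$ are never revised and are consistent with the final $\theta$.

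\textbf{Termination.} The procedure moves the current half-edge by $h\mapsto \si\al(h)$ or $h\mapsto \si(h)$; in either case the image is determined by $h$ together with the current status of $h$ (in $S$, in $R$, or undecided). The key observation is that once a half-edge $h$ has been visited and placed in $S$ or $R$, its status never changes (the ``If $h\notin S\cup R$'' guard only ever \emph{adds} to $S$ or $R$, and $\al(h)$ is added simultaneously with $h$). Hence the ``next half-edge'' rule, once a half-edge is decided, is a fixed deterministic function of that half-edge. I would first check that the sequence of current half-edges cannot enter a cycle avoiding $r$: if $h_0=r, h_1, h_2,\ldots$ is the sequence of current half-edges, and some $h_k$ with $k\ge 1$ equals an earlier $h_j$ with $1\le j\le k$, then since the step rule at each already-decided half-edge is deterministic and time-independent, the sequence would have been periodic already from $h_j$, forcing $h_{j-1}$ to map to $h_j$; tracing back, $h_{j-1}$ would have to equal some $h_{j'-1}$, and iterating we reach $h_0=r$ inside the would-be cycle, contradiction unless $h_j = r$. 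Since $H$ is finite, the sequence must return to $r$, so the procedure halts. (Alternatively, and more cleanly, I would run the argument below which identifies the visited half-edges with a $\theta$-orbit, and termination follows because $\theta$-orbits are finite.)

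\textbf{Identification with the $\theta$-cycle.} Let $(M,S)$ be the spanned map returned at the end, and let $\theta$ be its motion function, $\theta(h)=\si\al(h)$ for $h\in S$ and $\theta(h)=\si(h)$ for $h\notin S$. I claim that each successive current half-edge $h_{m+1}$ equals $\theta(h_m)$. This requires knowing, at the moment $\Gamma$ processes $h_m$, whether $h_m\in S$ in the \emph{final} $S$. The point is: when $\Gamma$ reaches a half-edge $h_m$, either $h_m$ has never been visited before — in which case $\Gamma$ decides its membership on the spot according to the orientation ($h_m\in O\Rightarrow h_m\in S$, else $h_m\in R$), and this decision is permanent so it agrees with the final $S$ — or $h_m$ has been visited before, in which case its membership was already fixed. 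In both cases the step taken, $h_{m+1}=\si\al(h_m)$ if $h_m\in S$ and $h_{m+1}=\si(h_m)$ otherwise, coincides with $\theta(h_m)$. (One small thing to verify: a half-edge $h$ is added to $S$ exactly when it is \emph{visited while $h\in O$}; but it could also be added to $S$ as the partner $\al(h)$ of a previously visited outgoing half-edge, and then later $h$ itself is visited — in that case $h\in I$ but $h\in S$, and I must check $\Gamma$'s rule still uses ``$h$ in $S$'' to pick the next half-edge, which it does, since the guard ``$h\notin S\cup R$'' fails and we fall through to ``Set the current half-edge to $\si\al(h)$ if $h$ in $S$''.) Hence the list of current half-edges is exactly $r,\theta(r),\theta^2(r),\ldots$ until it cycles back to $r$, i.e. it is the $\theta$-orbit of $r$, which is finite; this re-proves termination.

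\textbf{$(M,S)$ is a spanned map.} Finally I must check $S$ is stable under $\al$: this is immediate since $h$ and $\al(h)$ are always added to $S$ (or to $R$) simultaneously. So $(M,S)$ is a genuine spanned map and the lemma follows. The main obstacle, as indicated above, is the bookkeeping around half-edges whose $S$-membership is decided ``in advance'' via their $\al$-partner before they are themselves visited; I would make sure the statement ``the decision $\Gamma$ records for a half-edge at visit time agrees with the final $S$'' is airtight by noting that $S$ and $R$ only grow and are disjoint (a half-edge placed in $S$ is never in $R$ and vice versa, because being in $S$ means its partner was outgoing when first decided, being in $R$ means it or its partner was ingoing when first decided, and the orientation of an edge is fixed).
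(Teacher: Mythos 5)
Your proposal is correct and follows essentially the same route as the paper: both arguments rest on the observation that $S$ and $R$ only grow and stay disjoint and $\alpha$-stable, so each core step applies the motion function of the eventual spanned map, making the list of visited half-edges exactly the $\theta$-cycle of $r$. The only (cosmetic) difference is that the paper introduces the limit sets $S_\infty,R_\infty$ and the permutation $\theta_\infty$ up front, which yields termination and the cycle identification in one stroke and avoids your separate injectivity/trace-back argument and the slight awkwardness of referring to ``the final $S$'' before termination is established.
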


\begin{proof}
We first prove that the procedure $\Gamma$ terminates. Observe that, at any step of the procedure, the sets $S$ and $R$ are disjoint and stable by $\alpha$. Moreover, these sets are both increasing, hence they are constant after a while, equal to some sets $S_\infty$ and $R_\infty$ which are disjoint. Let  $\theta_\infty$ be the motion function of the submap $M_{|S_\infty}$.  
Then, at each core step of the procedure, the current half-edge $h$ becomes the half-edge $\theta_\infty(h)$. Indeed, if at the current step $h$ is in $S$, then $h$ is in $S_\infty$ (since the set $S$ cannot decrease) so that $\theta_\infty(h)=\sigma\alpha(h)$, while if $h$ is in $R$, then $h$ is in $R_\infty$ (since the set $R$ cannot decrease), hence it is not in $S_\infty$ so that $\theta_\infty(h)=\sigma(h)$. 

Hence the sequence of all successive current half-edges form a cycle of the \emph{permutation} $\theta_\infty$. Since the procedure starts with $h$ equal to the root $r$, it follows that $r$ is reached a second time, and that the procedure terminates. Finally, the spanned map returned by the algorithm is $(M,S_\infty)$, which concludes the proof.
\end{proof}

\begin{proposition} \label{prop:returnscovered}
The image of a  left-connected map  by the mapping $\Gamma$ is a covered map.
\end{proposition}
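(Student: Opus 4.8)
The plan is to show that $\Gamma$ is the inverse of $\Delta$ on left-connected maps, from which the statement follows since $\Delta$ has covered maps as its domain and, by Proposition~\ref{prop:returnsleftconnected}, maps into left-connected maps bijectively onto them (once injectivity and surjectivity are established, which is precisely the content of Theorem~\ref{thm:Delta}). More precisely, I would argue that for a left-connected map $(M,(I,O))$, the spanned map $(M,S)=\Gamma(M,(I,O))$ is a covered map, and then (in the proof of Theorem~\ref{thm:Delta}) that $\Delta(M,S)=(M,(I,O))$.

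The key steps, in order, are as follows. First, apply Lemma~\ref{lemma:defGamma}: the procedure terminates, returns a spanned map $(M,S)$, and the sequence of current half-edges visited is exactly the cycle of the motion function $\theta$ of $(M,S)$ containing the root $r$. Thus what must be shown is that this cycle is \emph{all} of $H$, i.e. that $\theta$ is cyclic; by Proposition~\ref{prop:covered=cyclic}, this is equivalent to $(M,S)$ being covered. Second, I would use left-connectedness, via Lemma~\ref{lem:equiv-left-connected}, to reach every half-edge: for any $h\in H$ there is $q>0$ with $\beta^q(h)=h_0=r$, where $\beta$ is the backward function of $(M,(I,O))$. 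The heart of the argument is then to relate $\beta$ to $\theta^{-1}$ on the set $S_\infty$ of half-edges that get classified by the procedure. Concretely, I would show that for any half-edge $h$ that is \emph{visited} by the procedure (equivalently, lies in the cycle of $\theta$ through $r$), its predecessor structure under $\beta$ forces $h$ (or $\alpha(h)$) to also be visited: running the four-case analysis exactly as in the proof of Proposition~\ref{prop:returnsleftconnected} but in reverse, one checks that $\theta^{-1}(\beta(h))$ is either $h$ or $\alpha(h)$ depending on the $I/O$ and $S_\infty/\overline{S_\infty}$ status, so that $\beta(h)$ visited implies $h$ visited. Iterating, since $\beta^q(h)=r$ is visited, $h$ is visited. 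Hence every half-edge lies in the cycle of $\theta$ through $r$, so $\theta$ is cyclic and $(M,S)$ is a covered map by Proposition~\ref{prop:covered=cyclic}.

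The main obstacle I expect is bookkeeping the classification of half-edges correctly: the procedure only labels a half-edge $h$ (and $\alpha(h)$) as being in $S$ or $R$ the \emph{first time} it is visited, and the decision uses whether $h\in O$ or $h\in I$ at that first visit — but a half-edge can be approached from either end of its edge. One must be careful that the set $S_\infty$ really does coincide with the "$S$ determined by the orientation" in the way that makes $\theta$ and $\beta$ compatible, and that the four-case analysis (half-edge in $I$ vs $O$, in $S_\infty$ vs not) correctly identifies $\theta^{-1}(\beta(h))\in\{h,\alpha(h)\}$. The subtlety about the root — that $r\in S$ iff $r\in O$, and the handling of $\beta(h)=r$ as a stopping condition — also needs care, paralleling the remark preceding Definition~\ref{def:Gamma}. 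Once these compatibility relations are pinned down, the propagation argument along $\beta$-orbits is routine.
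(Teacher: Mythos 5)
Your plan follows the paper's proof essentially verbatim: invoke Lemma~\ref{lemma:defGamma} to identify the visited half-edges with the cycle $K$ of the motion function $\theta$ through the root, use left-connectedness via Lemma~\ref{lem:equiv-left-connected} to reduce to showing that $\beta(h)\in K$ implies $h\in K$, and establish this by the same four-case analysis on $h\in I$ or $O$ and $h\in S$ or $\bS$, concluding by Proposition~\ref{prop:covered=cyclic}. The ``bookkeeping'' subtlety you flag (when $\theta^{-1}(\beta(h))=\al(h)$, one must argue from the first-visit classification rule of $\Gamma$ that $h$ itself was a current half-edge) is exactly what the paper's Cases 2 and 3 resolve, so your proposal is correct and on the paper's route.
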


\begin{proof}  Let $(M,(I,O))$ be a left-connected map.
We denote by $r$ the root of $M=(H,\si,\al)$, by $\beta$ the backward function of $(M,(I,O))$, and by $\theta$ the motion function of $(M,S)=\Gamma((M,(I,O)))$. Let $K$ be the set of half-edges contained in the cycle of the motion function $\theta$ containing the root $r$. In order to prove the proposition, it suffices to prove that $K=H$. Indeed, in this case the motion function $\theta$ is cyclic which implies that $(M,S)$ is a covered map by Proposition~\ref{prop:covered=cyclic}. 

Moreover, since  $(M,(I,O))$ is left-connected, for any half-edge $h$ in $H$, there exists a positive integer $q$ such that $\beta^q(h)$ is the root $r$ which belongs to $K$. Hence, it suffices to prove that \emph{any half-edge $h$ such that $\beta(h)$ is in $K$, is also in $K$}.

Let $h$ be an half-edge such that $\be(h)$ is in  $K$ and let $h'=\theta^{-1}(\be(h))$. Observe that, by definition of $K$, the half-edge $h'$ is in $K$. We now distinguish four cases, depending on the fact that $h$ is in $I$ or $O$, and in $S$ or $\bS$. \\
{\bf{Case 1:}} $h$ is in $O$ and in $\bS$. In this case, one has $\beta(h)=\si(h)=\theta(h)$, hence $h'=h$. Thus $h=h'$ is in $K$.\\
{\bf{Case 2:}} $h$ is in $O$ and in $S$. In this case, by definition of $\Gamma$, the half-edge $h$ was the current half-edge when it was added to $S$. Thus, by Lemma~\ref{lemma:defGamma}, the half-edge $h$ is in $K$.\\
{\bf{Case 3:}} $h$ is in $I$ and in $\bS$. In this case,  one has $\beta(h)=\si(\al(h))=\theta(\al(h))$, hence $h'=\al(h)$. Since $h'$ is in $K$, Lemma~\ref{lemma:defGamma} ensures that it was the current half-edge at a certain step of the procedure $\Gamma$. Hence, since $h'=\al(h)$ is in $O$ but not in $S$, it means that $h$ and $h'$ were added to the set $R$ at a step of the procedure $\Gamma$, such that $h$ was the current half-edge. Thus,  by Lemma~\ref{lemma:defGamma}, the half-edge $h$ is in $K$.\\
{\bf{Case 4:}} $h$ is in $I$ and in $S$. In this case, one has $\beta(h)=\si\al(h)=\theta(h)$, hence $h'=h$.  Thus $h=h'$ is in $K$.
\end{proof}

We now complete the proof of Theorem~\ref{thm:Delta}.

\begin{proposition}
The mappings $\Delta$ and $\Gamma$ are inverse bijections between covered maps and left-connected maps.
\end{proposition}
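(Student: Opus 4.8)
The plan is to establish that $\Gamma\circ\Delta=\mathrm{id}$ and $\Delta\circ\Gamma=\mathrm{id}$ on the respective domains; combined with Propositions~\ref{prop:returnsleftconnected} and~\ref{prop:returnscovered} (which guarantee the maps land in the right sets) and Lemma~\ref{lemma:defGamma} (which guarantees $\Gamma$ is well-defined), this will prove both maps are mutually inverse bijections. Since $\Delta$ and $\Gamma$ both preserve the underlying map $M$ and only change the decoration (the subset $S$ versus the orientation $(I,O)$), it suffices to check that the decorations are correctly recovered.

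First I would show $\Gamma\circ\Delta=\mathrm{id}$. Fix a covered map $(M,S)$ with motion function $\theta$, and let $(M,(I,O))=\Delta(M,S)$. By Proposition~\ref{prop:covered=cyclic}, $\theta$ is cyclic, so the appearance order $\lS$ visits every half-edge starting from the root $r=h_0$. I would argue by induction along the sequence $r,\theta(r),\theta^2(r),\ldots$ that when $\Gamma$ processes the current half-edge $h=\theta^k(r)$ for the first time (i.e.\ $h\notin S\cup R$ in the notation of Definition~\ref{def:Gamma}), it classifies $h$ correctly: namely $h$ is placed in the output set precisely when $h\in O$, and one checks directly from the definition of $\Delta$ that $h\in S\iff$ ($h\in O$ and $\alpha(h)\lS h$) or ($h\notin O$ wait --- more carefully, $h\in S$ and $h\in O$ iff $\alpha(h)\in I$, and the definition of $\Delta$ says the ingoing half-edges in $S$ are those $h'$ with $\alpha(h')\lS h'$). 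The key point, already sketched in the paragraph preceding Definition~\ref{def:Gamma}, is that the root $r$ lies in $S$ if and only if it lies in $O$, and more generally the very first time $\Gamma$ sees a half-edge $h$ in the $\theta$-orbit of $r$, the pair $\{h,\alpha(h)\}$ has not yet been classified, and the orientation of $h$ determines membership in $S$ by the same rule as $\Delta$. One then checks that the successor computed by $\Gamma$ (namely $\sigma\alpha(h)$ if $h\in S$, else $\sigma(h)$) equals $\theta(h)$, so $\Gamma$ traces out exactly the $\theta$-cycle of $r$, which by cyclicity of $\theta$ is all of $H$; hence every edge gets classified, and the output is exactly $S$. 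This is essentially a transcription of Lemma~\ref{lemma:defGamma} together with the observation $S_\infty=S$.

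Next I would show $\Delta\circ\Gamma=\mathrm{id}$. Take a left-connected map $(M,(I,O))$, set $(M,S)=\Gamma(M,(I,O))$ --- a covered map by Proposition~\ref{prop:returnscovered} --- and let $(M,(I',O'))=\Delta(M,S)$; we must show $(I',O')=(I,O)$. By Lemma~\ref{lemma:defGamma} the list of current half-edges of the $\Gamma$-run is the $\theta$-cycle of $r$, which (since $(M,S)$ is covered) is all of $H$; moreover the appearance order $\lS$ induced by $\theta$ is exactly the order in which $\Gamma$ visits half-edges. For each edge $\{h,\alpha(h)\}$, exactly one of $h,\alpha(h)$ is visited first by $\Gamma$, and $\Gamma$'s decision rule is: put the edge in $S$ iff its first-visited half-edge lies in $O$. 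Now unwind the definition of $\Delta$ on the resulting $(M,S)$: $\Delta$ declares $h$ ingoing iff ($h\in S$ and $\alpha(h)\lS h$) or ($h\notin S$ and $h\lS\alpha(h)$). Case analysis on which of $h,\alpha(h)$ is visited first, and whether the edge went into $S$ or $R$, shows that $\Delta$ reconstructs the original orientation: e.g.\ if the edge was put in $S$ because its first-visited half-edge $h$ lies in $O$, then $\alpha(h)\lS h$ is false, wait --- $h$ is visited first so $h\lS\alpha(h)$, and the ingoing one should be $\alpha(h)$; indeed $\alpha(h)\in I$ since $h\in O$, and $\Delta$'s rule with $\alpha(h)\in S$, $h=\alpha(\alpha(h))\lS\alpha(h)$ makes $\alpha(h)$ ingoing, matching $(I,O)$. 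The remaining three cases are analogous bookkeeping. I expect this case check to be the main obstacle --- not conceptually hard, but it requires carefully tracking the interplay between "visited first by $\Gamma$", membership in $S$ vs.\ $R$, and the $I$/$O$ labels, and it is exactly here that left-connectedness of the input is implicitly used (via Proposition~\ref{prop:returnscovered}, to ensure $\theta$ is cyclic so that every half-edge really is visited and hence classified). Finally, since $\Delta$ and $\Gamma$ are mutually inverse and map covered maps to left-connected maps and back (Propositions~\ref{prop:returnsleftconnected} and~\ref{prop:returnscovered}), they are inverse bijections.
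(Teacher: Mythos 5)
Your proposal is correct and follows essentially the same route as the paper: both compositions are checked by aligning the visit order of $\Gamma$ (via Lemma~\ref{lemma:defGamma}) with the appearance order $\lS$, and observing that for each edge the orientation of its $\lS$-first half-edge determines membership in $S$ under both $\Delta$'s and $\Gamma$'s rules. The only cosmetic difference is that you run a forward induction along the $\theta$-orbit for $\Gamma\circ\Delta=\mathrm{id}$ where the paper argues by a minimal index $k$ at which the motion functions of $S$ and $S'$ first disagree.
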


\begin{proof}
\newcommand{\It}{\tilde{I}}
\newcommand{\Ot}{\tilde{O}}
We first prove that the mapping $\Delta\circ\Gamma$ is the identity on left-connected maps. Observe that this composition is well defined  by Proposition~\ref{prop:returnscovered}. Let $(M,(I,O))$ be a left-connected map and let $(M,S)$ be its image by $\Gamma$. We want to prove that $(M,(\It,\Ot))\equiv \Delta(M,S)$ is equal to $(M,(I,O))$. For that, it suffices to show that any half-edge $h$ such that $h\lS \al(h)$ is in $O$ if and only if it is in $\Ot$. Let $h$ be such a half-edge.  By definition of $\Delta$, it follows that $h$ is in $\Ot$ if and only if $h$ is in $S$. Now, by Lemma~\ref{lemma:defGamma}, the sequence $h_1=r,h_2,\ldots,h_{2n}$ of current half-edge visited during the procedure $\Gamma$ satisfies $h_1\lS h_2\lS \cdots\lS h_{2n}$, hence $h$ is visited before $\al(h)$ during the procedure $\Gamma$. Hence, by definition of $\Gamma$, the half-edge $h$ is in $O$ if and only if $h$ is in $S$.
  
We now prove that  the mapping $\Gamma\circ\Delta$ is the identity on covered maps. Observe that this composition is well defined and returns a covered map by Propositions~\ref{prop:returnsleftconnected} and~\ref{prop:returnscovered}.
 Let $(M,S)$ be a covered map with root $r$ and let $(M,(I,O))$ be its image by $\Delta$. Let also $(M,S')=\Gamma(M,(I,O))$ and let $\theta$ and $\theta'$ be respectively the motion function of $(M,S)$ and $(M,S')$. In order to prove that $S=S'$ it suffices to prove that $\theta=\theta'$ (indeed, the set $S$ and $S'$ are completely determined by $\theta$ and $\theta'$). Suppose now that $\theta\neq \theta'$ and consider the smallest integer $k\geq 0$ such that $\theta^{k+1}(r)\neq {\theta'}^{k+1}(r)$ (such an integer exists since~$\theta$ and~$\theta'$ are cyclic). Observe that for all $0\leq j<k$, the half-edge $\theta^j(r)={\theta'}^j(r)$ is in $S$ if and only if it is in $S'$ (since $\theta^{j+1}(r)={\theta'}^{j+1}(r)$). On the other hand, the half-edge $h=\theta^k(r)={\theta'}^k(r)$ is in the symmetric difference of $S$ and $S'$ (since $\theta^{k+1}(r)\neq {\theta'}^{k+1}(r)$). This implies that $h\lS \al(h)$ and  $h\lSp \al(h)$.
Since $h\lS \al(h)$, the definition of $\Delta$ shows that  the half-edge $h$ is in $S$ if and only if $h$ is in $O$. 
Since $h\lSp \al(h)$, Lemma~\ref{lemma:defGamma} proves that $h$ is the current half-edge before $\al(h)$ in the procedure $\Gamma$ on $(M,(I,O))$. Hence, by definition of $\Gamma$, the half-edge $h$ is in $S'$ if and only if $h$ is in $O$. This proves that $h=\theta^k(r)$ is not in the symmetric difference of $S$ and $S'$, a contradiction.
\end{proof}

Before leaving the world of left-connected maps, we prove the following result.
\begin{lemma}\label{lem:some-ingoing}
If $(M,(I,O))$ is a left-connected map with root $r$, then every non-root vertex of $M$ is incident to a half-edge in $I$ and every non-root face  is incident to a half-edge in $O$. 
\end{lemma}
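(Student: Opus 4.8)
The plan is to use the characterization of left-connectedness via the backward function $\beta$ given in Lemma~\ref{lem:equiv-left-connected}, together with the elementary observation that $\beta$ maps every half-edge into the vertex and face data of $M$ in a controlled way. Recall that $\beta(h)=\si(h)$ if $h\in O$ and $\beta(h)=\phi(h)=\si\al(h)$ if $h\in I$. The key point I would exploit is: whenever $h\in O$, the half-edges $h$ and $\beta(h)=\si(h)$ lie in the \emph{same} cycle of $\si$, i.e. are incident to the same vertex; and whenever $h\in I$, the half-edges $\al(h)$ and $\beta(h)=\si\al(h)$ are incident to the same vertex, while $h$ and $\beta(h)=\phi(h)$ are incident to the same face. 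So $\beta$ "travels within a vertex" on outgoing steps (from the vertex standpoint) and "travels within a face" on ingoing steps (from the face standpoint); but crucially it always stays within a vertex when viewed through $\al$, and always stays within a face directly.

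First I would prove the statement about faces. Let $f$ be a non-root face of $M$, and suppose for contradiction that every half-edge incident to $f$ is in $I$. Pick any half-edge $h$ incident to $f$. Since $h\in I$, $\beta(h)=\phi(h)$ is again incident to $f$ (it is $\si\al(h)$, but as a face step $\phi(h)$ stays in the same cycle of $\phi$), hence again in $I$ by hypothesis, hence $\beta^2(h)=\phi^2(h)$ is incident to $f$, and so on: $\beta^q(h)=\phi^q(h)$ is incident to $f$ for all $q\ge 0$. But by Lemma~\ref{lem:equiv-left-connected}, left-connectedness forces $\beta^q(h)=h_0=r$ for some $q>0$, and $r$ is incident to the root-face, not to $f$ — contradiction. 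Hence $f$ has an incident half-edge in $O$.

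Next I would prove the statement about vertices, which is dual in spirit. Let $v$ be a non-root vertex and suppose every half-edge incident to $v$ is in $O$. Pick $h$ incident to $v$. Since $h\in O$, $\beta(h)=\si(h)$ is incident to $v$, hence in $O$, hence $\beta^2(h)=\si^2(h)$ is incident to $v$, and inductively $\beta^q(h)=\si^q(h)$ is incident to $v$ for all $q\ge0$. Again Lemma~\ref{lem:equiv-left-connected} forces $\beta^q(h)=r$ for some $q>0$, and $r$ is incident to the root-vertex, not $v$ — contradiction. Hence $v$ has an incident half-edge in $I$. I expect the only subtlety — and the point to state carefully rather than a genuine obstacle — is bookkeeping the convention on $h_0$: in Lemma~\ref{lem:equiv-left-connected} the root $h_0$ of $M$ is a half-edge incident to the root-vertex and root-face, so the contradictions above are literally "$\beta^q(h)$ is incident to $v$ (resp. $f$)" versus "$\beta^q(h)=h_0$ is incident to the root-vertex (resp. root-face)", which are incompatible precisely because $v$ (resp. $f$) is a \emph{non-root} vertex (resp. face). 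No other case analysis is needed.
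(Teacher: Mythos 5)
Your proposal is correct and is essentially the paper's own argument: in both cases one observes that if every half-edge of a vertex-cycle of $\si$ is in $O$ (resp.\ of a face-cycle of $\phi$ is in $I$), then $\beta$ never leaves that cycle, so Lemma~\ref{lem:equiv-left-connected} forces the root $r$ to lie in it. The paper phrases this directly rather than by contradiction, but the content is identical.
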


\begin{proof}
If a cycle of the vertex-permutation $\si$ contains no edge in $I$, then $\be(h)=\si(h)$ for every half-edge $h$ in this cycle. By Lemma~\ref{lem:equiv-left-connected}, this implies that the root $r$ belongs to this cycle. Similarly, if a cycle of the face-permutation $\phi$ contains no edge in $O$, then $\be(h)=\phi(h)$ for every half-edge $h$ in this cycle. By Lemma~\ref{lem:equiv-left-connected}, this implies that the root $r$ belongs to this cycle. 
\end{proof}
\smallskip


\subsection{Proofs concerning the unfolding step.} 
In this Subsection we prove  Theorem~\ref{thm:bij}. 
We fix a left-connected map $(M,(I,O))$, where the map $M=(H,\si,\alpha)$ has $n$ edges, genus $g$, root $r$ and face-permutation $\phi$. We denote $A=(H,\tau,\al)=\Lambda_1(M,(I,O))$ and $B=(H',\pi,\al')=\Lambda_2(M,(I,O))$ and adopt the notation of Section~\ref{sec:bijection} for the sets $H'$, $I'$, $O'$ and the permutations $\si'$ $\phi'$, $\tau'$, $\pi_\circ$ and $\pi_\bu$. 

\begin{lemma}\label{lem:transitively}
The permutations $\tau,\al$ act transitively on $H$, thus $A$ is a map.
\end{lemma}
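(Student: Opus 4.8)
The goal is to show that the permutations $\tau$ and $\al$ act transitively on the set $H$ of half-edges, i.e. that $A=(H,\tau,\al)$ is connected, hence a genuine map. I will work with the extended structure on $H'=H\cup\{i,o\}$ and the permutation $\tau'=\si'\pi_\circ^{-1}$. Since $\tau'(o)=o$ and (by the forthcoming Lemma~\ref{lem:not-alone}) the half-edge $i$ is not alone in its cycle of $\tau'$, transitivity of $\langle\tau,\al\rangle$ on $H$ is equivalent to transitivity of $\langle\tau',\al'\rangle$ on $H'\setminus\{o\}$, which in turn follows from transitivity on $H'$ (the element $o$ being attached to $i$ by the edge $\al'$). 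So it suffices to prove that $\langle\tau',\al'\rangle$ acts transitively on $H'$.

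\textbf{Key steps.} First I would reduce to a statement about reaching the root from an arbitrary half-edge, using left-connectedness. Recall from Lemma~\ref{lem:equiv-left-connected} that left-connectedness means: for every $h\in H$ there is $q>0$ with $\be^q(h)=h_0=r$, where $\be$ is the backward function ($\be(h)=\si(h)$ if $h\in O$, $\be(h)=\phi(h)$ if $h\in I$). The plan is to show that each application of $\be$ corresponds to a move inside the group $\langle\tau',\al'\rangle$; more precisely, I would check that $\be(h)$ and $h$ lie in the same orbit of $\langle\tau',\al'\rangle$ on $H'$, by a small case analysis on whether $h\in I$ or $h\in O$. The point is that $\tau'=\si'\pi_\circ^{-1}$ and $\pi_\circ=\si'_{|I'}$, so $\tau'$ "is" $\si'$ followed by jumping backwards along ingoing half-edges around a vertex; thus $\si(h)$ for $h\in O$ is, up to composing with powers of $\al'$ on the ingoing half-edges encountered, an iterate of $\tau'$ applied to $h$; and for $h\in I$, $\phi(h)=\si\al(h)$ is handled by first applying $\al'$ and then the previous case. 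Hence $\be(h)$ is always $\al'$- and $\tau'$-reachable from $h$, so iterating, every half-edge is in the same orbit as $r$. Finally the root $r$ is in the same orbit as the distinguished half-edges $i,o$ because $\si'$ was defined by inserting $i$ just before $r$ in its cycle, so a single application of $\tau'$ (or of $\si'$ composed with the backward jump) connects $r$ to $i$, and $\al'(i)=o$. This shows $\langle\tau',\al'\rangle$ is transitive on all of $H'$, giving the lemma.

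\textbf{Main obstacle.} The delicate part is verifying cleanly that $\be(h)$ and $h$ lie in the same $\langle\tau',\al'\rangle$-orbit, because the relation between $\si$, $\si'$, $\pi_\circ$ and $\tau'$ involves the "abuse of notation" whereby $\pi_\circ$ acts as the identity on $O'$ and one must keep careful track of which half-edges are ingoing or outgoing along a vertex cycle. Concretely: if $h\in O$ then reading the cycle of $\si'$ at $h$ we have $\si'(h)=\si(h)$, but $\tau'(h)=\si'\pi_\circ^{-1}(h)=\si'(h)$ only when $h\notin I'$, which holds here, so $\tau'(h)=\si(h)=\be(h)$ directly — this case is immediate. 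If $h\in I$ then $\be(h)=\phi(h)=\si(\al(h))$; here $\al'(h)=\al(h)$ lies in $O$ (since $h\in I$), so by the previous case $\tau'(\al'(h))=\si(\al(h))=\be(h)$, and thus $\be(h)=\tau'\al'(h)$ lies in the orbit of $h$. So in fact both cases are clean and the case analysis collapses to these two lines; the only genuine subtlety is making sure the definitions of $\si'$ and $\phi'$ near the root are compatible with the claim at $h=r$, which is why I single out the root step at the end. Once these observations are in place, the transitivity of $\langle\tau,\al\rangle$ on $H$, and hence the fact that $A$ is a map, follows immediately.
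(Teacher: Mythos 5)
Your overall architecture is the paper's: define backward functions, observe that the unfolding does not change them away from the root because $\si'$ and $\tau'=\si'\pi_\circ^{-1}$ coincide on $O'$, and then use left-connectedness (Lemma~\ref{lem:equiv-left-connected}) to propagate connectivity. However, two of your concrete assertions are false as stated and need the repair that the paper builds in. First, for $h\in O$ you claim $\si'(h)=\si(h)$ and hence $\tau'(h)=\be(h)$ unconditionally; this fails exactly at $h=u:=\si^{-1}(r)$, since $\si'$ is obtained from $\si$ by inserting $i$ before $r$, so $\si'(u)=i\neq r=\si(u)=\be(u)$. This is precisely why the paper proves $\be(h)=\tbeta(h)$ only under the hypothesis $\be(h)\neq r$. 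Second, your patch for the root is not correct: a single application of $\tau'$ does not connect $r$ to $i$ — one has $\tau'(i)=t$, the root of the tree $A$, which is in general not $r$, and $\tau'(r)$ need not be $i$ either. The correct glue is the other way around: by Lemma~\ref{lem:not-alone} the half-edge $u$ is in $O$ and $\tau'(u)=i$, and the $\be$-iteration started from \emph{any} half-edge (including $r$ itself) reaches, at the step just before returning to $r$, one of $u$ or $\al(u)$. Combining this with your two clean cases (valid whenever $\be(h)\neq r$) shows every half-edge of $H'$ lies in the $\langle\tau',\al'\rangle$-orbit of $i$, which is what you want. With these two corrections your proof becomes essentially identical to the paper's, which avoids your preliminary reduction to $H'$ altogether by working directly with $\tbeta$ on $H$ and sending every half-edge into $\{u,\al(u)\}$.
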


As mentioned above, the intuition behind the connectivity of $A$ is that left-paths are preserved by the unfolding. This can be formalized as follows.

\begin{proof} Let $\beta$ be the backward function of the oriented map $(M,(I,O))$ defined by $\beta(h)=\si(h)$ if $h$ is in $O$ and $\beta(h)=\si\al(h)$ otherwise. Let $\tbeta$ be the backward function of the oriented map $(A,(I,O))$ defined by $\tbeta(h)=\tau(h)$ if $h$ is in $O$ and $\tbeta(h)=\tau\al(h)$ otherwise.

We first prove that $\beta(h)=\tbeta(h)$ for any half-edge $h\in H$ such that $\beta(h)\neq r$.
If $h$ is in $O$ (and $\beta(h)\neq r$), then 
$$\beta(h)\equiv\si(h)=\si'(h)=\tau'(h)=\tau(h)\equiv\tbeta(h),$$ 
since the permutations $\si'$ and $\tau'$  coincide on $O'$. If $h$ is in $I$ (and $\beta(h)\neq r$), then  
$$\beta(h)\equiv\si(\al(h))=\si'(\al(h))=\tau'(\al(h))=\tau(\al(h))\equiv\tbeta(h),$$  
since again the permutations $\si'$ and $\tau'$ coincide on $O'$.

Since the oriented map $(M,(I,O))$ is left-connected, Lemma~\ref{lem:equiv-left-connected} ensures that for any half-edge $h$, there exists an integer $q>0$ such that $\beta^q(h)=r$. Taking the least such integer $q$, and using the preceding point shows that for any half-edge $h$, there exists an integer $q>0$ such that $\tbeta^{q-1}(h)=\beta^{q-1}(h)=l$, where $l$ is a half-edge such that $\beta(l)=r$. Moreover, the relation $\beta(l)=r$ shows that $l$ is either equal to $u=\si^{-1}(h)$ or to $\al(u)$. Therefore, any half-edge $h$ can be sent to one of the half-edges $u,\al(u)$ by applying the function $\tbeta$, hence by acting with the permutations $\tau$ and $\al$. This proves the lemma.
\end{proof}

We call \emph{root-to-leaves} orientation of a plane tree the unique orientation such that each non-root vertex is incident to exactly one ingoing half-edge.

\begin{proposition}\label{prop:tree-is-oriented}
The map $A$ is a tree. Moreover, $(I,O)$ is the root-to-leaves orientation of $A$.
\end{proposition}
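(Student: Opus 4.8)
The plan is to show that $A=(H,\tau,\al)$ is a tree by a vertex-edge count, and then to identify its orientation. By Lemma~\ref{lem:transitively}, $A$ is already a connected map, so it suffices to prove that its number of vertices equals its number of edges plus one; equivalently, that $\tau$ has exactly $n+1$ cycles. To this end I would count the cycles of $\tau$ by relating them to the cycles of $\tau'=\si'\pi_\circ^{-1}$, recalling that $\pi_\circ=\si'_{|I'}$ (acting as identity on $O'$). The key observation is that splitting the vertex $v$ of $M$ incident to $k>0$ ingoing half-edges into $k$ vertices, as encoded by $v'=v\pi_\circ^{-1}$, increases the vertex count by $k-1$ per vertex; summing over all vertices, and accounting for the extra half-edge $i$ (which by Lemma~\ref{lem:not-alone} is not alone in its cycle of $\tau'$, so that passing from $\tau'$ to $\tau=\tau'_{|H}$ does not create or destroy cycles beyond removing the fixed point $o$), one gets that the number of cycles of $\tau$ is $v(M)+\sum_v (k_v-1) = v(M) + \bigl(\sum_v k_v\bigr) - v(M)$. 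Since each ingoing half-edge of $M$ contributes $1$ to exactly one $k_v$ (with the root-pointing half-edge $i$ handled via $I'$), we have $\sum_v k_v = |I| = n$ (there are $n$ ingoing half-edges, one per edge), so $\tau$ has $n$ cycles — wait, this must be reconciled with the $n+1$ count, which is exactly where the auxiliary half-edges $i,o$ enter: working with $\si'$, $\pi_\circ$, $\tau'$ on $H'$ rather than $\si$ on $H$ contributes the extra $+1$. So the clean version is: compute the number of cycles of $\tau'$ on $H'$, subtract the one cycle $(o)$, and check via $|I'|=n+1$ that one lands on $n+1$.

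Once $A$ is known to be a connected map with $v(A)=e(A)+1$, the Euler relation forces $g(A)=0$ and $f(A)=1$, so $A$ is a plane tree. For the orientation statement, I would argue that $(I,O)$ restricted to $A$ (the same partition of $H$, since $A$ has the same edge set and same involution $\al$ as $M$) is indeed an orientation of $A$, and then verify that each non-root vertex of $A$ is incident to exactly one ingoing half-edge. This is essentially built into the splitting rule: the vertex $v$ of $M$ incident to ingoing half-edges $h_{i_1},\dots,h_{i_k}=h_d$ is split so that the $j$-th piece $v_j$ is incident to $h_{i_{j-1}+1},\dots,h_{i_j}$, and among these the only ingoing one is $h_{i_j}$ (the others $h_{i_{j-1}+1},\dots,h_{i_j-1}$ lie in $O$ by the very definition of the $i_\ell$ as the positions of the ingoing half-edges). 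Hence each $v_j$ carries exactly one ingoing half-edge, namely its ``last'' one in counterclockwise order. The only vertex that could fail this is the one containing the root-pointing half-edge $i$; but that half-edge is exactly what makes the root-vertex of $A$ (the vertex incident to $t=\tau'(i)$) the distinguished root, and the root-to-leaves orientation is defined precisely by this exception. I would make the bookkeeping precise by phrasing it in terms of permutations: a vertex $(h_1,\dots,h_m)$ of $\tau$ is incident to exactly the ingoing half-edges $h_\ell$ with $h_\ell\in I$, and the cycle structure of $\tau=\si'\pi_\circ^{-1}|_H$ shows each such cycle contains exactly one element of $I$ (with the root-cycle corresponding to the cycle of $\pi_\circ$ that contains $i$).

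The main obstacle, I expect, is the careful treatment of the auxiliary half-edges $i$ and $o$ and the passage between $H'$ and $H$: one must check that restricting $\tau'$ to $H$ does not merge or split cycles in an uncontrolled way, which relies on the half-edge $i$ not being alone in its $\tau'$-cycle (this is Lemma~\ref{lem:not-alone}, available to us) and on $o$ being a fixed point of $\tau'$. A secondary subtlety is being scrupulous about the definition of ``incident to an ingoing half-edge'' in $A$ versus $M$, since the half-edges are literally the same set but their grouping into vertices has changed; here the identity $v'=v\pi_\circ^{-1}$ with $\pi_\circ=v_{|I}$ is the right tool, because it shows each cycle of $v'$ (i.e.\ each vertex of $A$ coming from $v$) meets $I$ in exactly one point. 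Modulo these bookkeeping points, the argument is a direct Euler-characteristic computation plus an unwinding of the splitting rule, so I would not anticipate any deeper difficulty.
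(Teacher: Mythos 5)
Your proposal is correct and follows essentially the same route as the paper: count the cycles of $\tau'=\si'\pi_\circ^{-1}$ using the fact that each cycle meets $I'$ at most once, handle the auxiliary half-edges via Lemma~\ref{lem:not-alone} and the fixed point $o$, conclude that $A$ is a tree from the vertex count, and read off the root-to-leaves orientation from the same one-ingoing-half-edge-per-cycle structure. The only (cosmetic) difference is that the paper sidesteps your worry about vertices with $k_v=0$ by using only the bound ``at most one element of $I'$ per cycle'' to get at least $n+1$ cycles, and then deduces ``exactly one'' a posteriori from the equality of counts, rather than invoking Lemma~\ref{lem:some-ingoing} upfront.
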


We start with an easy lemma.
\begin{lemma} \label{lem:not-alone}
The half-edge $u=\si^{-1}(r)$ is in $O$ and $\tau'(u)=i$. In particular, the half-edge $i$ is not alone in its cycle of $\tau'$.
\end{lemma}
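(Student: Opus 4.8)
The statement concerns the half-edge $u=\si^{-1}(r)$ in a left-connected map. First I would argue that $u$ belongs to $O$. Suppose for contradiction that $u$ is in $I$. By Lemma~\ref{lem:some-ingoing} (or directly), left-connectedness gives, for $u$ itself, a left-path ending at $u$; but more to the point, we can invoke the characterization of Lemma~\ref{lem:equiv-left-connected}: iterating the backward function $\be$ from any half-edge eventually reaches the root $r$. The key observation is that $\be(r)=r$: indeed, if $r\in O$ then $\be(r)=\si(r)$, and if $r\in I$ then $\be(r)=\si\al(r)$ --- in either case one must check that this equals $r$ using that $r$ is the root and $u=\si^{-1}(r)$. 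Actually the cleaner route: observe $\be(u)$ equals $\si(u)=r$ if $u\in O$, and equals $\si\al(u)$ if $u\in I$. If $u\in I$ then $\be(u)=\si\al(u)\neq r$ in general, so the left-path structure forces $r$ to be reachable; but then one shows $u$ lies on a directed cycle avoiding the root, contradicting left-connectedness. I would pin down the precise contradiction by tracing: the half-edge $h_0$ pointing to the root-corner is ingoing by convention, and the left-path from it must begin by going backward through $\si$-iterates landing exactly at $u=\si^{-1}(r)$; for this to be a left-path (no ingoing half-edge strictly to the left), $u$ must be outgoing, since otherwise $u$ itself would obstruct the path.

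Once $u\in O$ is established, the second claim $\tau'(u)=i$ is a direct computation from the definitions in Section~\ref{sec:bijection}. Recall $\tau'=\si'\pi_\circ^{-1}$ where $\pi_\circ=\si'_{|I'}$ acts as the identity on $O'$. Since $u\in O\subseteq O'$, we have $\pi_\circ^{-1}(u)=u$, hence $\tau'(u)=\si'(u)$. Now $\si'$ is obtained from $\si$ by inserting the new half-edge $i$ just before the root $r$ in $r$'s cycle; since $u=\si^{-1}(r)$, the element following $u$ in the cycle of $\si'$ is precisely the newly inserted $i$, i.e. $\si'(u)=i$. Therefore $\tau'(u)=i$, and in particular $i$ is not a fixed point of $\tau'$: since $\tau'(u)=i$ with $u\neq i$, the half-edge $i$ is not alone in its cycle of $\tau'$.

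The main obstacle is the first claim, $u\in O$: it is the only part using left-connectedness in an essential way, and it requires correctly relating the convention about the arrow pointing to the root-corner (treated as an ingoing half-edge) with the combinatorial definition of left-paths. The cleanest argument is probably: by Lemma~\ref{lem:equiv-left-connected}, $\be^q(u)=r$ for some $q>0$; taking $q$ minimal, if $u\in I$ then $\be(u)=\si\al(u)$, and one checks $\si\al(u)\neq r$ (else $\al(u)=\si^{-1}(r)=u$, impossible since $\al$ has no fixed point), so $q\geq 2$; then the first step of the backward walk from $u$ already leaves $r$, and unwinding shows the left-path to $u$ would have to pass an ingoing half-edge on its left at $u$ --- contradiction. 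The second and third claims are then immediate bookkeeping with the permutation definitions, requiring no further input.
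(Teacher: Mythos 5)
Your handling of the second and third claims is correct and is exactly what the paper dismisses as ``obvious from the definition of $\tau'$'': since $u\in O'$ one has $\pi_\circ^{-1}(u)=u$, hence $\tau'(u)=\si'(u)=i$ because $i$ was inserted just before $r$ in the cycle of $\si$ containing $r$ and $\si(u)=r$. For the first claim ($u\in O$) your final argument is workable but takes a different, heavier route than the paper. The paper argues directly: by Lemma~\ref{lem:equiv-left-connected} there is a half-edge $h$ with $\be(h)=r$; since $\be(h)=\be(\al(h))$ one may take $h\in O$; then $\be(h)=\si(h)=r$ forces $h=\si^{-1}(r)=u$, so $u\in O$ with no contradiction argument at all. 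Your version---take a left-path $h_1,\dots,h_k$ and note that its first condition, $h_0=r=\si^{q_1}(\al(h_1))$ with $\si^{p}(\al(h_1))\in O$ for $p=0,\dots,q_1-1$, forces $\si^{q_1-1}(\al(h_1))=\si^{-1}(r)=u$ to be outgoing---is also valid, provided you first note that at least one ingoing half-edge exists (for instance $u$ itself under your contradiction hypothesis), so that left-connectedness actually supplies a left-path to unwind.

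Two intermediate assertions in your plan are simply false and should be deleted rather than ``checked'': $\be(r)=r$ does not hold in general ($\be(r)$ equals $\si(r)$ or $\si\al(r)$, neither of which need be $r$), and the claim that $u\in I$ would place $u$ on a directed cycle avoiding the root is unjustified---left-connectedness does not exclude such cycles outside the planar case, and nothing in the definitions produces one here. Your final ``cleanest argument'' does not rely on either assertion, so the proof survives, but as written these are genuine errors, not mere inelegance. Likewise the observation that $q\ge 2$ when $u\in I$ plays no role: the contradiction comes entirely from the left-path condition at the root vertex, i.e.\ from the requirement that every half-edge strictly between $\al(h_1)$ and $r$ in the cyclic order around the root vertex, including $\si^{-1}(r)$, be outgoing.
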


\begin{proof} 
Consider the  backward function $\beta$ of the oriented map $(M,(I,O))$. Since, $(M,(I,O))$ is left-connected,  Lemma~\ref{lem:equiv-left-connected} ensures that there exists a half-edge $h$ such that $\beta(h)=r$. Since $\beta(h)=\beta(\al(h))$ we can take $h$ in $O$ and get $h=\si^{-1}(r)=u$. This proves that $u=\si^{-1}(r)$ is in $O$. It is then obvious from the definition of $\tau'$ that $\tau'(u)=i$.
\end{proof}

\begin{proof}[Proof of Proposition~\ref{prop:tree-is-oriented}] 
Recall that $n=|H|/2$ denotes the number of edges of $M$, hence of $A$. We first prove that the map $A$ has (at least) $n+1$ vertices. By construction, the permutation $\tau'=\si'\pi_\circ^{-1}=\si'{\si'}_{|I}^{-1}$ has at most one element of $I'$ in each of its cycle. Hence it has at least $n+1$ cycles beside the cycle made of $o$ alone.  Moreover, by Lemma~\ref{lem:not-alone}, the half-edge $i$ is not alone in its cycle of $\tau'$, thus the vertex-permutation $\tau=\tau'_{|H}$ has at least $n+1$ cycles.

The (connected) map $A$ has $n$ edges and (at least) $n+1$ vertices hence it is a tree. Moreover, each non-root vertex of $A$ is incident to exactly one half-edge in $I$. Thus, $(I,O)$ is the  root-to-leaves orientation of $A$.
\end{proof}

We prove a last easy lemma about the tree $A=(H,\tau,\al)$.

\begin{lemma} \label{lem:not-alone2}
The permutation $\tau'=\si'\pi^{-1}_\circ$ is obtained from the vertex-permutation $\tau$ by inserting the half-edge $i$ before the root $t$ of $A$ in the cycle of $\tau$ containing $t$ and creating a cycle made of $o$ alone. The permutation $\varphi'=(i,o)\tau'\al'$ is obtained from the face-permutation $\varphi=\tau\al$ by inserting the half-edge $o$ before $t$ in the cycle of $\varphi$ containing $t$ and creating a cycle made of $i$ alone.  
\end{lemma}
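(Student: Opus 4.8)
The plan is to unfold the definitions $\tau'=\si'\pi_\circ^{-1}$ and $\varphi'=(i,o)\tau'\al'$ and compare them directly with $\tau=\tau'_{|H}$ and $\varphi=\tau\al$. First I would recall from Section~\ref{sec:bijection} that $\si'$ is obtained from $\si$ by inserting $i$ just before $r$ in $r$'s cycle and putting $o$ in a cycle by itself, and that $\pi_\circ=\si'_{|I'}$ acts as the identity on $O'$. Hence $\tau'$ fixes $o$ (indeed $\si'(o)=o$ and $\pi_\circ^{-1}(o)=o$), which gives the ``cycle made of $o$ alone'' part immediately. For the half-edge $i$: by Lemma~\ref{lem:not-alone} we have $\tau'(i)=t$ where $t=\tau'(i)$ is the root of $A$, and what remains is to check that on $H\setminus\{u\}$ (where $u=\si^{-1}(r)$, the $\tau'$-preimage of $i$) the permutation $\tau'$ agrees with $\tau=\tau'_{|H}$, while $\tau'(u)=i$ and $\tau(u)=\tau'(i)=t$. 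This is exactly the statement that $\tau'$ is $\tau$ with $i$ spliced in just before $t$ in $t$'s cycle: restricting $\tau'$ to $H$ by erasing $i$ from its (unique) cycle containing $i$ replaces the consecutive pattern $\ldots,u,i,t,\ldots$ by $\ldots,u,t,\ldots$, which is the only change, and it is the inverse of the splicing operation described. So the first assertion follows once I note that $i$ lies in the same $\tau'$-cycle as $t$ (true since $\tau'(i)=t$) and that erasing $i$ recovers $\tau$ by definition of the restriction.

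For the second assertion I would argue analogously, but it is slightly more delicate because of the extra factor $(i,o)$. I would first verify $\varphi'(i)=i$: since $\tau'(i)=t\in H$ and $\al'(t)\in H$, we have $\al'\circ\tau'$ does not involve $i$ at the relevant spot, and the transposition $(i,o)$ then needs to be handled --- more carefully, I would compute $\varphi'^{-1}(i)$ or directly track where $i$ goes. The cleaner route: note $\varphi'=(i,o)\si'\al'\pi_\circ^{-1}\cdot(\text{correction})$ is messy, so instead I would use the relation $\varphi=\tau\al$ on $H$ together with the already-established structure of $\tau'$. Concretely, on $H$ the permutation $\tau'$ equals $\tau$ except that $\tau'(u)=i$; hence $\tau'\al'(h)=\tau\al(h)=\varphi(h)$ for every $h\in H$ with $\al(h)\neq u$, and $\tau'\al'(h)=i$ for the unique $h$ with $\al(h)=u$, namely $h=\al(u)$. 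Then applying $(i,o)$: since $o$ is not in the image of $\tau'\al'$ restricted to $H$ except possibly via the $o$-fixed-point bookkeeping, $(i,o)$ only acts nontrivially on the value $i$, sending it to $o$. Thus $\varphi'(\al(u))=o$ and $\varphi'(i)=(i,o)\tau'\al'(i)=(i,o)\tau'(o)=(i,o)(o)=i$, so $i$ is a fixed point of $\varphi'$. And for $h\in H$ with $h\neq \al(u)$, $\varphi'(h)=\varphi(h)$, while $\varphi'(\al(u))=o$ and I would check $\varphi'(o)=t$ (this should reduce to $\al'(i)=o$... let me instead verify directly: $\varphi'(o)=(i,o)\tau'\al'(o)=(i,o)\tau'(i)=(i,o)(t)=t$). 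So $o$ is spliced in just before $t$ in $t$'s $\varphi$-cycle, and erasing it recovers $\varphi$. This is exactly the claimed splicing.

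I expect the main obstacle to be the careful bookkeeping around the transposition $(i,o)$ in $\varphi'=(i,o)\tau'\al'$ and making sure the ``insert before $t$'' claims are stated with the correct cyclic conventions (i.e.\ that $t$ really is the root of $A$ and that $o$, resp.\ $i$, lands in $t$'s cycle rather than creating a spurious new cycle). Everything else is a routine verification of the two identities $\tau'=\text{(splice $i$ into $\tau$)}$ and $\varphi'=\text{(splice $o$ into $\varphi$)}$ on a case-by-case basis over the half-edges, using only Lemma~\ref{lem:not-alone}, the definition of the restriction $\pi_{|S}$, and the definitions of $\si'$, $\phi'$, $\tau'$ from Section~\ref{sec:bijection}. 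Note in particular that this lemma is the exact analogue, for the tree $A$, of the construction of $\si'$ and $\phi'$ from $\si$ and $\phi$ for the map $M$ --- so the proof is really just checking that the unfolding is compatible with that ``$i,o$-decoration'' bookkeeping.
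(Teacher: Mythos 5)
Your proposal is correct and follows essentially the same route as the paper: both arguments reduce to the direct verifications $\tau'(i)=t$, $\tau'(o)=o$, $\tau'_{|H}=\tau$ for the first assertion, and (with $u=\tau'^{-1}(i)$) the pointwise checks $\varphi'(i)=i$, $\varphi'(\al(u))=o$, $\varphi'(o)=t$, and $\varphi'(h)=\varphi(h)$ elsewhere for the second. The only difference is presentational: you spell out the $(i,o)$-bookkeeping in more detail, which the paper compresses into ``one can check''.
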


\begin{proof} By definition, $\tau'(i)=t$ and $\tau'_{|H}=\tau$. Moreover, it is easy to check that  $\tau'(o)=\si'\pi^{-1}_\circ(o)=o$. This proves the statement about $\tau'$. We now denote $u={\tau'}^{-1}(i)$. By definition, $\varphi(\al(u))=\tau(u)=t$ and one can check $\varphi'(\al(u))=o$, $\varphi'(o)=t$, $\varphi'(i)=i$, and $\varphi'(h)=\varphi(h)$ for all $h\notin \{ i,o,\al(u)\}$. This proves the statement about $\varphi'$.
\end{proof}

We will now prove that the mobile $B=(H',\pi,\al')=\Lambda_2(M,(I,O))$ is a unicellular bipartite map. We introduce some notations for the half-edges in $H$. From Proposition~\ref{prop:tree-is-oriented},  the tree  $(A,(I,O))$ is oriented from root to leaves. In particular, the root $t$ of $A$ is outgoing. We denote $o_1=t,o_2,\ldots,o_n$ the outgoing half-edges as appearing during a counterclockwise tour around the tree $A$, that is to say, ${\varphi}_{|O}=(o_1,o_2,\ldots,o_n)$ where  $\varphi=\tau\al$ is the face-permutation of~$A$. This labelling is indicated in Figure~\ref{fig:coherent-labelling}(a). Observe that Lemma~\ref{lem:not-alone2} implies ${\varphi'}_{|O'}=(o,o_1,o_2,\ldots,o_n)$. For $j=1,\ldots,n$, we denote $i_j=\al(o_j)$, so that  $\al'{\varphi'}_{|O}\al'=(i,i_1,i_2,\ldots,i_n)$.\\

\begin{figure}[ht!]\begin{center} \input{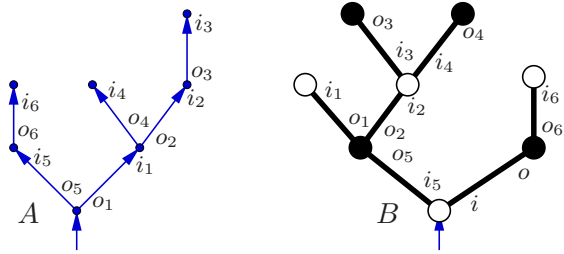}\caption{A pair $(A,B)$ coherently labelled on the alphabet $\{i,i_1,\ldots,i_6,o,o_1,\ldots, o_6\}$: the face-permutation $\varphi$ of the tree $A$ satisfies $\varphi'_{|O'}=(o,o_1,\ldots, o_6)$, while the face-permutation $\psi$ of the mobile $B$ satisfies $\psi_{|I'}=(i,i_1,\ldots, i_6)$.}\label{fig:coherent-labelling} \end{center}\end{figure}


\begin{proposition}\label{prop:mobile}
The mobile $B=\Lambda_2(M,(I,O))$ is a bipartite unicellular map of genus~$g(M)$. Moreover, if we color the vertices of $B$ in two colors, say white and black, with the root-vertex being white, then it has $v(M)$ white vertices and $f(M)$ black vertices. Lastly, the half-edges incident to white vertices are $i,i_1,i_2,\ldots,i_n$ and appear in this order during a clockwise tour around $B$, that is to say, ${\psi}^{-1}_{|I'}=(i,i_1,\ldots, i_n)=\al'{\varphi'}_{|O}\al'$, where $\psi=\pi\alpha'$ is the face-permutation of $B$ and $\varphi'=(i,o)\tau'\al'$. 
\end{proposition}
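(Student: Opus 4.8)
\textbf{Proof plan for Proposition~\ref{prop:mobile}.}

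The plan is to establish the four assertions about $B=(H',\pi,\al')$ in the following order: (1) the half-edges incident to white vertices are exactly $\{i,i_1,\ldots,i_n\}$, equivalently the half-edges incident to black vertices are exactly $\{o,o_1,\ldots,o_n\}$ (so $B$ is bipartite); (2) the face-permutation $\psi=\pi\al'$ satisfies $\psi^{-1}_{|I'}=(i,i_1,\ldots,i_n)$ (so $B$ is unicellular, since $\psi$ has a cycle meeting every white vertex and every cycle of $\psi$ must meet a white vertex); (3) the vertex count ($v(M)$ white, $f(M)$ black), which follows from the definitions of $\pi_\circ=\si'_{|I'}$ and $\pi_\bu=\phi'_{|O'}$ together with Lemma~\ref{lem:not-alone} and its black analogue; and (4) the genus, which follows from (1)--(3) and the Euler relation once we know $B$ is connected and unicellular.

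For step (1), recall $\pi=\pi_\circ\pi_\bu^{-1}$ where $\pi_\circ=\si'_{|I'}$ acts as identity on $O'$ and $\pi_\bu=\phi'_{|O'}$ acts as identity on $I'$. So a cycle of $\pi$ alternates between ``runs'' governed by $\pi_\circ$ (permuting $I'$) and ``runs'' governed by $\pi_\bu^{-1}$ (permuting $O'$); since $\al'$ swaps $I'$ and $O'$, the map $B$ is bipartite with white vertices the cycles of $\pi_\circ$ restricted to $I'$ and black vertices the cycles of $\pi_\bu$ restricted to $O'$. The root of $B$ is $i\in I'$, so the root-vertex is white; this is the coloring convention in the statement. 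The white half-edge set is then $I'=I\cup\{i\}$; I must check $I=\{i_1,\ldots,i_n\}$, i.e. that the ingoing half-edges of $(A,(I,O))$ are exactly the $i_j=\al(o_j)$ — but this is immediate from Proposition~\ref{prop:tree-is-oriented} (the orientation $(I,O)$ of $A$ has each edge with one ingoing and one outgoing half-edge) together with the definition $o_1,\ldots,o_n$ as an enumeration of $O$ and $i_j=\al(o_j)$.

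The crux is step (2): computing $\psi^{-1}_{|I'}$ where $\psi=\pi\al'$. The key identity I would extract is a relation tying $\psi$ to the face-permutation $\varphi'=(i,o)\tau'\al'$ of the tree $A$. From $\pi=\pi_\circ\pi_\bu^{-1}=\si'_{|I'}(\phi'_{|O'})^{-1}$, $\al'$ swapping $I'$ and $O'$, and $\tau'=\si'\pi_\circ^{-1}$, $\varphi'=(i,o)\tau'\al'$, I expect a clean manipulation to yield $\psi_{|I'}^{-1}=\al'\varphi'_{|O'}\al' = (i,i_1,\ldots,i_n)$, using Lemma~\ref{lem:not-alone2} for $\varphi'_{|O'}=(o,o_1,\ldots,o_n)$ and the definitions $i_j=\al(o_j)$, $\al'(o)=i$. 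Concretely: for $h\in I'$, $\psi(h)=\pi\al'(h)$ with $\al'(h)\in O'$, so $\pi$ acts by $\pi_\bu^{-1}$ first (staying in $O'$ as long as possible) then by $\pi_\circ$ to return to $I'$; tracking this and comparing with how $\varphi'_{|O'}$ reaches an element of $O$ from $\al'(h)$ gives the claim. I expect this bookkeeping — carefully handling the special half-edges $i,o$ via Lemmas~\ref{lem:not-alone} and~\ref{lem:not-alone2} — to be the main obstacle; it is the one genuinely computational point, everything else being structural. Once (2) holds, $\psi$ has a cycle containing all of $I'$; since every vertex of $B$ meets $I'$ (white vertices trivially, black vertices because each black half-edge $o_j$ has $\al'(o_j)=i_j\in I'$ in the same... more precisely every black vertex is adjacent to a white half-edge, and by an argument analogous to Lemma~\ref{lem:some-ingoing} every face of $B$ contains a half-edge of $I'$), $\psi$ is cyclic, so $B$ is unicellular and in particular connected. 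Finally, for step (3): $\pi_\circ=\si'_{|I'}$ has the same number of cycles as $\si'$ restricted to the vertices meeting $I'$, which by Lemma~\ref{lem:some-ingoing} is every non-root vertex of $M$ plus possibly the root-vertex plus the cycle $(i)$; a short count using Lemma~\ref{lem:not-alone} (which says $i$ is not alone, so $(i)$ merges with the root-vertex's cycle) gives exactly $v(M)$ white vertices, and dually $f(M)$ black vertices. Then $B$ has $n+1$ edges, $v(M)+f(M)$ vertices, $1$ face, so Euler gives genus $g(B) = \frac{1}{2}(2-(v(M)+f(M))+(n+1)-1) = \frac{1}{2}(2 - v(M) - f(M) + n) = g(M)$ by the Euler relation for $M$ (where $v(M)-n+f(M)=2-2g(M)$). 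This completes the proof.
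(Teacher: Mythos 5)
Your plan follows the same route as the paper's proof: the paper isolates exactly your step (2) as Lemma~\ref{lem:order-coincide}, then deduces unicellularity by noting that every $h\in O'$ satisfies $\psi(h)=\pi\al'(h)\in I'$ (a one-line version of your ``every cycle of $\psi$ meets $I'$'' argument), counts white and black vertices via Lemma~\ref{lem:some-ingoing}, and finishes with the Euler relation, all as you propose. The substantive issue is that the step you yourself flag as ``the main obstacle'' and ``the one genuinely computational point'' is left as an expectation rather than a proof: you never actually verify $\psi^{-1}_{|I'}=\al'\varphi'_{|O'}\al'$. Since this identity is the engine of the whole proposition (it is what forces all of $I'$, hence all of $H'$, into a single cycle of $\psi$), the proposal as written has a genuine hole there.

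For the record the computation does close, essentially by the manipulation you gesture at. For $h\in I'$, using $\varphi'\al'=(i,o)\tau'=(i,o)\si'{\si'}^{-1}_{|I'}$ and the general fact that $\al'\gam_{|O'}\al'=(\al'\gam\al')_{|I'}$ (because $\al'$ exchanges $I'$ and $O'$), one gets
$$\al'{\varphi'}_{|O'}\al'(h)=\bigl(\al'(i,o)\si'{\si'}^{-1}_{|I'}\bigr)_{|I'}(h)=(\al'(i,o)\si')_{|I'}\,{\si'}^{-1}_{|I'}(h);$$
on the other side, $\psi^{-1}=\al'\pi^{-1}$ sends $I'$ into $O'$ and $O'$ into $I'$, so ${\psi}^{-1}_{|I'}(h)={\psi}^{-1}{\psi}^{-1}(h)=\al'\pi_\bu\al'\pi_\circ^{-1}(h)=(\al'(i,o)\si')_{|I'}{\si'}^{-1}_{|I'}(h)$ as well, using $\pi_\bu=\phi'_{|O'}$ and $\phi'=(i,o)\si'\al'$. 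Two minor corrections to the rest: the identification of $\al'\varphi'_{|O'}\al'$ with the explicit cycle $(i,i_1,\ldots,i_n)$ rests on Lemma~\ref{lem:not-alone2} (giving $\varphi'_{|O'}=(o,o_1,\ldots,o_n)$), which you do cite; but in your vertex count, Lemma~\ref{lem:not-alone} (about the cycle of $i$ in $\tau'$) is not the relevant fact --- what makes the root vertex of $M$ contribute a white vertex is simply that $i$ is inserted into its cycle of $\si'$ by construction, so that $(o)$ is the only cycle of $\si'$ disjoint from $I'$, and Lemma~\ref{lem:some-ingoing} handles the non-root vertices. Neither point changes the overall soundness of the plan.
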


\begin{lemma}\label{lem:order-coincide}
The permutation $\psi=\pi\alpha'$ and $\varphi'=(i,o)\tau'\al'$ are related by ${\psi}^{-1}_{|I'}=(i,i_1,\ldots,i_n)=\al'{\varphi'}_{|O'}\al'$.
\end{lemma}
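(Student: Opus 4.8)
## Proof Plan for Lemma~\ref{lem:order-coincide}

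The plan is to unwind the definitions of $\pi$, $\pi_\circ$, $\pi_\bu$ and relate $\psi=\pi\al'$ directly to $\varphi'=(i,o)\tau'\al'$ by a short permutation computation on $H'$, then read off the cycle structure on $I'$. Recall that $\pi=\pi_\circ\pi_\bu^{-1}$ with $\pi_\circ=\si'_{|I'}$ (acting as identity on $O'$) and $\pi_\bu=\phi'_{|O'}$ (acting as identity on $I'$), and that $\phi'=(i,o)\si'\al'$. The first step is to observe that on $O'$ the permutations $\si'$ and $\tau'$ agree (since $\tau'=\si'\pi_\circ^{-1}$ and $\pi_\circ$ restricts to the identity on $O'$), hence $\phi'_{|O'}=\varphi'_{|O'}$ where $\varphi'=(i,o)\tau'\al'$; this will be the bridge between the $M$-side data and the $A$-side data.

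Next I would compute $\psi^{-1}=\al'\pi^{-1}=\al'\pi_\bu\pi_\circ^{-1}$. The idea is to restrict this to $I'$. Since $\pi_\circ^{-1}=\si'^{-1}_{|I'}$ permutes $I'$ and acts trivially on $O'$, and $\pi_\bu=\phi'_{|O'}$ permutes $O'$ and acts trivially on $I'$, the product $\pi_\bu\pi_\circ^{-1}$ is ``block-triangular'' in a way that makes $(\pi_\bu\pi_\circ^{-1})_{|I'}$ computable: starting from an element of $I'$, apply $\pi_\circ^{-1}$ (staying in $I'$ until we land on the first half-edge of $I'$ whose $\si'^{-1}$-predecessor... ) — more cleanly, use the restriction-of-a-product lemma in the spirit of Lemma~\ref{lem:cori} / Proposition~\ref{prop:covered=cyclic}, or simply argue directly: $\pi_\circ$ sends $I'$ to $I'$ and is the identity on $O'$, so $\pi = \pi_\circ\pi_\bu^{-1}$ satisfies $\pi(h)\in I'$ iff $\pi_\bu^{-1}(h)\in I'$, which lets one show $\pi_{|I'} = \pi_\circ\,(\pi_\bu^{-1})_{|I'}$ where $(\pi_\bu^{-1})_{|I'}$ is the restriction of $\phi'^{-1}_{|O'}$ transported through $\al'$. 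Conjugating by $\al'$ turns $\pi_\bu=\phi'_{|O'}=\varphi'_{|O'}$ into a permutation of $I'$, and the point is that $\al'\,\varphi'_{|O'}\,\al'$ has cycle $(i,i_1,\dots,i_n)$ precisely because $\varphi'_{|O'}=(o,o_1,\dots,o_n)$ by the labelling convention fixed just before the statement (using Lemma~\ref{lem:not-alone2}) and $i_j=\al(o_j)$, $i=\al'(o)$.

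The cleanest route, which I would actually write, is: show $\psi^{-1}_{|I'} = (\al' \pi_\bu \al')_{|I'}$ by checking that the ``$\pi_\circ$ part'' contributes nothing to the restriction to $I'$ — formally, for $h\in I'$ let $r=\psi^{-1}_{|I'}(h)$; then there are $h_1,\dots,h_k\in O'$ with $\psi^{-1}(h)=h_1$, $\psi^{-1}(h_j)=h_{j+1}$, $\psi^{-1}(h_k)=r$, and tracking $\psi^{-1}=\al'\pi_\bu\pi_\circ^{-1}$ through these steps (using that $\pi_\circ$ fixes $O'$ and that the $h_j$ lie in $O'$) collapses the $\pi_\circ$ factors, leaving $r=(\al'\pi_\bu\al')_{|I'}(h)$ composed with the identity. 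Then $\al'\pi_\bu\al' = \al'\,\varphi'_{|O'}\,\al'$, and since $\al'$ is an involution, $(\al'\,\varphi'_{|O'}\,\al')_{|I'} = \al'\,\varphi'_{|O'}\,\al'$ already being supported on $I'$, its unique nontrivial cycle is $\al'$ applied to $(o,o_1,\dots,o_n)$, namely $(i,i_1,\dots,i_n)$. This simultaneously gives ${\psi}^{-1}_{|I'}=(i,i_1,\dots,i_n)$ and the identity ${\psi}^{-1}_{|I'}=\al'{\varphi'}_{|O'}\al'$.

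I expect the main obstacle to be the bookkeeping in the restriction step: making rigorous that $(\pi_\circ\pi_\bu^{-1})_{|I'}$, after conjugation by $\al'$, is exactly $(\phi'_{|O'})$-restricted-and-transported, without the $\pi_\circ$ cycles on $I'$ leaking into the answer. This is the same phenomenon as in Lemma~\ref{lem:cori} (a restriction of a product equals a product of restrictions under a stability hypothesis), and the honest thing is to invoke that lemma or re-run its one-paragraph argument with $(\sigma,\phi,S)$ replaced by the appropriate triple here; the stability hypothesis needed ($I'$ stable under the relevant permutation) holds because $\pi_\circ$ preserves $I'$. Everything else is definition-chasing and the labelling conventions already set up before the statement.
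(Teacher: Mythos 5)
There is a genuine gap, and it sits exactly at the point you flag as the ``main obstacle'': the $\pi_\circ$ factor does \emph{not} disappear. For $h\in I'$, the very first application of $\psi^{-1}=\al'\pi_\bu\pi_\circ^{-1}$ evaluates $\pi_\circ^{-1}$ at a point of $I'$, where it acts nontrivially; only the subsequent applications (at points of $O'$) see $\pi_\circ^{-1}$ as the identity. Since $\psi^{-1}$ exchanges $I'$ and $O'$, the restriction is the two-step composition ${\psi}^{-1}_{|I'}(h)=\psi^{-2}(h)=\al'\pi_\bu\al'\pi_\circ^{-1}(h)$, and exactly one factor $\pi_\circ^{-1}$ survives. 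Your identity ${\psi}^{-1}_{|I'}=(\al'\pi_\bu\al')_{|I'}$ is therefore false: in the paper's running example $\al'\pi_\bu\al'=(i,\ba_1,\ba_3)(b_1,\ba_2,b_4,\ba_4,\ba_5,b_3)(b_2)$ has three cycles (in general it has $f(M)$ cycles), whereas ${\psi}^{-1}_{|I'}$ must be the single $(n\!+\!1)$-cycle $(i,i_1,\ldots,i_n)$.

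The ``bridge'' $\phi'_{|O'}=\varphi'_{|O'}$ fails for the same reason. From $\tau'=\si'\pi_\circ^{-1}$ one gets that $\si'$ and $\tau'$ coincide on $O'$, hence that $\phi'=(i,o)\si'\al'$ and $\varphi'=(i,o)\tau'\al'$ coincide pointwise on $I'$ --- not on $O'$, since for $h\in O'$ they evaluate $\si'$, resp.\ $\tau'$, at $\al'(h)\in I'$, where these differ. And even pointwise agreement on one part would not give equality of the restrictions to the other part. Indeed $\varphi'_{|O'}$ is a single $(n\!+\!1)$-cycle ($A$ is a tree), while $\pi_\bu=\phi'_{|O'}$ has $f(M)$ cycles; the true relation is $\al'\varphi'_{|O'}\al'=\al'\pi_\bu\al'\,\pi_\circ^{-1}$. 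Your two false identities each omit the same factor $\pi_\circ^{-1}$, which is why their conjunction formally reproduces the statement of the lemma --- but neither holds, so the argument does not go through. The repair is essentially the paper's proof: reduce \emph{both} sides to the common expression $(\al'(i,o)\si')_{|I'}{\si'}^{-1}_{|I'}(h)$, on one side via ${\psi}^{-1}_{|I'}(h)=\al'\pi_\bu\al'\pi_\circ^{-1}(h)$ together with $\al'\phi'_{|O'}\al'=(\al'\phi'\al')_{|I'}$, and on the other via $\al'\varphi'_{|O'}\al'=(\al'\varphi'\al')_{|I'}=(\al'(i,o)\si'{\si'}^{-1}_{|I'})_{|I'}=(\al'(i,o)\si')_{|I'}{\si'}^{-1}_{|I'}$, using that ${\si'}^{-1}_{|I'}$ preserves $I'$ and acts as the identity on $O'$. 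Your reading of the cycle $(i,i_1,\ldots,i_n)$ from the labelling convention is the one part that is correct as stated.
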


\begin{proof} 
We consider a half-edge~$h$ in~$I'$ and want to show $\al'{\varphi'}_{|O'}\al'(h)={\psi}^{-1}_{|I'}(h)$. 
First observe that for any permutation $\gam$ on $H'$, one has $\al'{\gam}_{|O'}\al'=(\al'\gam\al')_{I'}$ because the involution $\al'$ maps $I'$ to $O'$. In particular, $\al'{\varphi'}_{|O'}\al'=(\al'\varphi'\al')_{|I'}\equiv(\al'(i,o)\si'{\si'}^{-1}_{|I'})_{|I'}$.  Thus,
$$\al'{\varphi'}_{|O'}\al'(h)=(\al'(i,o)\si'{\si'}^{-1}_{|I'})_{|I'}(h)=(\al'(i,o)\si')_{|I'}{\si'}^{-1}_{|I'}(h),$$
since ${\si'}^{-1}_{|I'}$ acts as the identity on $O'$.

We now determine ${\psi}^{-1}_{|I'}(h)$. Observe that permutation ${\psi}^{-1}\equiv \al'\pi^{-1}$  maps  $I'$ to $O'$ (since $I'$ and $O'$ are stable by $\pi\equiv\pi_\circ\pi_\bu^{-1}$).  Thus, 
$${\psi}^{-1}_{|I'}(h)={\psi}^{-1}{\psi}^{-1}(h)\equiv\al'\pi^{-1}\al'\pi^{-1}(h)=\al'\pi_\bu\al'\pi_{\circ}^{-1}(h).$$
By definition, $\pi_\bu=\phi'_{|O'}$, $~\phi'=(i,o)\si'\al'$ and  $\pi_\circ=\si'_{|I'}$, hence 
$$\al'\pi_\bu\al'\pi_{\circ}^{-1}(h)\equiv \al'\phi'_{|O'}\al'\si_{|I'}^{-1}(h)=(\al'\phi'\al')_{|I'}\si_{|I'}^{-1}(h)\equiv (\al'(i,o)\si')_{|I'}\si_{|I'}^{-1}(h).$$
This gives 
$${\psi}^{-1}_{|I'}(h)=(\al'(i,o)\si')_{|I'}\si_{|I'}^{-1}(h)=\al'{\varphi'}_{|O'}\al'(h)$$
which concludes the proof.
\end{proof}

\begin{proof}[Proof of Proposition~\ref{prop:mobile}]
By Lemma~\ref{lem:order-coincide}, every half-edge in $I'$ belongs to the same cycle $C$ of the permutation $\psi=\pi\al'$. Now, if $h$ is a half-edge in $O'$, $\psi(h)=\pi\al(h)$ is in $I'$ (since $I'$ is stable by $\pi$) hence it belongs to the cycle $C$. Thus, the permutation $\psi$ is cyclic. Hence, the permutations $\pi$ and $\al$ act transitively on $H'$, that is, $B$ is a map. Moreover, its face-permutation $\psi$ is cyclic, that is, $B$ is unicellular. 

Moreover, since the sets $I'$ and $O'$ are stable by the vertex-permutation $\pi$ and exchanged by the edge-permutation $\al'$, the map $B$ is bipartite. Let us therefore consider the bipartite coloring where the vertices incident to half-edges in $I'$ are white while the vertices incident to the half-edges in $O'$ are black. By Lemma~\ref{lem:some-ingoing}, each of the cycles of $\si'$ except the cycle made of $o$ alone contains at least one half-edge in $I'$. Therefore, the number of cycles of the permutation $\pi_\circ=\si'_{|I'}$ on $I'$ is the number $v(M)$ of cycles of the vertex-permutation $\si$. Similarly, the number of cycles of the permutation $\pi_\bu=\phi'_{|O'}$ on $O'$ is the number $f(M)$ of cycles of the face-permutation $\phi$. Thus, the map $B$ has $v(M)$ white vertices and $f(M)$ black vertices. Now, Euler relation gives 
$$2g(B)=2+e(B)-f(B)-v(B)=2+(e(M)+1)-1-(v(M)+f(M))=2g(M).$$
Thus, the genus of $B$ is $g(M)$. This concludes the proof of Proposition~\ref{prop:mobile}.
\end{proof}

\noindent \textbf{Topological description of the folding step (Figure~\ref{fig:folding}).} 
We now define a mapping $\Omega$, the \emph{folding step}, which we will prove to be the inverse of the unfolding step~$\Lambda$. Before defining $\Omega$ in terms of permutations, let us explain the topological interpretation of Proposition~\ref{prop:mobile}. We denote by $v_0,v_1,\ldots,v_n$ the vertices of $A$ \emph{in counterclockwise order around $A$} (starting from the root-corner) and by $x_0,x_1,\ldots,x_n$ the \emph{first corners} of these vertices; see Figure~\ref{fig:folding}(a). 
Equivalently, $v_0$ is the root-vertex and $x_0$ is the root-corner, while for $j=1\ldots n$, $v_j$ is the vertex incident to the ingoing half-edge $i_j$ and $x_j$ is the corner following $i_j$ in counterclockwise order around $v_j$; see Figure~\ref{fig:coherent-labelling}. We also denote by $y_0,\ldots,y_n$ the white corners of $B$  \emph{in clockwise order around $B$} (starting from the root-corner $y_0$); see Figure~\ref{fig:folding}(a). By Proposition~\ref{prop:mobile}, for $j=0\ldots n$, $y_j$ is the corner of $B$ following the half-edge $i_j$ in clockwise order around the incident vertex. Therefore this proposition indicates how the folding step $\Omega=\Lambda^{-1}$ should be defined topologically: 
for $j=0,\ldots,n$ the first-corner $x_j$ of the vertex $v_j$ (the $j$th vertex of $A$ in counterclockwise direction) is glued to the corner $y_j$ (the $j$th white corner of $B$ in clockwise direction). This gives a map containing edges of both $A$ and $B$ that we call \emph{partially folded map} which is represented in Figure~\ref{fig:folding}(b). The oriented map   $(M,(I,O))=\Omega(A,B)$ is obtained from the partially folded map by keeping the half-edges of $A$ with their cyclic ordering around the white vertices of $B$ (while the edges and black vertices of $B$ are deleted).\\ 

\begin{figure}[ht!]\begin{center} \input{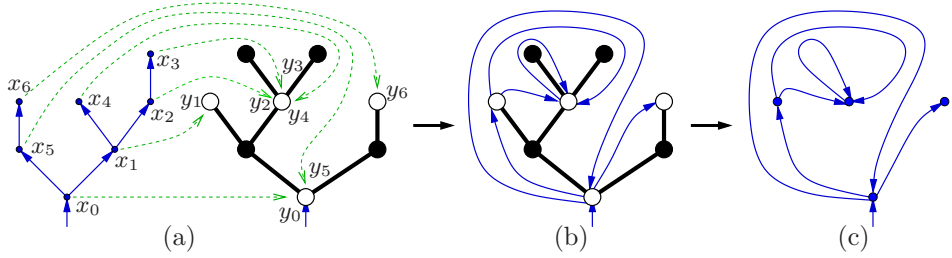}\caption{Topological representation of the folding step for the pair $(A,B)$. Figure~(a) indicates the correspondence between the first corners $x_0,\ldots,x_n$ of the tree $A$ and the white corners  $y_0,\ldots,y_n$ of the mobile $B$. Figure~(b) represents the partially folded map. Figure~(c) represents the oriented map   $(M,(I,O))=\Omega(A,B)$.}\label{fig:folding} \end{center}\end{figure}


We now defines the mapping $\Omega$ in terms of permutations. Let $\tA=(H,\ttau,\al)$ be a rooted plane tree with $n=|H|/2$ edges, and $\tB=(H',\tpi,\al')$ be a rooted bipartite unicellular map with $n+1=|H'|/2$ edges. We consider the usual black-and-white coloring of $\tB$ (with the root-vertex being white). The pair $(\tA,\tB)$ is said \emph{coherently labelled} if the following conditions are satisfied:  
\begin{itemize} 
\item[(i)] $H'=H\cup\{i,o\}$, where $i$ is the root of $M$ and $o=\al'(i)$.
\item[(ii)] $\al=\al'_{|H}$.
\item[(iii)]  The root-to-leaves orientation $(I,O)$ of $\tA$ is such that the half-edges in $I'=I\cup\{i\}$ are incident to white vertices of $\tB$, while half-edges in  $O'=O\cup\{o\}$ are incident to black vertices of $B$.
\item[(iv)] If the half-edges  $o_1,o_2,\ldots,o_n$ in $O$ appear in this order in counterclockwise direction around $\tA$ with $o_1$ being the root of $\tA$, then the half-edges  $i,i_1,\ldots,i_n$ defined by  $i_j=\al(o_j)$ for $j=1\ldots n$ appear in this order in clockwise direction around $\tB$. Equivalently, ${\psi}^{-1}_{|I'}=(i,i_1,\ldots, i_n)=\al'{\varphi'}_{|O}\al'$, where  $\psi=\pi\alpha'$ is the face-permutation of $B$ and $\varphi'=(i,o)\tau'\al'$. 
\end{itemize}

For example, the pair $(A,B)$ represented in Figure~\ref{fig:coherent-labelling} is coherently labelled. Observe that Proposition~\ref{prop:mobile} precisely states that the image by $\Lambda$ of a left-connected map $(M,(I,O))$ is coherently labelled. We now prove (the somewhat obvious fact) that any pair $(\tA,\tB)$ can be relabelled coherently.

\begin{lemma}\label{lem:formecanonique}
Let $\tA=(H,\ttau,\al)$ be a rooted plane tree with $n=|H|/2$ edges, and $\tB=(H',\tpi,\al')$ be a rooted bipartite unicellular map with $n+1=|H'|/2$ edges. Then, there is a unique way of relabelling $\tB$ in such a way that the pair $(\tA,\tB)$ is coherently labelled.
\end{lemma}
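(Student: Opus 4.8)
The claim is that given a plane tree $\tA$ with $n$ edges and a rooted bipartite unicellular map $\tB$ with $n+1$ edges, there is a unique relabelling of $\tB$ making the pair $(\tA,\tB)$ coherently labelled. First I would observe that conditions (i)--(iv) actually force the labels of $\tB$ step by step, so the proof is just a matter of checking that the forced choice is well-defined and consistent. Concretely: in $\tA$, fix once and for all the root-to-leaves orientation $(I,O)$ (this is intrinsic to $\tA$ and requires no choices), and let $o_1,o_2,\ldots,o_n$ be the outgoing half-edges of $\tA$ listed in the order $\varphi'_{|O'}=(o,o_1,\ldots,o_n)$, i.e.\ in counterclockwise order around $\tA$ starting from the root, with $o_1$ the root of $\tA$. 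Set $i_j=\al(o_j)$ for $j=1,\ldots,n$. By (i) and (ii), the half-edge set of $\tB$ must be $H'=H\cup\{i,o\}$ with $\al'=\al$ on $H$ and $\al'(i)=o$; so the only freedom left is: (a) which half-edge of $\tB$ is the root $i$, and (b) once $i$ is chosen, how the remaining half-edges of $\tB$ are identified with $H\cup\{o\}$.

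Second, I would argue that (iii) and (iv) pin this down uniquely. Since $\tB$ is bipartite with a distinguished bipartition (root-vertex white), its half-edges split into the ``white-incident'' set and the ``black-incident'' set, of sizes $n+1$ and $n+1$ respectively. Condition (iii) forces $I'=I\cup\{i\}$ (size $n+1$) to be identified with the white-incident half-edges and $O'=O\cup\{o\}$ with the black-incident ones, and moreover $\al'$ must restrict to the canonical bijection between the two classes of $\tB$ coming from its edges — but this last bijection is exactly $\al'$ itself, so $\al'$ is already determined on $\tB$ up to the relabelling. Now pick the root of $\tB$; call the white-incident half-edges, listed clockwise around $\tB$ starting from the root, $\hat\imath_0,\hat\imath_1,\ldots,\hat\imath_n$ (this list is $\psi^{-1}_{|I'}$ read as a cycle from the root, where $\psi$ is the face-permutation of $\tB$, using that $\tB$ is unicellular so $\psi$ is a single cycle and its restriction to the white-incident half-edges is again a single cycle of length $n+1$). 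Condition (iv) says precisely that the relabelling must send $\hat\imath_0\mapsto i$, $\hat\imath_j\mapsto i_j$ for $j=1,\ldots,n$; and then the black-incident half-edges are forced by $o=\al'(i)$, $o_j=\al'(i_j)$. So for each choice of root of $\tB$ there is exactly one candidate relabelling, and it automatically satisfies (i)--(iv) by construction: (i) and (ii) hold since $\al'$ was defined that way, (iii) holds since white went to $I'$ and black to $O'$, and (iv) holds by the very definition of the label assignment. Finally, condition (i) also requires that the root of $\tB$ (after relabelling) be $i$, and $i=\hat\imath_0$ is the original root of $\tB$; so in fact the root of $\tB$ is \emph{not} a free parameter — it is fixed — and uniqueness follows. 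The existence is witnessed by this same construction.

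Third, I would be careful about one small point that is the only place anything can go wrong: the list $\hat\imath_0,\ldots,\hat\imath_n$ of white-incident half-edges in clockwise order around $\tB$ must genuinely have length $n+1$ and must be a single cycle — equivalently, $\psi^{-1}_{|I'}$ is an $(n+1)$-cycle. This is immediate because $\tB$ has $n+1$ white-incident half-edges (one per white corner, and there are $v_{\mathrm{white}}(\tB)$ white vertices contributing a total of $n+1$ half-edges since every edge has exactly one white endpoint) and because $\tB$ is unicellular, so $\psi$ is a single $2(n+1)$-cycle, and restricting a cyclic permutation to any subset yields a cyclic permutation on that subset. Hence the correspondence $\hat\imath_j\leftrightarrow i_j$ is a genuine bijection $I'(\tB)\to I'$, the induced $\al'$-conjugate map is a bijection $O'(\tB)\to O'$, and together they give a bijection $H'(\tB)\to H'$; conjugating $\tpi$ and $\al'$ by it produces a coherently labelled copy of $\tB$.

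**Expected main obstacle.** There is essentially no real obstacle here: the statement is the ``somewhat obvious fact'' the authors advertise, and the proof is bookkeeping. The one thing that requires a moment's care is making sure the two independent orderings — counterclockwise around $\tA$ for the $o_j$'s (via $\varphi'$) and clockwise around $\tB$ for the $\hat\imath_j$'s (via $\psi^{-1}$) — are each well-defined single cycles of the right length, and that condition (iv)'s displayed reformulation ${\psi}^{-1}_{|I'}=(i,i_1,\ldots,i_n)=\al'{\varphi'}_{|O}\al'$ really is equivalent to the stated combinatorial matching; once that equivalence is noted, existence and uniqueness both drop out of the construction in a single line.
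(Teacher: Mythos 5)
Your proposal is correct and follows essentially the same route as the paper: both proofs read off the outgoing half-edges $o_1,\ldots,o_n$ of $\tA$ in counterclockwise order via $\varphi_{|O}$, read off the white-incident half-edges of $\tB$ in clockwise order via $\psi^{-1}_{|I'}$, and observe that conditions (i)--(iv) force the unique bijection matching the $j$th white half-edge of $\tB$ to $\al(o_j)$ (with the black half-edges then determined by $\al'$-equivariance). Your extra verifications --- that $\psi^{-1}_{|I'}$ is a single $(n+1)$-cycle and that the forced relabelling does satisfy (i)--(iv) --- are details the paper leaves implicit under ``clearly,'' so nothing is missing or different in substance.
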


\begin{proof}
 We denote by $(I,O)$ the root-to-leaves labelling of $A$. We denote by $I'$ and $O'$ respectively the set of half-edges incident to white and black vertices of $\tB$ and observe that these sets are exchanged by $\al'$ (since $\tB$ is bipartite). We denote $\tvarphi=\ttau\al$ and $\tpsi=\tpi\al'$ are the face-permutation of $\tA$ and $\tB$ respectively. Lastly, we denote  $(o_1,o_2,\ldots,o_n)$ the cycle ${\tvarphi}_{|O}$ with $o_1$ being the root of $\tA$ and  $(i',i_1',\ldots,i_n')$ the cycle ${\tpsi}_{|I'}^{-1}$ with $i'$ being the root of $\tB$. Now we consider the relabelling of $\tB$ given by the bijection $\la$ from $H'$ to $H\cup\{o,i\}$ (where $i,o$ are half-edges not in $H$) given by $\la(i')=i$, $\la(\al(i'))=o$ and for all $j=1\ldots n$, $\la(i_j')=\al(o_j)$ and $\la(\al(i_j'))=o_j$. Clearly $\la$ is a bijection and it is the unique bijection making the pair $(\tA,\tB)$ coherently labelled.
\end{proof}

We denote by $P_n$ the set of pairs  $(\tA,\tB)$ made of a tree of size $n$ and a unicellular bipartite map of size $n+1$. 
We now consider such a pair $(\tA,\tB)$, where $\tA=(H,\ttau,\al)$ and $\tB=(H',\tpi,\al')$. By Lemma~\ref{lem:formecanonique}, we can assume that the pair $(\tA,\tB)$ is coherently labelled and we adopt the notations $i$, $o$, $I$, $O$, $I'$, $O'$ introduced in the conditions (i- iv) (in particular, $(I,O)$ is the root-to-leaves orientation of $A$).  We then define $\ttau'$ as the permutation on $H'$ obtained from $\tau$ by inserting $i$ before the root $t$ of $\tA$ in the cycle containing it and creating a cycle made of $o$ alone. We also define the permutations $\tpi_\circ$ and $\tsi'$ on $H'$ and the permutation $\tsi$ on $H$ by 
\begin{equation}\label{eq:tilde-pitau}
\tpi_\circ=\tpi_{|I'},~~~ \tsi'=\ttau'\tpi_\circ~~~ \textrm{ and }~~~ \tsi=\tsi'_{|H}~,
\end{equation}
(where a slight abuse of notation is done by considering $\tpi_\circ$ as a permutation on $H'$ acting as the identity on $O'$). With these notations, we define $\Omega(\tA,\tB)=(\tM,(I,O))$, where $\tM=(H,\tsi,\al)$.\\

We now complete the proof of Theorem~\ref{thm:bij} by proving the following proposition.

\begin{proposition}\label{prop:Lambda-Omega}
The mappings $\Lambda$ and $\Omega$ are inverse bijections between left-connected maps of size $n$ and pairs in $P_n$.
\end{proposition}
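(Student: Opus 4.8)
The plan is to show that $\Lambda$ and $\Omega$ are mutually inverse, which together with the already-established facts (Propositions~\ref{prop:tree-is-oriented} and~\ref{prop:mobile} show $\Lambda$ maps left-connected maps of size $n$ into $P_n$, and the coherent-labelling construction shows $\Omega$ is well defined on $P_n$) will give the bijection. First I would verify that $\Omega$ indeed outputs a left-connected map: given a coherently labelled pair $(\tA,\tB)$, one must check that $\tM=(H,\tsi,\al)$ is a genuine map (transitivity of $\tsi,\al$) of the right genus, and that the root-to-leaves orientation $(I,O)$ of $\tA$ is left-connected as an orientation of $\tM$. The genus count is a routine Euler-relation computation exactly mirroring the one at the end of the proof of Proposition~\ref{prop:mobile}, using that $\tsi'=\ttau'\tpi_\circ$ has cycles in bijection with the white vertices of $\tB$ while its face-permutation picks up the black vertices and the deleted edges of $\tB$. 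Left-connectedness should follow from Lemma~\ref{lem:equiv-left-connected}: the backward function of $(\tM,(I,O))$ agrees, off the root, with the backward function of the \emph{tree} $\tA$ (since $\tsi'$ and $\ttau'$ coincide on $O'$, just as in the proof of Lemma~\ref{lem:transitively}), and in a tree every half-edge reaches the root under iterated $\be$.

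Next, and this is the core of the argument, I would check the two composition identities at the level of permutations. For $\Omega\circ\Lambda=\mathrm{id}$: start with a left-connected $(M,(I,O))$, form $(A,B)=\Lambda(M,(I,O))$, note by Proposition~\ref{prop:mobile} that this pair is already coherently labelled, and then simply unwind the definitions. We have $\tpi_\circ$ computed from $B$ equal to $\pi_{|I'}=\pi_\circ$ (this needs a short check that $\pi=\pi_\circ\pi_\bu^{-1}$ restricted to $I'$ recovers $\pi_\circ$, using that $I'$ is $\pi$-stable and $\pi_\bu^{-1}$ fixes $I'$ pointwise), and $\ttau'$ reconstructed from $A$ by reinserting $i,o$ equals the original $\tau'$ by Lemma~\ref{lem:not-alone2}. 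Then $\tsi'=\ttau'\tpi_\circ=\tau'\pi_\circ=\si'\pi_\circ^{-1}\pi_\circ=\si'$, hence $\tsi=\si$, so $\Omega(A,B)=(M,(I,O))$. For $\Lambda\circ\Omega=\mathrm{id}$: start with a coherently labelled $(\tA,\tB)$, set $(M,(I,O))=\Omega(\tA,\tB)$ so $\si'=\ttau'\tpi_\circ$, and run $\Lambda$. One gets $\pi_\circ=\si'_{|I'}=\tpi_\circ$ (again using that $\ttau'$ carries exactly one $I'$-element per cycle, which holds since $\tA$ is a tree with root-to-leaves orientation, so that $\si'_{|I'}$ undoes the $\tpi_\circ$ factor), hence $\tau'=\si'\pi_\circ^{-1}=\ttau'$ and $A=\tA$. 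For the mobile one must recover $\tB$: by Lemma~\ref{lem:order-coincide} the face-permutation of $B$ is determined by $\al'{\varphi'}_{|O'}\al'$, and coherent labelling condition~(iv) says this equals the corresponding data for $\tB$; combined with $\al'=\al'$ and $\pi=\pi_\circ\pi_\bu^{-1}$, where $\pi_\bu=\phi'_{|O'}$ is itself determined by $\si'=\ttau'\tpi_\circ$, one identifies $\pi$ with $\tpi$ and concludes $B=\tB$.

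I expect the main obstacle to be the $\Lambda\circ\Omega=\mathrm{id}$ direction, specifically pinning down $\pi_\bu$ and hence $\pi$ from the reconstructed $\si'$. The subtlety is that $\pi_\bu=\phi'_{|O'}$ where $\phi'=(i,o)\si'\al'$, so one is asserting that the black-vertex structure of $\tB$ is forced by the white-vertex structure together with the cyclic order data in~(iv) — this is exactly the content of Lemma~\ref{lem:order-coincide} read backwards, and the bookkeeping with the auxiliary half-edges $i,o$ and the abuse of notation around the restrictions $(\cdot)_{|I'}$, $(\cdot)_{|O'}$ (acting as identity off the relevant set) needs to be handled carefully. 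The other direction is essentially immediate from Lemmas~\ref{lem:not-alone2} and the restriction identity $(\pi^{-1})_{|S}=(\pi_{|S})^{-1}$. Once both compositions are the identity, the proposition follows, and with it Theorem~\ref{thm:bij} since $\Psi=\Lambda\circ\Delta$ is a composition of the bijections from Theorem~\ref{thm:Delta} and this proposition, with the size/genus/vertex-count bookkeeping supplied by Propositions~\ref{prop:tree-is-oriented} and~\ref{prop:mobile}.
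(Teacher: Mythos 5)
Your proposal is correct and follows essentially the same route as the paper's proof: both composition identities are checked at the level of the permutations $\si'$, $\tau'$, $\pi_\circ$, $\pi_\bullet$, with Lemma~\ref{lem:not-alone2} handling the $\Omega\circ\Lambda$ direction, the backward-function argument of Lemma~\ref{lem:equiv-left-connected} giving left-connectedness of $\Omega(\tA,\tB)$, and condition~(iv) of the coherent labelling (Lemma~\ref{lem:order-coincide} ``read backwards'', exactly as you anticipate) forcing $\tpi_\bullet=\tphi'_{|O'}$ in the $\Lambda\circ\Omega$ direction. The step you flag as the main obstacle is indeed the one the paper spends the most care on, and your proposed mechanism for it is the one the paper uses.
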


\begin{proof}
\ite We first prove that the mapping $\Omega\circ\Lambda$ is the identity on left-connected maps.\\
Let $(M,(I,O))$ be a left-connected map, where $M=(H,\sigma, \alpha)$. Let $(A,B)=\Lambda(M,(I,O))$, where $A=(H,\tau,\al)$ and $B=(H',\pi,\al')$ (recall that $(A,B)$ is coherently labelled by Proposition~\ref{prop:mobile}). Let also $(\tM,(\tI,\tO))=\Omega(A,B)$, where $\tM=(H,\tsi,\al)$. We want to prove that $(I,O)=(\tI,\tO)$ and $M=\tM$ (or equivalently, $\si=\tsi$).  
 
By definition of $\Omega$, $(\tI,\tO)$ is the root-to-leaves orientation of $A$. Moreover, by Proposition~\ref{prop:tree-is-oriented}, $(I,O)$ is also the root-to-leaves orientation of $A$. Hence, $(I,O)=(\tI,\tO)$.
   
By definition, $\si=\si'_{|H}$, where $\si'=\tau'\pi_\circ=\tau'\pi_{|I'}$ (see~\eqref{eq:pitau}). Similarly,  $\tsi=\tsi'_{|H}$ and $\tsi'=\ttau'\tpi_\circ=\ttau'\pi_{|I'}$ (see~\eqref{eq:tilde-pitau}). Moreover, Lemma~\ref{lem:not-alone2} ensures that $\tau'=\ttau'$ (since the permutations $\tau'$ and  $\tau''$ are obtained from $\tau$ by the same procedure). Thus, $\si'=\tsi'$ and $\si=\tsi$.\\

\ite We now prove that  the mapping $\Lambda\circ\Omega$ is the identity on $P_n$.\\
We must first prove that this mapping is well defined, that is, the image of any pair $(\tA,\tB)\in P_n$ by $\Omega$ is a left-connected map. Let us denote $\tA=(H,\ttau,\al)$ and $\tB=(H',\tpi,\al')$. 
By Lemma~\ref{lem:formecanonique}, we can assume that the pair $(\tA,\tB)$ is coherently labelled and we adopt the notations $i$, $o$, $I$, $O$, $I'$, $O'$ of conditions (i-iv) and the notations $\tpi_\circ$, $\tsi'$, $\tsi$ introduced in~\eqref{eq:tilde-pitau}. Lastly, we denote $\Omega(\tA,\tB)=(\tM,(I,O))$, where $\tM=(H,\tsi,\al)$. 

Let $\tbeta$ be the backward function of the tree $\tA$ defined on $H$ by $\tbe(h)=\ttau(h)$ if $h$ is in $O$ and $\tbe(h)=\ttau\al(h)$ otherwise. Let $\beta$ be defined on $H$ by $\beta(h)=\tsi(h)$ if $h$ is in $O$ and  $\beta(h)=\tsi\al(h)$ otherwise. Let $t$ be the root of $\tA$ and let $u=\tau^{-1}(t)$. It is easy to show (as is done in the proof of Lemma~\ref{lem:transitively}) that $\beta(h)=\tbeta(h)$ as soon as $\tbeta(h)\neq t$. The tree $(A,(I,O))$ is oriented from-root-to leaves, hence it is left-connected. Thus, by Lemma~\ref{lem:equiv-left-connected}, for any half-edge $h\in H$ there exists an integer $q>0$ such that $\be^q(h)=t$. By taking the least such integer $q$, one gets  $\tbe^{q-1}(h)=\be^{q-1}(h)\in \{u,\al(u)\}$. Since $u$ is in $O$, one gets $\tbe(\al(u))=\tbe(u)=\tsi(u)=r$. Hence, $\tbe^{q}(h)=r$. By Lemma~\ref{lem:equiv-left-connected}, this implies that  $(\tM,(I,O))$ is left-connected.\\

We now study the restrictions $\tpi_\circ\equiv\tpi_{|I'}$ and $\tpi_\bu\equiv\tpi^{-1}_{|O'}$. Recall that the map $\tB$ is bipartite with white vertices incident to half-edges in $I'$ and black vertices incident to half-edges in $O'$. Hence, $\tpi=\tpi_\circ\tpi_\bu^{-1}$.
 
We first prove that the permutation $\tpi_\circ$ is equal to $\tsi'_{|I'}$. We consider a half-edge $h$ in $I'$. By definition of restrictions, $\tsi'_{|I'}(h)={\tsip}^k(h)$ for a positive integer $k$ such that for all $0<j<k$, the half-edge ${\tsip}^j(h)$ is in $O'$. Moreover, the permutations $\ttau'$ and $\tsi'$ coincide on $O'$. Thus, $\tsi'_{|I'}(h)={\ttaup}^{k-1}(\tsi'(h))$. By~\eqref{eq:tilde-pitau}, $\tsi'=\ttau'\tpi_\circ$, hence $\tsi'_{|I'}(h)={\ttaup}^{k}(\tpi_\circ(h))$. Therefore,  $\tsi'_{|I'}(h)$ is a half-edge in $I'$ contained in the cycle of $\ttau'$ containing the half-edge $\pi_\circ(h)$ (which is in $I'$).  Since $(I,O)$ is the root-to-leaves orientation of $\tA$,  every cycle of $\ttau'$ contains exactly one half-edge in $I'$ (except for the cycle made of $o$ alone).  Thus,  $\tsi'_{|I'}(h)=\tpi_\circ(h)$.

We now prove that the permutation $\tpi_\bu$ is equal to $\tphi'_{|O'}$, where $\tphi'=(i,o)\tsi'\al'$. We consider the face-permutation $\tpsi=\tpi\al'$ of $\tB$. Since $\tpi=\tpi_\circ\tpi_\bu^{-1}$ one gets $\tpsi^{-1}=\al'\tpi_\bu\tpi_\circ^{-1}$, hence $\tpsi_{|I'}^{-1}=\al'\tpi_\bu\al'\tpi^{-1}_\circ$ and finally, $\tpi_\bu=\al'\tpsi_{|I'}^{-1}\tpi_\circ\al'$. We now use the property (iv) of the coherently labelled pair $(\tA,\tB)$. This property reads $\tpsi_{|I'}^{-1}=\al'\tvarphi'_{|O'}\al'$, where $\tvarphi'=(i,o)\ttau'\al'$. Thus,  $\tpi_\bu=\tvarphi'_{|O'}\al'\tpi_\circ\al'$. We now consider  a half-edge $h$ in $O'$. By definition of restrictions, $\tpi_\bu(h)=\tvarphip^k(\al'\tpi_\circ\al'(h))$, where $k$ is the least positive integer $k$ such that $\tvarphip^k(\al'\tpi_\circ\al'(h))$ is in $O'$.  Moreover, the permutations $\tvarphi'=(i,o)\ttau'\al'$  and $\tphi'=(i,o)\tsi'\al'$ coincide on $I'$ (since $\tsi'$ and $\ttau'$ coincide on $O'$). Thus,  $\tpi_\bu(h)=\tphip^{k-1}(\tvarphi'\al'\tpi_\circ\al'(h))$ , where $k$ is the least positive integer $k$ such that $\tphip^{k-1}(\tvarphi'\al'\tpi_\circ\al'(h))$ is in $O'$. Moreover, $\tvarphi'\al'\tpi_\circ\al'(h)=(i,o)\tsi'\al'(h)\equiv \tphi'(h)$ since $\tvarphi'= (i,o)\ttau'\al'$ and $\tpi_\circ=\ttau'^{-1}\tsi'$ by~\eqref{eq:tilde-pitau}. Thus, $\tpi_\bu(h)=\tphip^{k}(h)=\tphi'_{|O'}(h)$.

Given that $\tpi_\circ\equiv\tpi_{|I'}=\tsi'_{|I'}$ and $\tpi_\bu\equiv\tpi_{|O'}=\tphi'_{|O'}$, it is clear from the definition of the mapping $\Lambda$ that $\Lambda(\tM,(I,O))=(\tA,\tB)$. This concludes the proof of Proposition~\ref{prop:Lambda-Omega}.
\end{proof}

\smallskip


\section{Labeled and  blossoming mobiles, and link with known bijections}\label{sec:alternative-folding}
In this section we present two alternative ways of encoding the image of the bijection $\Psi$ (pairs made of a tree and a mobile) and we describe the bijection $\Psi$  in terms of these encodings. These alternative descriptions are particularly useful for studying the specializations of the bijection $\Psi$, and its relation with known bijections. In particular we will use them to prove that $\Psi$ can be specialized to a classical bijection by Bouttier \emph{et al}~\cite{BDFG:mobiles}. These descriptions are also used in~\cite{Bernardi-Fusy:dangulations,Bernardi-Fusy:Bijection-girth} in order to define a ``master bijection'' which can be specialized to a bijection $\Psi_d$, $d\geq 3$ for planar maps of girth $d$.

\subsection{Labeled mobiles and blossoming mobiles}
We first define the degree-code and height-code of a tree. Let $A$ be a rooted plane tree with $n$ edges. Let $v_0,v_1,\ldots,v_n$ be the vertices in counterclockwise order of appearance around the tree (with $v_0$ being the root). The \emph{height-code} of the tree $A$ is the sequence $c_0,\ldots,c_n$, where $c_j$ is the \emph{height} of the vertex $v_j$ (the number of edges on the path from $v_0$ to $v_j$). The \emph{degree-code} (or \emph{Lukasiewicz code}) is the sequence $d_0,\ldots,d_n$, where $d_j$ is the number of children of $v_j$. The height- and degree-code for the tree $A$ represented in Figure~\ref{fig:alternative} are respectively $0123212$ and $2210010$. It is well known that the height-code or degree-code both determine the tree $A$. Moreover, a sequence of non-negative integers $c_0,\ldots,c_n$ is a height-code if and only if
$$c_0=0~~~ \textrm{ and } ~~~\forall i<n,~ 0<c_{i+1}\leq c_i+1,$$
and a sequence of non-negative integers $d_0,\ldots,d_n$ is a degree-code if and only if
$$\sum_{i=0}^n(d_i-1)=-1 ~~~\textrm{ and }~~~ \forall k<n,~ \sum_{i=0}^k(d_i-1)\geq 0.$$

We can now give alternative descriptions of a pair $(A,B)$ by encoding the tree $A$ as some \emph{decorations} added to the mobile $B$ (the decorations corresponding either to the height- or degree-code of $A$). 
We consider the usual black and white coloring of the mobile $B$ (with the root-vertex being white). We say that the mobile $B$ is \emph{corner-labelled} if a non-negative number called \emph{label} is attributed to each of the $n+1$ white corners. The mobile $B$ is  \emph{well-corner-labelled} if the root-corner has label 0, all other corners have positive labels and the labels do not increase by more than 1 from a corner to the next one in clockwise direction around $B$. Equivalently, $B$ is well-corner-labelled if the sequence of corners encountered in clockwise order around the mobile starting from the root-corner is the height-code of a tree.   A well-corner-labelled mobile is shown in Figure~\ref{fig:alternative}(b). We now consider mobiles with \emph{buds}, that is, dangling half-edges. A \emph{blossoming mobile}  is a mobile $B$ together with some outgoing buds glued in each white corners. The  \emph{sequence of buds} encountered in clockwise order around the mobile is the sequence $d_0,d_1,\ldots,d_n$ where $d_i$ is the number of buds in the $i\tth$ corner of $B$ (in clockwise order starting from the root). The blossoming mobile is \emph{balanced} if its sequence of buds is the degree-code of a tree.   A balanced blossoming mobile is shown in Figure~\ref{fig:alternative}(c).
Since both the height- and degree-code (made of $n+1$ integers) determine a plane tree (with $n$ edges) the following result is obvious.

\begin{lemma}\label{lem:alternative-encoding} 
The three following sets are in bijection:
\begin{itemize}
\item pairs $(A,B)$ made of a plane tree $A$ and a mobile $B$ with respectively $n$ and $n+1$ edges, 
\item well-corner-labelled mobiles with $n+1$ edges, 
\item balanced blossoming mobiles  with $n+1$ edges.
\end{itemize}
\end{lemma}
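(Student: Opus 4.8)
The plan is to reduce the statement to the two classical encodings of plane trees recalled just above the lemma: the bijection between plane trees with $n$ edges and height-codes $(c_0,\ldots,c_n)$, and the bijection between plane trees with $n$ edges and degree-codes $(d_0,\ldots,d_n)$. The only extra ingredient is the observation that a mobile $B$ with $n+1$ edges has exactly $n+1$ white corners: since $B$ is bipartite and unicellular, the $2(n+1)$ corners met when turning around its unique face alternate between white and black, so there are $n+1$ of each. Consequently one can list the white corners of $B$ as $y_0,y_1,\ldots,y_n$ in clockwise order around the face, starting from the root-corner $y_0$ (this is the list already used to define the sequence of labels of a corner-labelled mobile and the sequence of buds of a blossoming mobile).

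First I would describe the map from pairs $(A,B)$ to well-corner-labelled mobiles. Given $(A,B)$, let $(c_0,\ldots,c_n)$ be the height-code of $A$ and attach the label $c_j$ to the white corner $y_j$ of $B$. By the characterization of height-codes recalled above ($c_0=0$, and $0<c_{i+1}\le c_i+1$ for all $i<n$), the resulting mobile is well-corner-labelled. Conversely, a well-corner-labelled mobile $B'$ determines, by reading its labels in clockwise order starting from the root-corner, a height-code, hence a unique plane tree $A$ with $n$ edges; forgetting the labels of $B'$ yields a mobile $B$ with $n+1$ edges, and the pair $(A,B)$ is sent back to $B'$ by the previous construction. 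The two constructions are visibly inverse to one another, so they form a bijection between the first two sets.

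The map from pairs $(A,B)$ to balanced blossoming mobiles is entirely parallel: given $(A,B)$, let $(d_0,\ldots,d_n)$ be the degree-code of $A$ and glue $d_j$ outgoing buds in the white corner $y_j$ of $B$. The characterization of degree-codes ($\sum_{i=0}^n(d_i-1)=-1$ and $\sum_{i=0}^k(d_i-1)\ge 0$ for all $k<n$) ensures precisely that the sequence of buds read in clockwise order from the root-corner is a degree-code, i.e. that the blossoming mobile is balanced; conversely a balanced blossoming mobile determines a degree-code, hence a plane tree $A$ with $n$ edges, while erasing its buds gives the underlying mobile $B$. Again the two constructions are mutually inverse. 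Composing the two bijections thus obtained identifies well-corner-labelled mobiles with balanced blossoming mobiles, which finishes the proof. There is no real obstacle here: the only point that needs a (one-line) verification is the count of white corners of a mobile with $n+1$ edges, and everything else is a direct translation of the two standard encodings of plane trees.
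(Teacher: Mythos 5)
Your proof is correct and follows exactly the route the paper intends (the paper simply declares the lemma obvious from the fact that height- and degree-codes determine plane trees); your only addition is the explicit check that a bipartite unicellular map with $n+1$ edges has $n+1$ white corners, which is a worthwhile one-line verification that the paper leaves implicit.
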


\begin{figure}[ht!]\begin{center} \input{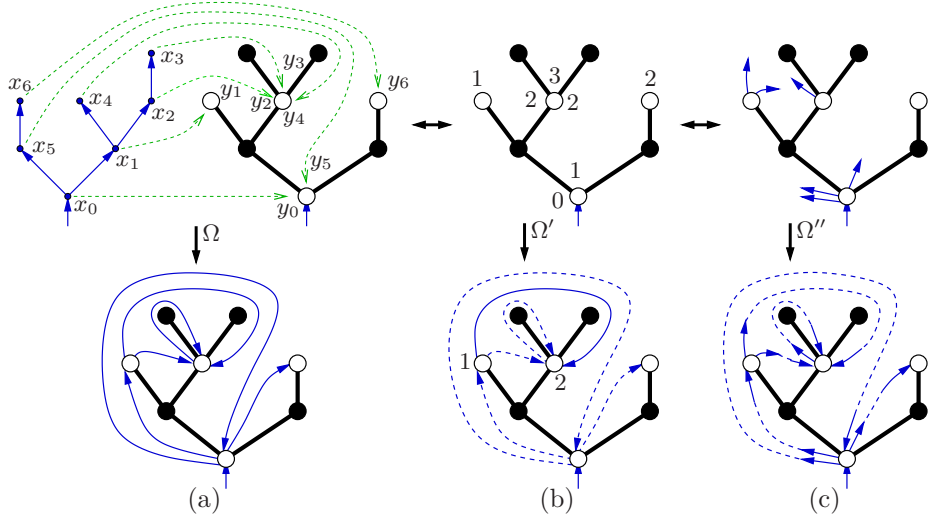}\caption{Three equivalent representations of a pair $(A,B)$ and descriptions of the folding step: (a) a pair consisting of a tree and a mobile; (b) a well-corner-labelled mobile; (c) a balanced blossoming mobile.}\label{fig:alternative} \end{center}\end{figure}

Lemma~\ref{lem:alternative-encoding} is illustrated in Figure~\ref{fig:alternative} (top part). By Theorem~\ref{thm:bij}, well-corner-labelled mobiles with $n+1$ edges, and balanced blossoming mobiles with $n+1$ edges are in bijections with covered maps. We now describe the folding step in terms of these objects.

Let $(A,B)$ be a pair made of a plane tree $A$ and a mobile $B$ with respectively $n$ and $n+1$ edges. Recall from Section~\ref{sec:proofs} the topological description of the \emph{folding step} $\Omega=\Lambda^{-1}$ for $(A,B)$:  if the vertices of the tree $A$ are denoted $v_0,v_1,\ldots,v_n$ in counterclockwise order around $A$ and the white corners of $B$ are denoted $y_0,y_1,\ldots,y_n$ in clockwise order around $B$, then the \emph{partially folded} map $N$ is obtained by gluing the first-corner $x_j$ of the vertex $v_j$ to the corner white $y_j$ of $B$ (see Figure~\ref{fig:folding}). The oriented map $(M,(I,O))=\Om(A,B)$ is then obtained from the partially folded map $N$ by deleting the edges and black vertices of $B$. 

Now let $(B,\ell)$ be the well-corner-labelled mobile,  where $\ell$ denotes the function associating a label to each white corner of $B$. 
For $j=1\ldots n$, we denote by $y_j'$ be the last corner of $B$ having label $\ell(y_j)-1$ appearing before $y_j$ in clockwise direction around $B$. Because  $(B,\ell)$ is well-labelled, the corner $y_j'$ always exists and appears between the root-corner and the corner $y_j$ in clockwise order around $B$.  The \emph{partially folded} map $N'$ associated to $(B,\ell)$ is defined as the map obtained from $B$ by adding $n$ edges sequentially: for $j=1\ldots n$, a directed edge $e_j$ of $N'$ is created from the corner $y_j'$ to the corner $y_j$ (there is a unique way to draw this edge without crossing). This procedure is represented in Figure~\ref{fig:alternative}(b). The (easy) proof of the following result is omitted:
\begin{prop}\label{prop:folding-labelled} If $(B,\ell)$ is the well-corner-labelled mobile corresponding with the pair $(A,B)$ (i.e. $\ell(y_0)\ell(y_1)\ldots\ell(y_n)$ is the height-code of $A$), then the partially folded maps $N$ and $N'$ coincide. 
\end{prop}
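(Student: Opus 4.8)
The plan is to compare the two constructions edge by edge, using the combinatorial description of the folding step $\Omega$ from Section~\ref{sec:proofs} and the coherent labelling of the pair $(A,B)$. Recall that $N$ is obtained by gluing, for each $j=0,\ldots,n$, the first corner $x_j$ of the $j$th vertex $v_j$ of $A$ to the $j$th white corner $y_j$ of $B$, and that the height-code of $A$ is $\ell(y_0)\ell(y_1)\cdots\ell(y_n)$, so $\ell(y_j)$ equals the height of $v_j$ in $A$. The first step is to translate the edges of $A$ into this labelled setting: each non-root vertex $v_j$ of $A$ has a unique parent, and in the counterclockwise-appearance order around $A$ that parent is precisely the vertex $v_{j'}$ where $j'$ is the largest index less than $j$ with height $\ell(y_j)-1$ (this is the standard bijection between plane trees and height-codes satisfying $c_0=0$, $0<c_{i+1}\le c_i+1$, as recalled before Lemma~\ref{lem:alternative-encoding}). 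Hence after gluing $x_j\mapsto y_j$, the tree-edge of $A$ joining $v_j$ to its parent becomes, in $N$, an edge joining the corner $y_j$ to the corner $y_{j'}$, where $j'$ is exactly the index selected in the definition of $N'$ (the last corner of label $\ell(y_j)-1$ appearing before $y_j$ clockwise). This is the core of the argument.

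Next I would check that the edges $e_j$ are attached at the correct corners and with the correct planar embedding. On the $A$-side, the edge to the parent leaves $v_j$ from its first corner $x_j$ (since $i_j=\alpha(o_j)$ is the ingoing half-edge of $v_j$ and $x_j$ is the corner following $i_j$), which is glued to $y_j$; on the other endpoint, the half-edges $o_j$ around each vertex of $A$ appear in counterclockwise order $o_1,\ldots,o_n$ matching the clockwise order $i_1,\ldots,i_n$ of the white half-edges of $B$ by condition (iv) of coherent labelling, so the cyclic order in which the new edges emanate from each white vertex of $B$ agrees in $N$ and $N'$. Therefore the two maps have the same underlying graph with the same rotation system, i.e.\ $N=N'$ as embedded maps. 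Since both constructions keep all edges and (black and white) vertices of $B$ and add exactly $n$ new edges, there is no discrepancy in the vertex or edge sets, and the rooting (at the root-corner $y_0=x_0$, equivalently at $i$) is the same in both. This gives $N=N'$.

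The main obstacle is the bookkeeping of planarity: one must verify that the edge $e_j$ drawn in $N'$ ``without crossing'' from $y_j'$ to $y_j$ lands in the same place as the image of the corresponding tree-edge of $A$ after folding — in particular that the intermediate corners $y_{j''}$ with $j'<j''<j$ do not interfere. This follows because such corners have label $\ge\ell(y_j)$ (no corner of label $\ell(y_j)-1$ lies strictly between $y_{j'}$ and $y_j$ by maximality of $j'$), which on the $A$-side means the vertices $v_{j''}$ all lie in the subtree rooted at $v_j$ or deeper, hence are enclosed by the edge $v_{j'}v_j$; one then checks that the sequential insertion order $j=1,2,\ldots,n$ in the definition of $N'$ respects exactly this nesting. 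Once this planarity compatibility is established, the identification $N=N'$ is immediate, and the proposition follows. As the authors note, this verification is routine, so the proof can reasonably be omitted.
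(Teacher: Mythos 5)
The paper does not actually contain a proof of Proposition~\ref{prop:folding-labelled} --- the authors explicitly state that ``the (easy) proof of the following result is omitted'' --- so there is nothing to compare your argument against; it has to stand on its own. It does: your argument is the natural (and surely the intended) one, and its core is sound. The crucial point is exactly the one you isolate: under the preorder/contour order $v_0,\ldots,v_n$, the parent of $v_j$ is the last vertex before $v_j$ whose height equals $c_j-1$, which is precisely the corner $y_j'$ selected in the definition of $N'$; combined with the fact that the ingoing half-edge $i_j$ sits at the first corner $x_j$ (glued to $y_j$) and that condition (iv) of the coherent labelling aligns the contour of $A$ with the reversed contour of the face of $B$, the edges of $A$ and the edges $e_j$ connect the same corners, and the non-crossing requirement inside the (single, simply connected) face of $B$ pins down the embedding up to isotopy.

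One local error should be corrected: you assert that the vertices $v_{j''}$ with $j'<j''<j$ ``lie in the subtree rooted at $v_j$ or deeper.'' This cannot be right, since every vertex of the subtree rooted at $v_j$ appears \emph{after} $v_j$ in the counterclockwise appearance order. These intermediate vertices are in fact the vertices of the subtrees rooted at the children of $v_{j'}$ that precede $v_j$ (the earlier siblings of $v_j$ and their descendants); they do all have height at least $c_j$, which is the inequality you actually use, and they are indeed enclosed in the region bounded by the edge $v_{j'}v_j$ and the portion of the contour between $x_{j'}$ and $x_j$, so the enclosure conclusion survives once the statement is fixed. With that correction, and granting the routine verification that the cyclic order of the edges $e_k$ emanating from a common tail corner $y_{j'}$ (forced by non-crossing via the positions of their distinct heads $y_k$) matches the counterclockwise order of the children of $v_{j'}$ in $A$, your proof is complete and at an appropriate level of detail for a statement the authors themselves deem easy enough to omit.
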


Let now  $\vec{B}$ be a balanced blossoming mobile. Let $\vec{B}'$ be the \emph{fully blossoming} mobile with ingoing and outgoing buds obtained from $\vec{B}$ by inserting an ingoing bud in each white corner of $\vec{B}$ following an edge of $B$ in clockwise order around the white vertex ($\vec{B}'$  is represented in solid lines in the bottom part of Figure~\ref{fig:alternative}(c)). 
Because the blossoming mobile $\vec{B}$ is balanced, the sequence of outgoing and ingoing buds in clockwise order around  $\vec{B}'$  (starting from the root-corner) is a parenthesis system (if outgoing and ingoing buds are seen respectively as opening and closing parentheses). Hence, there is a unique way of pairing each outgoing bud to an ingoing bud following it without creating any crossings. The \emph{partially folded} map $N'$ associated to the blossoming mobile $\bar{B}$ is the map obtained from $\vec{B}'$ by performing these pairings. The result is represented in Figure~\ref{fig:alternative}(c). 
The (easy) proof of the following proposition is omitted:
\begin{prop}\label{prop:folding-blossoming} If $\vec{B}$ is the blossoming mobile associated with the pair $(A,B)$ (i.e. the sequence of buds of $\vec{B}$ is the degree-code of $A$), then the partially folded maps $N$ and $N'$ coincide. 
\end{prop}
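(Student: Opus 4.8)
The plan is to compare the two partially folded maps \emph{edge by edge}, exploiting the fact that each of $N$ and $N'$ is obtained from the same underlying mobile --- namely $B$, equivalently $\vec B$ with its buds forgotten --- by adding exactly $n$ further edges, attached to white corners of $B$. Hence it suffices to check that the $n$ edges added in the blossoming construction are the same, as edges drawn on $B$, as the $n$ edges added by the folding step $\Omega$ of Section~\ref{sec:proofs}. Throughout I will use the coherent labelling of the pair $(A,B)$: by condition (iv) the white corners of $B$ occur in the order $y_0,y_1,\ldots,y_n$ in clockwise direction around $B$, and by definition of $\Omega$ the first corner $x_j$ of the $j$-th vertex $v_j$ of $A$ (vertices listed in counterclockwise order, $v_0$ the root) is glued to $y_j$.

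First I would make explicit the bud-word of $\vec B'$. By definition of $\vec B$ the sequence of its outgoing buds, read clockwise from the root-corner, is the degree-code $d_0,d_1,\ldots,d_n$ of $A$ (so there are $\sum_i d_i=n$ outgoing buds), and by definition of $\vec B'$ one ingoing bud is added to each white corner but one, the root-corner, so that the full bud-word has $n$ opening and $n$ closing symbols; balancedness of $\vec B$ is exactly the statement that every prefix of this word is a legal parenthesis word. Its unique non-crossing matching is then, by the classical correspondence between the Lukasiewicz code and the plane tree it encodes, nothing but the child-slot/parent-slot incidence of $A$: for each edge $e$ of $A$ joining a vertex $v_j$ to its $m$-th child $v_k$, the $m$-th outgoing bud lying in $y_j$ is matched with the ingoing bud lying in $y_k$. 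It then remains to observe that these same pairs are what $\Omega$ produces: once the corners $x_l$ are glued to $y_l$, the half-edge of $e$ incident to $v_k$ lands in the corner $y_k$ of $B$ (because $x_k$ is glued to $y_k$ and the parent half-edge of $v_k$ directly precedes $x_k$ in counterclockwise order around $v_k$), while its conjugate, incident to $v_j$, lands among the half-edges that $v_j$ contributes to $y_j$, precisely in the slot occupied by the $m$-th outgoing bud; and since $A$ is a plane tree drawn on the partially folded map without self-crossings, the $n$ edges it contributes are exactly those returned by the non-crossing bud-matching. Matching the two descriptions gives $N=N'$.

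The only genuinely delicate point --- and the reason the authors can call the verification routine but still worth stating --- is the bookkeeping of the successive corner-gluings $x_l\mapsto y_l$ needed to locate each tree half-edge relative to the white corners of $B$, together with the (equally standard) identification of ``non-crossing parenthesis matching'' with ``planar embedding of the tree edges''; everything else is the classical dictionary relating a plane tree, its degree-code, and a non-crossing matching. Alternatively, one may avoid this bookkeeping by invoking Lemma~\ref{lem:alternative-encoding} together with Proposition~\ref{prop:folding-labelled}: it then suffices to show that the blossoming partially folded map agrees with the one built from the corresponding well-corner-labelled mobile, which reduces to the elementary fact that the non-crossing bud-matching of $\vec B'$ and the ``last corner carrying label $\ell(y_j)-1$ before $y_j$'' rule single out the same pairs of corners of $B$ --- again a direct consequence of the fact that the degree-code and the height-code describe the same tree $A$ and hence the same parent function.
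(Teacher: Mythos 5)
The paper gives no proof to compare against here: it states explicitly that the (easy) proof of this proposition is omitted. Your argument is correct and is surely the intended one: since $N$ and $N'$ are both obtained from the mobile $B$ by attaching $n$ extra edges at white corners, it suffices to match the added edges, and the unique non-crossing matching of the bud-word of $\vec{B}'$ (a word of the shape $a^{d_0}\bar a\, a^{d_1}\bar a\cdots$, balanced precisely because $d_0,\ldots,d_n$ is a degree-code) recovers the parent function of $A$ by the classical Lukasiewicz correspondence, while the edges of $A$ drawn in $N$ realize exactly these corner-pairs without crossings, so uniqueness of the non-crossing matching forces $N=N'$. The only caution I would add to your write-up is the orientation reversal inherent in the construction (corners of $B$ are read clockwise, vertices of $A$ counterclockwise), which means that the $m$-th outgoing bud in the corner $y_j$ corresponds to the $(d_j+1-m)$-th child of $v_j$ in depth-first order rather than the $m$-th; this is exactly the bookkeeping you flag as delicate, and it does not affect the conclusion. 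Your alternative route through Lemma~\ref{lem:alternative-encoding} and Proposition~\ref{prop:folding-labelled} is also sound, though note that the latter is likewise stated without proof in the paper, so it trades one omitted verification for another.
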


We mention that the description of the bijection $\Psi$ in terms of blossoming mobiles (which is used in \cite{Bernardi-Fusy:dangulations,Bernardi-Fusy:Bijection-girth}) is convenient for controlling the degrees of the covered maps: the degree of the vertices in a covered map corresponds to the degree of the white vertices in the associated blossoming mobile. 

\subsection{Link with the bijection of Bouttier, Di Francesco and Guitter~\cite{BDFG:mobiles}.} 
In~\cite{BDFG:mobiles} Bouttier, Di Francesco and Guitter defined a bijection between bipartite maps and \emph{well-labelled mobiles}\footnote{Strictly speaking, the bijection in~\cite{BDFG:mobiles} only describes the planar case. But is was explained in~\cite{Chapuy:constellations} how to extend it to higher genera.}. A \emph{well-labelled mobile} is a well-corner-labelled mobile such that the labels coincide around each white vertices, that is, any two corners incident to the same vertex have the same label. An example is given in Figure~\ref{fig:induced-BDG}(c). Observe that well-labelled mobiles are equivalently defined as mobiles with a label $\ell(v)$ associated to each vertex $v$ satisfying:
\begin{itemize}
\item the root-vertex has label 0 and degree 1, while other white vertices have positive labels,
\item the increase between the labels of two consecutive white vertices in clockwise order around a black vertex is at most 1. 
\end{itemize}

We now show that the bijection of Bouttier \emph{et al.} can be obtained as  a specialization of the \emph{unfolding mapping} $\Lambda'={\Omega'}^{-1}$.
We first recall some definitions. The \emph{distance} between two vertices of a map is the minimum number of edges on paths between them. We denote by $d(v)$ the distance of a vertex $v$ from the root-vertex. It is easy that for bipartite maps, any pair of adjacent vertices $u,v$ satisfies  $|d(u)-d(v)|=1$. Thus one can define the \emph{geodesic orientation} by requiring that the value of $d$ increases in the direction of every edge. The geodesic orientation is indicated in Figure~\ref{fig:induced-BDG}(b). We now state the main result of this subsection.
\begin{prop}
The geodesic orientation of a bipartite map is left-connected. Moreover, the unfolding mapping $\Lambda'$ induces a bijection between the set of bipartite maps (with $n$ edges and genus $g$) endowed with their geodesic orientation and the set of well-labelled mobiles (with $n+1$ edges and genus $g$). This  bijection coincides with the bijection described for bipartite maps in~\cite{BDFG:mobiles}. 
\end{prop}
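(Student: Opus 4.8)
The plan is to prove the three assertions in order: (1) the geodesic orientation of a bipartite map is left-connected, (2) the image of a bipartite map under $\Lambda'$ is a well-labelled mobile and this gives a bijection, and (3) this bijection coincides with the one of~\cite{BDFG:mobiles}. The first step should follow from Lemma~\ref{lem:equiv-left-connected}: I need to show that for the backward function $\beta$ of the geodesic orientation, the sequence $\beta(h),\beta^2(h),\dots$ reaches the root-half-edge $h_0$ for every $h$. The key observation is that, since the geodesic orientation is obtained from the distance function $d$, every ingoing half-edge $h$ at a vertex $v$ comes from a vertex $u$ with $d(u)=d(v)-1$; chaining such half-edges (choosing at each vertex the ``leftmost'' ingoing half-edge, which is what $\beta$ does when restricted to ingoing half-edges) produces a strictly decreasing sequence of distances, which must terminate at the root-vertex. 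One then checks that the left-path condition (no ingoing half-edge incident to the left of the path) is automatically satisfied because $\beta$ skips exactly over the outgoing half-edges between two consecutive ingoing ones around a vertex. A small point to be careful about: the root-vertex may have degree $>1$ in a general bipartite map, so I should note that left-connectedness as defined only requires a left-path to the arrow pointing into the root-corner, and verify this matches the geodesic structure at the root.

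For step (2), I would apply Theorem~\ref{thm:bij} (via $\Lambda'$ on left-connected maps, or equivalently $\Psi$ restricted to the covered maps $\Gamma$ associates to geodesic orientations): the image is automatically a pair $(A,B)$ with $A$ a plane tree and $B$ a unicellular bipartite map of the right size and genus. The content is to identify which pairs arise, i.e.\ to show the image is exactly the set of well-labelled mobiles. Using the height-code description from Lemma~\ref{lem:alternative-encoding} and Proposition~\ref{prop:folding-labelled}, the pair $(A,B)$ corresponds to a well-corner-labelled mobile $(B,\ell)$ where the label of each white corner $y_j$ equals the height of the vertex $v_j$ in $A$. I need to show that, starting from the geodesic orientation, this label is constant around each white vertex of $B$ — equivalently, that the height in $A$ of the vertices of $M$ lying over a common vertex of $M$ after unfolding... more precisely, that the label $\ell(y_j)$ equals $d(w)$ where $w$ is the vertex of $M$ that the white vertex of $B$ incident to $y_j$ represents. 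This is the heart of the argument: the height of $v_j$ in the tree $A$ counts the number of ingoing half-edges crossed along the left-path from the root to $i_j$, and along a left-path in the geodesic orientation the distance $d$ increases by exactly $1$ at each ingoing half-edge (since consecutive vertices along a directed path have distances differing by $1$, and the orientation is geodesic so it increases). Hence $\ell(y_j)=d(w_j)$ depends only on the vertex $w_j$ of $M$, so it is constant over the white corners incident to a fixed white vertex of $B$. Conversely, given a well-labelled mobile, the orientation produced by $\Omega'$ is the geodesic one because the labels realize distances; this direction uses that a well-labelled mobile is the image of \emph{some} left-connected map by Theorem~\ref{thm:bij}, and that the label function forces the reconstructed orientation to increase distances, hence to be geodesic (distances in the folded map are read off as the minimal labels, a standard fact about these labelled objects). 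Counting of white/black vertices as $v(M)$ and $f(M)$ is immediate from Theorem~\ref{thm:bij}.

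Step (3), the coincidence with~\cite{BDFG:mobiles}, is mostly a matter of unwinding definitions. In~\cite{BDFG:mobiles} (and its genus-$g$ extension in~\cite{Chapuy:constellations}) the mobile is built by the rule: place a white vertex in each vertex of $M$ labelled by its distance to the root, a black vertex in each face, and join a black vertex to the white corners around its face according to the local ``decrease'' pattern of the labels. I would match this edge-creation rule with the cycle structure of $\pi_\bullet=\phi'_{|O'}$ (black vertices = faces of $M$) and $\pi_\circ=\sigma'_{|I'}$ (white vertices = vertices of $M$) from Section~\ref{sec:bijection}, using the labelled-mobile folding description of Proposition~\ref{prop:folding-labelled}: the edge $e_j$ added from $y_j'$ to $y_j$ in that proposition is exactly the ``geodesic'' edge of $M$ that decreases the distance, which is precisely the edge the \cite{BDFG:mobiles} construction records. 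Since both constructions are bijections with the same source and target and agree on the combinatorial incidence data, they are equal.

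\textbf{Main obstacle.} I expect the crux to be the claim in step (2) that the corner-label $\ell(y_j)$ depends only on the underlying vertex of $M$, equivalently that left-paths in the geodesic orientation increase the distance by exactly $1$ at each step and that the height in $A$ of $v_j$ equals the length of the (unique) left-path to $i_j$. Establishing this cleanly requires relating the tree structure of $A=\Lambda_1$ to left-paths in $(M,(I,O))$ — which the paper flags as the ``topological intuition'' that left-paths are preserved by unfolding (made rigorous in the proof of Lemma~\ref{lem:transitively} via $\beta(h)=\tilde\beta(h)$). I would reuse that identity: the height of $v_j$ in $A$ is the number of ingoing half-edges in the $\tilde\beta$-orbit of $i_j$ before reaching the root, which equals the corresponding count for $\beta$ in $M$, which by the geodesic property equals $d(w_j)$. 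Once this is in hand, everything else is bookkeeping.
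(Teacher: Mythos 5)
Your proposal is correct and follows essentially the same route as the paper: left-connectedness via Lemma~\ref{lem:equiv-left-connected} and the strict decrease of $d$ along the backward function, constancy of the corner labels via the fact that in the geodesic orientation every directed (left-)path from the root to a vertex $v$ has length exactly $d(v)$ so the unfolded copies of $v$ all sit at height $d(v)$ in $A$, and the converse by showing $\ell=d$ through the two inequalities you gesture at. The only point the paper spells out that you gloss over is that in the converse direction one must also check the folded map is bipartite (which follows from the parity of labels, since every edge joins labels $k$ and $k+1$); this is a one-line addition, not a gap.
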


\begin{proof}
Let $(M,(I,O))$ be a bipartite map endowed with its geodesic orientation. We first prove that the geodesic orientation is left-connected by using Lemma~\ref{lem:equiv-left-connected} concerning the backward function $\be$. Clearly, for any half-edge $h$ incident to a non-root vertex $v$, there exists an integer $p>0$ such that the half-edge $\beta^p(h)$ is incident to a vertex $u$ satisfying $d(u)=d(v)-1$ (because there are ingoing edges incident to $v$, and they all join $v$ to a vertex $u$ satisfying the property). Thus, there exists $q>0$ such that the half-edge $\beta^q(h)$ is incident to the root-vertex. Moreover, for any half-edge $h'$ incident to the root-vertex there exists an integer $r>0$ such that  the half-edge $\beta^r(h')$ is the root (because the root-vertex is only incident to outgoing half-edges). Therefore, by Lemma~\ref{lem:equiv-left-connected}, the geodesic orientation is left-connected. 
We now show that the corner-labelled mobile  $(B,\ell)=\Lambda'(M,(I,O))$ is  well-labelled. Let $v$ be a vertex of $M$ and let $v_1,\ldots,v_k$ be the vertices of the tree $A=\Lambda_1(M,(I,O))$ resulting from unfolding the vertex $v$. Clearly, any directed  path from the root-vertex to $v$ in $M$ has length $d(v)$. Hence, for all $i\in\{1,\ldots,k\}$ every directed  path from the root-vertex to $v_i$ in $A$ has length $d(v)$. Hence, the label $\ell$ of every corner of the white vertex $v$ of the mobile $B$ is equal to $d(v)$. Thus, the corner-labelled mobile $(B,\ell)$ is a well-labelled mobile.

Conversely, let $(B,\ell)$ be a well-labelled mobile, let $(M,(I,O))=\Omega'(B,\ell)$ be the corresponding left-connected map. We want to prove that $M$ is bipartite and $(I,O)$ is the geodesic orientation. By definition of the folding $\Omega'$ any edge of $M$ goes from a white vertex $u$ to a white vertex $v$ satisfying $\ell(v)=\ell(u)+1$. Hence, reasoning on the parity of labels shows that $M$ is bipartite. In order to prove that $(I,O)$ is the geodesic orientation, it suffices to prove that the label function $\ell$ is equal to the distance function $d$. Let $v$ be a non-root vertex. On one hand, one gets $d(v) \geq \ell(v)$ from the fact that labels cannot decrease by more than one when following an edge of $M$ (hence the root-vertex cannot be reached by following less than $\ell(v)$ edges). On the other hand, one gets $d(v)\leq \ell(v)$ from the fact that any non-root vertex of $M$ is adjacent to a vertex having a smaller label (by definition of the folding step $\Omega'$). Thus  $d=\ell$ and the orientation is geodesic.
\end{proof}

\begin{figure}[ht!]\begin{center} \input{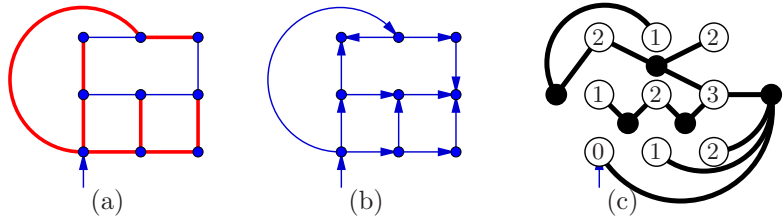}\caption{(a) The rightmost BFS tree. (b) The geodesic orientation. (c) The associated well-labelled mobile.}\label{fig:induced-BDG} \end{center}\end{figure}

In the remaining of this section, we complete the picture by characterizing the unicellular submap of a bipartite map which corresponds to the geodesic orientation (by the orientation step $\Delta$).
A spanning tree is said \emph{BFS} (for \emph{Breadth-First-Search}) if for any vertex $v$, the distance $d(v)$ is equal to the height in the spanning tree. 
\begin{definition}
The \emph{rightmost BFS tree} is the spanning tree $T$ obtained by the following procedure:\\
\titre{Initialization:} Set every vertex to be \emph{alive}. Set the tree $T$ as the tree containing the root-vertex of $M$ and no edge.\\ 
\titre{Core:} Consider the alive vertex $v$ which has been in the tree $T$ for the longest time and set it dead.
Inspect the half-edges incident to $v$ in counterclockwise order (starting from the root if $v$ is the root-vertex, and starting from the half-edge following the edge of $T$ leading $v$ to its parent otherwise) and whenever a half-edge leads to a vertex not in the tree $T$ add this vertex and the edge to $T$.\\
Repeat until all vertices are dead.\\
\titre{End:} Return the spanning tree $T$.
\end{definition}

The rightmost BFS tree is indicated in Figure~\ref{fig:induced-BDG}(a). We omit the (easy) proof of the following result.
\begin{prop}
Let $(M,S)$ be a bipartite covered map, and let $(M,(I,O))=\Delta(M,S)$ be the associated left-connected map. The orientation $(I,O)$ is geodesic if and only if $M_{|S}$ is the rightmost BFS tree of $M$.
\end{prop}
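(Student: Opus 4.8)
The plan is to prove the equivalence between the two conditions by showing that the orientation step $\Delta$ and its inverse $\Gamma$ transport the ``rightmost BFS tree'' property on the primal side to the ``geodesic orientation'' property on the oriented side. Since the previous proposition already establishes that the unfolding mapping $\Lambda'$ sends bipartite maps with their geodesic orientation to well-labelled mobiles, and since $\Psi=\Lambda\circ\Delta$, the content of the present statement is entirely about the orientation step: we must show that among all unicellular connecting submaps $M_{|S}$ of a bipartite map $M$, the one producing the geodesic orientation under $\Delta$ is exactly the rightmost BFS tree. (In particular the submap is a spanning tree, which is consistent with $B$ being well-labelled: the number of black vertices of $B$ equals $f(M)$, so $g(M_{|S})=0$.)

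First I would recall the topological description of $\Delta$: one turns around the submap $T=M_{|S}$ counterclockwise starting from the root, orienting each edge of $T$ in the direction it is first traversed, and orienting each non-tree edge so that its ingoing half-edge is met before its outgoing one. The appearance order $\prec_S$ is precisely the order in which half-edges are discovered during this tour. The key observation for the forward direction ($M_{|S}$ rightmost BFS $\Rightarrow$ geodesic orientation) is that when $T$ is a BFS tree, the tour around $T$ visits vertices in a way compatible with distance layers, so that a tree edge from $u$ to its child $v$ is oriented from $u$ to $v$, i.e. from distance $d(u)$ to $d(u)+1=d(v)$. For non-tree edges $e=\{x,y\}$ with $d(x)=d(y)-1$ (possible since $M$ is bipartite and $T$ is BFS), one must check that the half-edge at $x$ is met before the half-edge at $y$ during the tour; this is where the \emph{rightmost} rule enters — it guarantees that at the moment vertex $x$ is processed, the edge $e$ was already inspected (and its endpoint $y$ found to be already in $T$ at the same or next level), so the half-edge incident to $x$ is visited before the tour reaches the level-$d(y)$ corner at $y$. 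Hence every edge is oriented from lower to higher distance, i.e. $(I,O)$ is geodesic.

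For the converse, I would argue by uniqueness. Suppose $(I,O)=\Delta(M,S)$ is the geodesic orientation. By Theorem~\ref{thm:Delta}, $\Delta$ is a bijection, so $M_{|S}=\Gamma(M,(I,O))$ is determined by the geodesic orientation. It then suffices to show that the rightmost BFS tree $T_0$, when fed through $\Delta$, also yields the geodesic orientation — but that is exactly the forward direction just proved — so $\Gamma$ applied to the geodesic orientation returns $T_0$, forcing $M_{|S}=T_0$. Thus the two conditions are equivalent. The main obstacle will be making the ``rightmost'' bookkeeping precise: one has to track the interleaving between (a) the order in which vertices are dequeued in the BFS procedure, (b) the counterclockwise inspection of half-edges around each vertex, and (c) the order $\prec_S$ induced by the tour around $T_0$, and verify that for every non-tree edge the ingoing endpoint is reached first. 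This is a routine but somewhat delicate induction on the BFS processing order, which is why the paper (reasonably) omits it; in a full write-up I would phrase it as: ``at the moment the tour around $T_0$ discovers a half-edge $h$ incident to a vertex at distance $k$, every vertex at distance $<k$, and every vertex at distance $k$ processed earlier in the BFS, has already had all its incident corners visited,'' and deduce the half-edge orientation of each non-tree edge from this invariant.
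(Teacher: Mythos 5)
The paper omits its own proof of this proposition, so I can only assess your plan on its merits. Your overall architecture is right: it suffices to prove the forward direction (rightmost BFS tree $\Rightarrow$ geodesic orientation) and to deduce the converse from the injectivity of $\Delta$ (Theorem~\ref{thm:Delta}) together with the left-connectedness of the geodesic orientation established in the preceding proposition. But the forward direction, which is the entire content of the statement, contains a real error. For an edge $e=\{x,y\}$ \emph{not} in $\MS$, the rule of $\Delta$ is that the half-edge of $e$ met \emph{first} during the tour is the \emph{ingoing} one ($h\notin S$ is in $I$ iff $h\lS\al(h)$), and an ingoing half-edge sits at the \emph{head} of the oriented edge (see the definition of directed paths and left-paths, where the ingoing half-edge $h_i$ is incident to the vertex the $i$th edge points to). Hence, to obtain the geodesic orientation $x\to y$ with $d(y)=d(x)+1$, you need the half-edge of $e$ incident to the \emph{deeper} endpoint $y$ to appear first in the tour --- the opposite of what you assert (``the half-edge at $x$ is met before the half-edge at $y$''). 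You have tacitly applied to non-tree edges the convention that holds for tree edges (first-met endpoint $=$ tail), but for non-tree edges the convention is reversed; as written, your argument would orient every non-tree edge from higher to lower distance and hence prove the orientation is \emph{not} geodesic.

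Second, the invariant you propose for the bookkeeping is false, so even after fixing the sign the induction would not go through. The tour around $\MS$ is a depth-first contour walk, not a breadth-first one: on the $4$-cycle $r_0,a,c,b$ (root half-edge at $r_0$ toward $a$, rightmost BFS tree $\{r_0a,\,r_0b,\,ac\}$) the tour reaches a corner of $c$ (distance $2$) at its third step, long before all corners of $r_0$, $a$ and $b$ (distances $0$ and $1$) have been swept, contradicting ``every vertex at distance $<k$ has already had all its incident corners visited.'' In fact it is precisely this DFS-like behaviour that makes the correct claim true in the example: the tour plunges down to $c$ through $a$ and crosses the corner of $c$ facing $b$ before it ever returns to $b$, so the deeper half-edge of the non-tree edge $bc$ comes first and the edge is oriented $b\to c$. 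A workable argument must compare, for each non-tree edge, the first passage of the tour at the relevant corner of its deep endpoint with that at its shallow endpoint, using the rightmost rule; alternatively, and more cleanly, one can run the inverse procedure $\Gamma$ on the geodesic orientation and check that it reconstructs exactly the rightmost BFS tree, since $\Gamma$ puts an edge in $S$ precisely when its first-met half-edge is outgoing, i.e.\ lies at the shallower endpoint.
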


\smallskip


\section{Duality.}\label{sec:duality}
Recall from Section~\ref{sec:shuffles} that the dual of a covered map is a covered map. In this section, we explore the properties of the bijection $\Psi$ with respect to duality.  Throughout this section, we consider a covered map $(M,S)$, where the map $M=(H,\si,\al)$ has root $r$ and face-permutation $\phi=\si\al$. We denote $(M,(I,O))=\Delta(M,S)$ and $(A,B)=\Psi(M,S)$. 


\begin{lemma}[Duality at the orientation step]\label{lem:dualIO}
The oriented map associated to the dual covered map is the dual oriented map, that is to say, $\Delta(M^*,\bS)=(M^*,(O,I))$. 
\end{lemma}

Lemma~\ref{lem:dualIO} is illustrated in Figure~\ref{fig:dual+orientation}. 

\begin{proof}
Recall that the  the submaps $M_{|S}$ and $M^*_{|\bS}$ have the same motion functions, hence define the same appearance order on $H$. Thus, Lemma~\ref{lem:dualIO} immediately follows from the definition of the mapping~$\Delta$. 
\end{proof}

\begin{figure}[ht!]\begin{center} \input{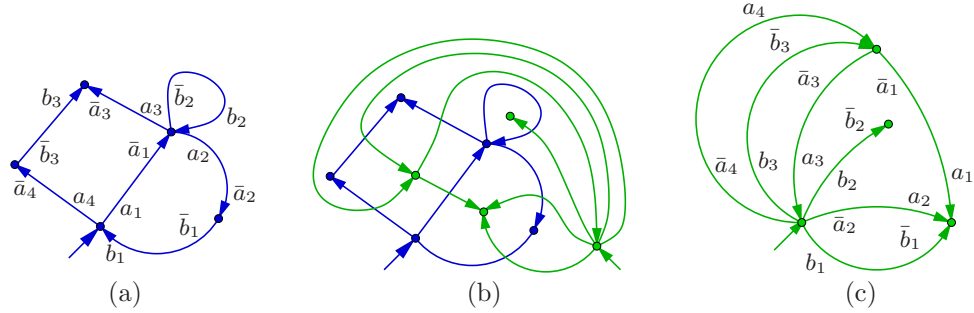}\caption{(a) The oriented map  $(M,(I,O))=\Delta(M,S)$ associated to the covered map represented in Figure~\ref{fig:dual+covered}(a). (b) Topological construction of the dual:  each oriented edge of $M$ is crossed by the the dual oriented edge of $M^*$ \emph{from left to right}. (c) The oriented map $(M^*,(O,I))$.}\label{fig:dual+orientation} \end{center}\end{figure}

We now explore the properties of the unfolding step with respect to duality. We denote $A=(H,\tau,\al)=\Psi_1(M,S)$ and $B=(H',\pi,\al')=\Psi_2(M,S)$, where $H'$ stands for $H\cup\{i,o\}$ and $i$ is the root of the mobile $B$. We also denote by $A^\st=(H,\tau^\st,\al)=\Psi_1(M^*,\bS)$ and $B^\st=(H',\pi^\st,\al')=\Psi_2(M^*,\bS)$, the plane tree and mobile associated to the dual covered map $(M^*,\bS)$. We shall prove the existence of two independent mappings $\Upsilon$ and $\Xi$ such that $A^\st=\Upsilon(A)$ and  $B^\st=\Xi(B)$.  In words, the duality acts \emph{component-wise} on the plane tree and the mobile.\\

\begin{figure}[ht!]\begin{center} \input{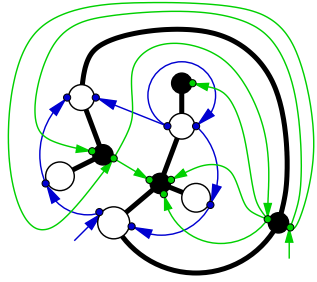}\caption{Simultaneous unfolding of the oriented map of Figure~\ref{fig:dual+orientation} and of its dual.}\label{fig:dual+unfolded} \end{center}\end{figure}

 
\begin{prop}[Duality and the mobile] \label{prop:dualmobile}
Let $(M,S)$ be a covered map and let $(M^*,\bS)$ be the dual covered map. If the mobile $B=\Psi_2(M,S)$ is denoted $(H',\pi,\al')$ and has root $i$, then the mobile $B^\st=\Psi_2(M^*,\bS)$ is the map $(H',\pi^{-1},\al')$ with root $o=\al(i)$. 
\end{prop}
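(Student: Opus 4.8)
The plan is to reduce the statement to a direct comparison of the permutation data produced by the unfolding step $\Lambda$ on $(M,(I,O))$ and on its dual. By Corollary~\ref{cor:dual-covered} the dual $(M^*,\bS)$ is again a covered map, and by Lemma~\ref{lem:dualIO} one has $\Delta(M^*,\bS)=(M^*,(O,I))$, which is left-connected by Proposition~\ref{prop:returnsleftconnected}; hence $B^\st=\Psi_2(M^*,\bS)=\Lambda_2(M^*,(O,I))$, and everything comes down to unfolding $(M^*,(O,I))$. The crucial observation is that $M^*=(H,\phi,\al)$ has vertex-permutation $\phi$ and face-permutation $\phi\al=\si$, so that duality exchanges the roles of $\si$ and $\phi$, and hence of the ingoing and outgoing sides.

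First I would unwind the definition of $\Lambda_2$ on $(M^*,(O,I))$ using fresh symbols: adjoin a new ingoing half-edge $i^\st$ and a new outgoing half-edge $o^\st$; let $\mu'$ be obtained from $\phi$ by inserting $i^\st$ just before the root $r$ and creating the cycle $(o^\st)$ (this is the analogue of $\si'$ for $M^*$), and $\nu'$ be obtained from $\si$ by inserting $o^\st$ just before $r$ and creating the cycle $(i^\st)$ (the analogue of $\phi'$). Then $B^\st$ has half-edge set $H\cup\{i^\st,o^\st\}$, edge-permutation the involution extending $\al$ by $i^\st\leftrightarrow o^\st$, root $i^\st$, and vertex-permutation $\pi^\st=\pi_\circ^\st(\pi_\bu^\st)^{-1}$ with $\pi_\circ^\st=\mu'_{|O\cup\{i^\st\}}$ and $\pi_\bu^\st=\nu'_{|I\cup\{o^\st\}}$ (both viewed as permutations of the full half-edge set acting as the identity off the relevant subset). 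Consider now the relabelling $\la$ fixing $H$ pointwise with $\la(i^\st)=o$ and $\la(o^\st)=i$: it turns the edge-permutation of $B^\st$ into $\al'$ and the root $i^\st$ into $o=\al'(i)$, so it only remains to prove $\la\pi^\st\la^{-1}=\pi^{-1}$.

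The heart of the argument is the identity $\la\mu'\la^{-1}=\phi'$ and $\la\nu'\la^{-1}=\si'$: applying $\la$ to "$\mu'=\phi$ with $i^\st$ inserted before $r$ and the cycle $(o^\st)$ added" produces exactly "$\phi$ with $o$ inserted before $r$ and the cycle $(i)$ added", which is the definition of $\phi'$ from Section~\ref{sec:bijection}, and symmetrically for $\nu'$ and $\si'$. Since moreover $\la$ maps $O\cup\{i^\st\}$ onto $O'$ and $I\cup\{o^\st\}$ onto $I'$, conjugation commutes with restriction and gives $\la\pi_\circ^\st\la^{-1}=\phi'_{|O'}=\pi_\bu$ and $\la\pi_\bu^\st\la^{-1}=\si'_{|I'}=\pi_\circ$. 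Hence, recalling $\pi=\pi_\circ\pi_\bu^{-1}$,
$$\la\pi^\st\la^{-1}=\bigl(\la\pi_\circ^\st\la^{-1}\bigr)\bigl(\la\pi_\bu^\st\la^{-1}\bigr)^{-1}=\pi_\bu\pi_\circ^{-1}=\bigl(\pi_\circ\pi_\bu^{-1}\bigr)^{-1}=\pi^{-1},$$
and, together with $\la(i^\st)=o$, this identifies $B^\st$ with $(H',\pi^{-1},\al')$ rooted at $o$, as claimed.

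I expect the only real obstacle to be bookkeeping: one must be careful about which of the two adjoined permutations plays the role of "$\si'$" and which plays "$\phi'$" for the dual map, and about the standing abuse of notation whereby $\pi_\circ,\pi_\bu$ and their starred counterparts are regarded as permutations of the full half-edge set acting as the identity off $I'$, resp.\ $O'$. Once these conventions are pinned down, the rest is a routine unwinding of the definitions of Section~\ref{sec:bijection}.
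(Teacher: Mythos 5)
Your proof is correct and follows essentially the same route as the paper's: both exploit that $M^*$ swaps $\si$ and $\phi$ while Lemma~\ref{lem:dualIO} swaps $I$ and $O$, identify (after sending $i^\st\mapsto o$, $o^\st\mapsto i$) the auxiliary permutations of the dual with $\phi'$ and $\si'$, and conclude $\pi^\st_\circ=\pi_\bu$, $\pi^\st_\bu=\pi_\circ$, hence $\pi^\st=\pi_\bu\pi_\circ^{-1}=\pi^{-1}$ with root $o$. The only difference is presentational (you make the relabelling $\la$ explicit where the paper simply chooses $i^\st=o$, $o^\st=i$), and your final chain of equalities is in fact cleaner than the one printed in the paper.
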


Proposition~\ref{prop:dualmobile} is illustrated in  Figure~\ref{fig:dual-mobile}. It implies that the mobile $B^\st$ is entirely determined by the mobile $B$.

\begin{proof} 
The map $M$ has vertex-permutation $\si$ and face-permutation $\phi$, while the map $M^*$ has vertex-permutation $\si^\st=\phi$ and face-permutation $\phi^\st=\si$. We denote $\Delta(M,S)=(M,(I,O))$, so that $B=\Lambda(M,(I,O))$ and $B=\Lambda(M^*,(I^\st,O^\st))$, where $I^\st=O$ and $O^\st=I$ by Lemma~\ref{lem:dualIO}. We adopt the notations $i$, $o$, $I'$, $O'$, $\si'$, $\phi'$, $\pi_\circ$, $\pi_\bu$, $\pi$ of Section~\ref{sec:bijection} for defining $B$ and adopt the corresponding notations $i^\st$, $o^\st$, ${I'}^\st$, ${O'}^\st$, ${\si'}^\st$, ${\phi'}^\st$, ${\pi_\circ}^\st$, ${\pi_\bu}^\st$, ${\pi}^\st$ for defining $B^\st$. We choose $i^\st=o$ and $o^\st=i$, so that ${I'}^\st\equiv I^\st\cup\{i^\st\}=O'$, $~{O'}^\st\equiv O^\st\cup\{o^\st\}=I'$, $~{\si'}^\st=\phi'$ and $~{\phi'}^\st=\si'$. From this, it follows that  $\pi^\st_\circ\equiv{\si'^\st}_{|{I'}^\st}={\phi'}_{|O'}=\pi_\bu$ and $\pi^\st_\bu\equiv{\phi'}^\st_{|{O'}^\st}={\si'}_{|I'}=\pi_\circ$ and finally $\pi^\st\equiv\pi^\st_\bu {\pi^\st_\circ}^{-1}=\pi_\circ \pi_\bu^{-1}=\pi^{-1}$. Lastly, the root of $B^\st$ is $i^\st=o$.
\end{proof}

\begin{figure}[ht!]\begin{center} \input{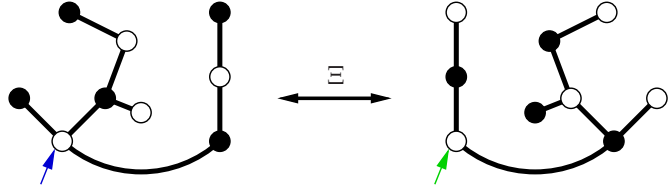}\caption{The mapping $\Xi$ between the mobile $B=\Psi_2(M,S)$ associated to the covered map $(M,S)$ of Figure~\ref{fig:dual+orientation} and the mobile $B^\st=\Psi_2(M^*,\bS)$ associated to the dual covered map.}\label{fig:dual-mobile} \end{center}\end{figure}

We now explicit the relation between the trees  $A$ and $A^\st$ in terms of their codes.

\begin{prop}[Duality and the tree]\label{prop:ups}
If the height-code of $A=\Psi_1(M,S)$ is $c_0,\ldots,c_n$, then the degree-code of $A^\st=\Psi_1(M^*,\bS)$ is  $d_0,\ldots,d_n$, where $d_{n-j}=c_{j}+1-c_{j+1}$ for $j=1,\ldots,n\!-\!1$ and $d_0=c_n$.
\end{prop}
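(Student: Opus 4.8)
The plan is to combine the two duality statements already at hand --- Proposition~\ref{prop:dualmobile} (the mobile is dualized by inverting $\pi$ and swapping the root $i\leftrightarrow o$) and the combinatorial description of the unfolding step (equations~\eqref{eq:pibullet}--\eqref{eq:pitau}) --- with the alternative encodings of the pair $(A,B)$ from Section~\ref{sec:alternative-folding}. The key observation is that the height-code of $A$ and the degree-code of $A^\st$ are both read off by touring the \emph{same} underlying set of white corners, once around $B$ in clockwise order and once around $B^\st=(H',\pi^{-1},\al')$, which by Proposition~\ref{prop:dualmobile} is the same cyclic structure traversed in the opposite direction and rerooted at $o$. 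So the relation $d_{n-j}=c_j+1-c_{j+1}$ should come out as a direct bookkeeping identity between (i) the height-code of $A$, recorded at the white corners $y_0,y_1,\dots,y_n$ of $B$ in clockwise order, and (ii) the degree-code of $A^\st$, recorded at the white corners of $B^\st$, which are the white corners of $B$ visited in the reverse (counterclockwise) order starting from $o$.

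First I would set up notation: by Lemma~\ref{lem:alternative-encoding}, encoding $A$ via its height-code attaches the labels $\ell(y_j)=c_j$ to the $n+1$ white corners $y_0,\dots,y_n$ of $B$ listed in clockwise order (Proposition~\ref{prop:mobile} tells us $\psi^{-1}_{|I'}=(i,i_1,\dots,i_n)$, so $y_j$ follows $i_j$). Dually, encoding $A^\st$ via its degree-code attaches to the white corners of $B^\st$, listed in clockwise order starting from the root-corner of $B^\st$, the numbers $d_0,\dots,d_n$, where $d_k$ is the number of children of the $k$th vertex of $A^\st$ in counterclockwise order. Now the crucial step: identify the white corners of $B^\st$ with those of $B$. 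Since $\pi^\st=\pi^{-1}$ and the white/black bipartition is preserved (half-edges in $I'$ stay white), the face-permutation of $B^\st$ is $\psi^\st=\pi^{-1}\al'=\al'\psi^{-1}\al'$-conjugate of $\psi^{-1}$, so a clockwise tour of $B^\st$ from its root-corner is exactly a counterclockwise tour of $B$ from the corner $y_n$ (equivalently, from the corner just after $o$). Thus the $k$th white corner of $B^\st$ (clockwise) is the $(n-k)$th white corner $y_{n-k}$ of $B$ (clockwise), for $k=1,\dots,n$, with a shift at $k=0$ corresponding to the root.

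With this dictionary in place, the proof reduces to the following identity about the height-code $c_0,\dots,c_n$ of a plane tree: the number of children of the $(n-j)$th vertex (in counterclockwise order) of the tree whose vertices-in-order carry the reversed label sequence equals $c_j+1-c_{j+1}$. Concretely, in a plane tree the degree-code entry $d_k$ (number of children of $v_k$) and the height-code relate by: $v_{k+1},\dots,v_{k+d_k}$ are the children of $v_k$ when one reads in counterclockwise (Lukasiewicz) order --- but here the two trees $A$ and $A^\st$ are traversed with \emph{opposite} orientations (one counterclockwise, one clockwise) because of the orientation-reversal in passing to the dual mobile. Touring $A$ clockwise visits its vertices as $v_0,v_n,v_{n-1},\dots,v_1$, and the height of $v_j$ is $c_j$; the number of ``new descents'' when going from corner $y_{j+1}$ back to corner $y_j$ in the clockwise-for-$B^\st$ (= counterclockwise-for-$B$) direction is precisely $c_j - c_{j+1} + 1$ (one step down the tree $A$ if $c_{j+1}=c_j+1$, i.e.\ zero children, versus $c_j - c_{j+1}+1$ children otherwise), and the boundary term at $j=0$ gives $d_0=c_n$ since the root of $A$ has $c_n$ as... this is where I would be careful.

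The main obstacle I anticipate is \emph{keeping the orientations and the root-shift straight}: the dual mobile is rerooted at $o$ and traversed with reversed orientation, so the off-by-one indexing between ``clockwise around $B$'' and ``clockwise around $B^\st$'', together with the convention that $y_0$ is the root-corner and the root-vertex of $A$ is the $0$th vertex, has to be tracked precisely to land on the stated formula $d_{n-j}=c_j+1-c_{j+1}$ for $1\le j\le n-1$ and $d_0=c_n$ rather than some shifted variant. Once the corner dictionary $y_j^{(B^\st,\text{cw})} \leftrightarrow y_{n-j}^{(B,\text{cw})}$ is pinned down rigorously (using Proposition~\ref{prop:dualmobile} and the fact that the motion functions of $(M,S)$ and $(M^\st,\bS)$ agree so that the appearance orders are reverse-compatible), the remainder is the routine height-code/degree-code reversal identity for plane trees, which can be checked directly from the characterizations of height-codes and degree-codes recalled in Section~\ref{sec:alternative-folding} (a sequence $c_0,\dots,c_n$ with $c_0=0$, $0<c_{i+1}\le c_i+1$ gives rise, via $d_{n-j}=c_j+1-c_{j+1}$, $d_0=c_n$, to a sequence satisfying $\sum(d_i-1)=-1$ and the partial-sum condition --- a short telescoping computation).
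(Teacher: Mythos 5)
Your overall strategy --- read the height-code of $A$ off the white corners of $B$, read the degree-code of $A^\st$ off the white corners of $B^\st$, and translate between the two via Proposition~\ref{prop:dualmobile} --- is in the right spirit (it is essentially the geometric content of Figure~\ref{fig:dual-tree2}(b)), but it rests on a false identification of corners. You assert that ``the white/black bipartition is preserved (half-edges in $I'$ stay white)''. It is not: the mobile $B^\st=(H',\pi^{-1},\al')$ is rooted at $o\in O'$, and since the root-vertex of a mobile is white by convention, the white vertices of $B^\st$ are exactly the \emph{black} vertices of $B$ (in the notation of the paper's proof of Proposition~\ref{prop:dualmobile}, ${I'}^\st=O'$ and ${O'}^\st=I'$). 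Consequently the degree-code of $A^\st$ is carried by the black corners of $B$, which \emph{interlace} with the white corners $y_0,\dots,y_n$ carrying the height-code of $A$; they do not coincide with them in reverse order. This is visible in the target formula itself: $d_{n-j}=c_j+1-c_{j+1}$ involves \emph{two consecutive} heights, precisely because the relevant black corner sits between $y_j$ and $y_{j+1}$, whereas a dictionary matching single corners to single corners could only ever produce a relation of the form $d_{n-j}=f(c_j)$.

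Beyond this, your proposal defers the substantive step to a ``routine height-code/degree-code reversal identity'', which is not well-posed as stated (the reversed sequence $c_n,\dots,c_0$ is generally not a height-code, so ``the tree whose vertices-in-order carry the reversed label sequence'' is undefined), and in any case the real work lies elsewhere: one must show that the children of the vertex of $A^\st$ attached between $y_j$ and $y_{j+1}$ are precisely the ingoing half-edges of $A$ encountered between the outgoing half-edges $o_j$ and $o_{j+1}$ during the face-tour of $A$. In the paper this is Lemma~\ref{lem:dualtree} (${\tau'}^\st=\varphi'{\varphi'}^{-1}_{|O'}$), proved by a direct permutation computation, combined with Lemma~\ref{lem:orderAA*} (${\varphi'}_{|O'}=(\al'{\varphi'}^\st\al')^{-1}_{|O'}$) for the index reversal. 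Your proposal contains the germ of the second lemma (a clockwise tour of $B^\st$ is a reversed tour of $B$), but nothing playing the role of the first, which is where the telescoped quantity $c_j+1-c_{j+1}$ actually acquires its meaning as a vertex degree of $A^\st$.
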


Recall that a tree is completely determined by its height-code or by its degree-code. Hence, Proposition~\ref{prop:ups} shows that the tree $A^\st$ is entirely determined by the tree $A$.  Observe that the mapping $A\mapsto A^\st$ is an involution since duality of covered map is an involution. A topological version of this mapping is illustrated in Figure~\ref{fig:dual-tree2}(b), where the two trees $A$ and $A^\st$ are represented simultaneously in the way they interlace around the mobile's face. The rest of this section is devoted to the proof of Proposition~\ref{prop:ups}.\\

\begin{figure}[ht!]\begin{center} \input{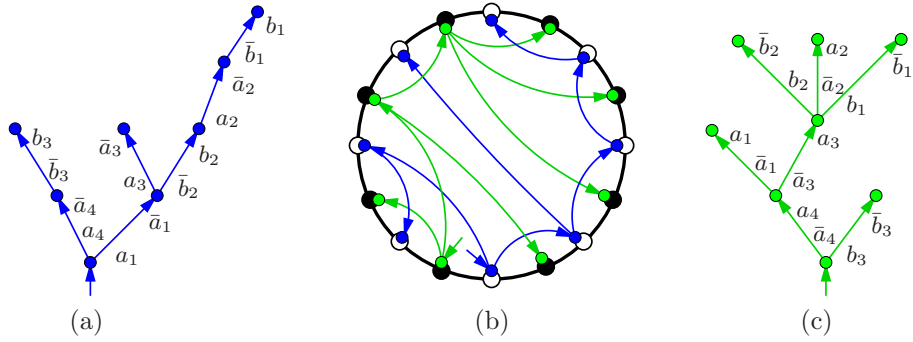}\caption{(a) The plane tree $A=\Psi_1(M,S)$ associated to the covered map of Figure~\ref{fig:dual+orientation}. (b) Topological construction of the tree $A^\st=\Psi_1(M^*,\bS)$: in the mobile face, the trees $A$ and $A^\st$ interlace in such a way that each edge of $A$ is crossed by exactly one edge of $A^\st$. (c) The plane tree $A^\st$.}\label{fig:dual-tree2} \end{center}\end{figure}

We denote by $t$ the root of the tree $A=(H,\tau,\al)$ and by $t^\st$ the root of the tree $A^\st=(H,\tau^\st,\al)$. 
We also adopt the notations  $\si'$, $\phi'$, $\pi_\circ$,  $\pi_\bu$, $\pi$, $\tau'$ of Section~\ref{sec:bijection} for the tree $A$ and adopt the corresponding notations ${\si'}^\st$, ${\phi'}^\st$, ${\pi_\circ}^\st$, ${\pi_\bu}^\st$, ${\pi}^\st$, $\tau'^\st$ for the tree $A^\st$. Lastly, we denote $\varphi=\tau\al$, $~\varphi'=(i,o)\tau'\al'$,  $~\varphi^\st=\tau^\st\al$ and $\varphi'^\st=(i,o)\tau'^\st\al'$. Recall that Lemma~\ref{lem:not-alone2} describes the (simple) link existing between the permutations $\tau$ and $\tau'$ and between $\varphi$  and $\varphi'$.

\begin{lemma}\label{lem:orderAA*}
The permutations $\varphi'$ and ${\varphi'}^\st$ are related by ${\varphi'}_{|O'}=(\al'{\varphi'}^\st\al')^{-1}_{|O'}$. 
\end{lemma}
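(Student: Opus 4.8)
**Proof plan for Lemma~\ref{lem:orderAA*}.**

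\textbf{Proof plan for Lemma~\ref{lem:orderAA*}.}
The plan is to translate this identity about $\varphi'$ and ${\varphi'}^\st$ (which live on the trees $A$ and $A^\st$) into an identity about the face-permutations of the mobiles $B$ and $B^\st$ via Lemma~\ref{lem:order-coincide}, and then to feed in Proposition~\ref{prop:dualmobile}, which already gives an explicit description of $B^\st$ in terms of $B$. No new geometric idea is needed; the statement is a formal consequence of those two results together with elementary manipulations of restrictions of permutations.

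First I would recall the bookkeeping from the proof of Proposition~\ref{prop:dualmobile}: with the choices $i^\st=o$ and $o^\st=i$ one has ${I'}^\st=O'$, ${O'}^\st=I'$, ${\al'}^\st=\al'$, and $\pi^\st=\pi^{-1}$. Writing $\psi=\pi\al'$ and $\psi^\st=\pi^\st\al'=\pi^{-1}\al'$ for the face-permutations of $B$ and $B^\st$, and using $\pi=\psi\al'$ (so $\pi^{-1}=\al'\psi^{-1}$), this gives $\psi^\st=\al'\psi^{-1}\al'$. Note also that $(i^\st,o^\st)=(i,o)$ as transpositions, so $\varphi'^\st=(i,o)\tau'^\st\al'$ is exactly the permutation playing the role of ``$\varphi'$'' for the pair $(A^\st,B^\st)$.

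Then I would apply Lemma~\ref{lem:order-coincide} to the pair $(A,B)=\Lambda(M,(I,O))$ and to the pair $(A^\st,B^\st)=\Lambda(M^*,(O,I))$ (the latter by Lemma~\ref{lem:dualIO}), obtaining
$$
{\psi}^{-1}_{|I'}=\al'{\varphi'}_{|O'}\al'
\qquad\text{and}\qquad
{\psi^\st}^{-1}_{|O'}=\al'{\varphi'^\st}_{|I'}\al',
$$
the second using ${I'}^\st=O'$ and ${O'}^\st=I'$. Now I combine these, using twice the elementary fact (already invoked in the proof of Lemma~\ref{lem:order-coincide}) that $\al'\gamma_{|S}\al'=(\al'\gamma\al')_{|\bar S}$ for any permutation $\gamma$ of $H'$ and any $S\in\{I',O'\}$, together with $(\gamma^{-1})_{|S}=(\gamma_{|S})^{-1}$. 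From the second equation, $(\al'{\varphi'^\st}\al')_{|O'}={\psi^\st}^{-1}_{|O'}$, hence $(\al'{\varphi'^\st}\al')^{-1}_{|O'}={\psi^\st}_{|O'}$. Substituting $\psi^\st=\al'\psi^{-1}\al'$ gives ${\psi^\st}_{|O'}=\al'({\psi}_{|I'})^{-1}\al'$, and the first equation rewrites as $({\psi}_{|I'})^{-1}=\al'{\varphi'}_{|O'}\al'$, so $\al'({\psi}_{|I'})^{-1}\al'={\varphi'}_{|O'}$. Chaining these equalities yields $(\al'{\varphi'^\st}\al')^{-1}_{|O'}={\varphi'}_{|O'}$, which is the claim.

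The only delicate point is the bookkeeping: one must consistently swap the roles of $I'$ and $O'$ when passing to the dual mobile, and correctly commute restrictions past conjugation by $\al'$ and past inversion. Beyond that there is no real obstacle — the lemma is purely formal once Proposition~\ref{prop:dualmobile} and Lemma~\ref{lem:order-coincide} are in hand.
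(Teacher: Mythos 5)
Your proof is correct and follows essentially the same route as the paper's: apply Proposition~\ref{prop:mobile} (equivalently Lemma~\ref{lem:order-coincide}) to both $(A,B)$ and $(A^\st,B^\st)$, use $\pi^\st=\pi^{-1}$ from Proposition~\ref{prop:dualmobile} to get $\psi^\st=\al'\psi^{-1}\al'$, and chain the resulting identities through conjugation by $\al'$ and restriction. The bookkeeping with ${I'}^\st=O'$, ${O'}^\st=I'$ is handled exactly as in the paper, so there is nothing to add.
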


For the example in Figure~\ref{fig:dual-tree2}, one gets ${\varphi'}_{|O'}=(o,a_1,\bb_2,a_2,\bb_1,a_3,a_4,\bb_3)$ and ${\varphi'}^\st_{|I'}=(b_3,\ba_4,\ba_3,b_1,\ba_2,b_2,\ba_1,i)$.

\begin{proof} 
By Proposition~\ref{prop:mobile}, the face-permutation $\psi=\pi\al$ of the mobile $B$ satisfies  ${\varphi'}_{|O'}=\al'\psi^{-1}_{|I'}\al'$. The same property applied to the mobile $B^\st$ gives ${\varphi'}^\st_{|I'}=\al'{\psi^\st}_{|O'}^{-1}\al'$, where $\psi^\st=\pi^\st\al'$ is the face-permutation of $B$. This gives 
$$(\al'{\varphi'}^\st\al')^{-1}_{|O'}=\al'({\varphi'}^\st)_{|I'}^{-1}\al'={\psi^\st}_{|O'}.$$
Moreover, by Proposition~\ref{prop:dualmobile}, $\pi^\st=\pi^{-1}$, so that $\psi^\st=\pi^{-1}\al'=\al'\psi^{-1}\al$. Hence,
$$(\al'{\varphi'}^\st\al')^{-1}_{|O'}= {\psi^\st}_{|O'}=(\al'\psi^{-1}\al)_{|O'}=\al'\psi^{-1}_{|I'}\al'={\varphi'}_{|O'}.$$
\end{proof}

\begin{lemma}\label{lem:dualtree}
The permutations $\varphi'$ and ${\tau'}^\st$ are related by ${\tau'}^\st=\varphi'{\varphi'}^{-1}_{|O'}$.
\end{lemma}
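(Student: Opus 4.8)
The plan is to mimic the proof of Lemma~\ref{lem:not-alone2}, which gave an analogous description relating $\tau'$ to $\varphi'$, but now running it ``in reverse'' for the dual tree. The key structural input is Lemma~\ref{lem:not-alone2} applied to the dual tree $A^\st$: it tells us that $\varphi'^\st = (i,o)\tau'^\st \al'$, and that $\varphi'^\st$ is obtained from the face-permutation $\varphi^\st=\tau^\st\al$ of $A^\st$ by inserting $o$ before $t^\st$ and isolating $i$. Equivalently, it gives $\tau'^\st = (i,o)\varphi'^\st \al'$ since $(i,o)$ is an involution and $\al'$ is an involution. So I would start by writing
$$
\tau'^\st = (i,o)\,\varphi'^\st\,\al'.
$$

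Next I would want to rewrite $\varphi'^\st$ in terms of $\varphi'$ and $\varphi'^{-1}_{|O'}$. The natural bridge is Lemma~\ref{lem:orderAA*}, which states ${\varphi'}_{|O'}=(\al'{\varphi'}^\st\al')^{-1}_{|O'}$, i.e. $\al'\varphi'^\st\al'$ and $\varphi'$ have inverse restrictions to $O'$. However, Lemma~\ref{lem:orderAA*} only controls the restriction to $O'$, not the full permutation $\varphi'^\st$, so I cannot simply substitute. Instead the argument should go half-edge by half-edge, as in Lemma~\ref{lem:order-coincide} and Lemma~\ref{lem:not-alone2}: take a half-edge $h\in H'$ and compute $\varphi'{\varphi'}^{-1}_{|O'}(h)$, distinguishing whether $h\in I'$ or $h\in O'$. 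The permutation $\varphi'^{-1}_{|O'}$ acts as the identity on $I'$ (with the usual abuse of notation), so for $h\in I'$ one has $\varphi'{\varphi'}^{-1}_{|O'}(h)=\varphi'(h)$; and I must check this equals $\tau'^\st(h)$. For $h\in O'$, the factor $\varphi'^{-1}_{|O'}(h)$ first moves $h$ back to the previous element of $O'$ along the $\varphi'$-orbit, and then $\varphi'$ moves forward; tracking this should land on $\tau'^\st(h)$, using that $\tau'^\st$ and $\varphi'^\st$ differ precisely by the insertion/deletion described in Lemma~\ref{lem:not-alone2} and that $I'$ and $O'$ alternate along $\varphi'^\st$-orbits in the right way. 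The ordering information needed to make ``previous element of $O'$'' match ``$\al'$ of the right white half-edge'' is exactly what Lemma~\ref{lem:orderAA*} (together with Proposition~\ref{prop:mobile}) supplies.

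Concretely, I would argue as follows. Fix $h\in O'$ and let $h'={\varphi'}^{-1}_{|O'}(h)\in O'$, so that $\varphi'^{j}(h')=h$ where $j>0$ is least with $\varphi'^{j}(h')\in O'$, and $\varphi'(h'),\dots,\varphi'^{j-1}(h')\in I'$. Then $\varphi'{\varphi'}^{-1}_{|O'}(h)=\varphi'(h')$. On the other hand, I want to identify $\tau'^\st(h)$. Using $\tau'^\st=(i,o)\varphi'^\st\al'$ and Lemma~\ref{lem:orderAA*} to relate the $O'$-order of $\varphi'$ with the $I'$-order of $\varphi'^\st$ via $\al'$, one finds that $\al'(h)$ is a white half-edge of $B^\st$ (i.e. in $I'$ from $B^\st$'s point of view, but $I'$ here is $A$'s ingoing set — I must be careful that in the dual picture the roles of $I'$ and $O'$ on the mobile side are swapped, matching Proposition~\ref{prop:dualmobile}), and chasing the cycle structure shows $\tau'^\st(h)=\varphi'(h')$ as well. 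For $h\in I'$, both sides reduce to $\varphi'(h)$: the left side because $\varphi'^{-1}_{|O'}$ fixes $I'$, the right side by the same cycle-chasing using that $\varphi'^\st$ restricted appropriately inserts $i$ in the right place. The main obstacle I anticipate is bookkeeping the swap of $I'$ and $O'$ between the primal and dual mobiles (Proposition~\ref{prop:dualmobile} says the dual mobile is $(H',\pi^{-1},\al')$ with root $o$, so its white set is $O'$) while simultaneously keeping straight that on the tree side $(I,O)$ is unchanged under duality by Lemma~\ref{lem:dualIO}; a sign/side error here would invert the wrong restriction. Once the case analysis is set up correctly, each case is a short computation of the type already done in Lemmas~\ref{lem:order-coincide} and~\ref{lem:not-alone2}, so the proof is essentially routine modulo this care.
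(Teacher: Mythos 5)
Your opening identity $\tau'^\st=(i,o)\varphi'^\st\al'$ is a tautology: $\varphi'^\st$ is \emph{defined} as $(i,o)\tau'^\st\al'$, so this step carries no information about $\tau'^\st$. The real content of your argument must therefore come from Lemma~\ref{lem:orderAA*}, and that is where the gap lies. That lemma determines only the restriction ${\varphi'}^\st_{|I'}$, i.e.\ the order in which the outgoing half-edges of $A^\st$ occur along its contour, and this data --- even together with $\al'$ and the knowledge that $(O,I)$ is the root-to-leaves orientation of $A^\st$ --- does \emph{not} determine the tree $A^\st$, hence does not determine $\tau'^\st$ or the full permutation $\varphi'^\st$. (The path with two edges and the star with two edges, both rooted and oriented from root to leaves, have the same restriction of the face-permutation to outgoing half-edges and the same edge-involution, yet different vertex-permutations.) So the ``cycle chasing'' you propose has nothing to chase: you never identify a source of information saying which half-edges lie in a common cycle of $\tau'^\st$, and that is precisely what the lemma asserts. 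You correctly flag that Lemma~\ref{lem:orderAA*} ``only controls the restriction,'' but the case analysis you sketch does not supply the missing data.

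The missing idea is to compute $\tau'^\st$ directly from its definition via the unfolding of the \emph{dual} oriented map $(M^*,(O,I))$: by~\eqref{eq:pitau} together with the identifications $\si'^\st=\phi'$ and $\pi_\circ^\st=\phi'_{|O'}$ established in the proof of Proposition~\ref{prop:dualmobile}, one gets ${\tau'}^\st=\si'^\st(\pi_\circ^\st)^{-1}=\phi'\,\phi'^{-1}_{|O'}$. This is the paper's first line, and after it the only remaining work is to replace the modified face-permutation $\phi'$ of $M$ by the modified face-permutation $\varphi'$ of the tree $A$. That replacement is a half-edge-by-half-edge computation of the kind you describe, and it succeeds because $\phi'=(i,o)\si'\al'$ and $\varphi'=(i,o)\tau'\al'$ coincide on $I'$ while the product $\phi'_{|O'}\phi'^{-1}$ only ever iterates $\phi'$ through half-edges of $I'$. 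Neither Lemma~\ref{lem:orderAA*} nor Lemma~\ref{lem:not-alone2} is needed once the definition of $\tau'^\st$ is unwound; without that unwinding, the proof cannot be completed along the lines you propose.
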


\begin{proof} 
By definition, ${\tau'}^\st=\si'^\st{\pi_\circ^\st}^{-1}=\phi'\phi'^{-1}_{|O'}$, where $\phi'=(i,o)\si'\al'$. We want to prove $\phi'{\phi'}^{-1}_{|O'}=\varphi'{\varphi'}^{-1}_{|O'}$, or equivalently, $\phi'_{|O'}\phi'^{-1}=\varphi'_{|O'}{\varphi'}^{-1}$ (by taking the inverse). Observe that the permutations $\phi'=(i,o)\si'\alpha'$ and $\varphi'=(i,o)\tau'\alpha'$ coincide on $I'$ (since $\si'$ and $\tau'$ coincide on $O'$). We now consider a half-edge $h$ in $H'$. Suppose first that $\phi'^{-1}(h)$ is in $I'$. 
In this case, ${\phi'}^{-1}(h)={\varphi'}^{-1}(h)$ (since $\phi'$ and $\varphi'$ coincide on $I'$), hence $\phi'_{|O'}\phi'^{-1}(h)={\phi'}^{-1}(h)={\varphi'}^{-1}(h)=\varphi'_{|O'}{\varphi'}^{-1}(h)$. Suppose now that  ${\phi'}^{-1}(h)$ is in~$O'$. Observe that ${\varphi'}^{-1}(h)$ is also in $O'$ (since $\phi'$ and $\varphi'$ coincide on $I'$). Moreover, by definition of restrictions, $\phi'_{|O'}\phi'^{-1}(h)={\phi'}^k(h)$, where $k\geq 0$ is such that ${\phi'}^{k}(h)\in O'$ and ${\phi'}^{j}(h)\in I'$ for all $0\leq j<k$. Since $\phi'$ and $\varphi'$ coincide on $I'$, we get ${\phi'}^j(h)={\varphi'}^j(h)$ for  $0\leq j\leq k$. Thus, $\phi'_{|O'}\phi'^{-1}(h)={\varphi'}^k(h)$ and where $k\geq 0$ is such that ${\varphi'}^{k}(h)\in O'$ and ${\varphi'}^{j}(h)\in I'$ for all $0\leq j<k$. Hence, by definition of restrictions, $\phi'_{|O'}\phi'^{-1}(h)=\varphi'_{|O'}{\varphi'}^{-1}(h)$.
\end{proof}

\smallskip

\begin{proof}[Proof of Proposition~\ref{prop:ups}]
We denote by $o_0=o,o_1,\ldots,o_n$ the half-edges in $O'$ in such a way that ${\varphi'}_{|O'}=(o,o_1,\ldots,o_n)$ and we denote $i_j=\al(o_j)$ for $j=0\ldots n$.  We denote by $v_0,v_1,\ldots,v_n$ the vertices of $A$ in counterclockwise order around  $A$. By definition (and because $(I,O)$ is the root-to-leaves orientation of $A$), this means that $v_j$ is incident to the ingoing half-edge $i_j$ for $j=1\ldots n$. Therefore, the height-code of $A$ is $c_0c_1\cdots c_n$, where $c_0=0$ and for $j=0\ldots n\!-\!1$, $c_{j+1}=c_j+1-\delta_j$ where $\delta_j$ is the number of half-edges in $I$ between the half-edge $o_j$ and $o_{j+1}$ in the face-permutation $\varphi$ (hence, also in the permutation $\varphi'$). Equivalently,  for $j=0\ldots n\!-\!1$, $\delta_j\equiv c_j+1-c_{j+1}$ is the number of half-edges in $I'$ in the cycle of the permutation $\varphi'{\varphi'}^{-1}_{|O'}$ containing $o_j$. We also denote $\delta_n=c_n$ and observe that this is the number of half-edges in $I'$ in the cycle of the permutation $\varphi'{\varphi'}^{-1}_{|O'}$ containing $o_n$.
 
We now consider degree-code $d_0d_1\cdots d_n$ of $A^\st$ and want to prove that $\delta_j=d_{n-j}$ for $j=0\ldots n$. Let $v_0^\st,v_1^\st,\ldots,v_n^\st$ be the vertices of $A^\st$ in counterclockwise order around  $A^\st$. By Lemma~\ref{lem:dualIO}, the root-to-leaves orientation of $A^\st$ is $(O,I)$ and by Lemma~\ref{lem:orderAA*},  ${\varphi'}^\st_{|I'}=(i,i_n,\ldots,i_2,i_1)$. Therefore, for $j=1\ldots n\!-\!1$ the vertex $v_{n-j}^\st$ of $A^\st$ is incident to the half-edge $o_j$. Thus, for $j=0\ldots n\!-\!1$,  the number of children $d_{n-j}$ of  $v_{n-j}^\st$ is the number of half-edges in $I'$ in the cycle of the vertex-permutation $\tau^\st$ containing $o_j$ (hence, also in the permutation $\tau'^\st$). For $j=n$ also, we observe that $d_{n-j}$ is the number of half-edges in $I'$ in the cycle of the permutation $\tau'^\st$ containing $o_j$. By Lemma~\ref{lem:dualtree}, $\tau'^\st=\varphi'{\varphi'}^{-1}_{|O'}$, hence  $\delta_j=d_{n-j}$ for $j=0\ldots n$. This concludes the proof of Proposition~\ref{prop:ups}.
\end{proof}

\titre{Acknowledgments.} We thank \'Eric Fusy, Jean-Fran\c cois Marckert, Gr\'egory Miermont, and Gilles Schaeffer for very stimulating discussions. 

\bibliographystyle{plain}
\bibliography{biblio-covered}

\end{document}